\newtheorem{thm}{Theorem}
\newtheorem{lemma}{Lemma}
\newtheorem{cor}{Corollary}
\theoremstyle{definition}
\newtheorem{algorithm}{Algorithm}
\newtheorem{prop}{Proposition}
\newtheorem{rem}{Remark}
\newtheorem{ass}{Assumption}
\providecommand{\NN}{\mathbb{N}}
\providecommand{\RR}{\mathbb{R}}
\providecommand{\I}{\mathrm{Id}}
\providecommand{\E}{\mathbb{E}}
\providecommand{\e}{\mathrm{e}}
\renewcommand{\P}{\mathbb{P}} 
\providecommand{\abs}[1]{\left\lvert#1\right\rvert}
\providecommand{\norm}[1]{\lVert#1\rVert}
\providecommand{\ind}[1]{{\mathbf{1}_{#1}}}
\renewcommand{\ind}[1]{{\mathbbm{1}_{#1}}}
\newcommand{\T}{{\prime}}
\newcommand{\dhp}{{\!\triangledown}}
\newcommand{\mbb}{{\!\vartriangle}}
\providecommand{\argmin}[0]{{\operatorname{arg min}}}
\providecommand{\trace}{{\operatorname{tr}}}
\newcommand{\dd}{{\,\mathrm d}}
\newcommand{\DD}{{\,\mathrm D}}
\newcommand{\EE}{\mathbb{E}}
\renewcommand{\tilde}{\widetilde}
\renewcommand{\theta}{\vartheta}
\renewcommand{\phi}{\varphi}
\newcommand{\eps}{\varepsilon}
\newcommand{\qv}[1]{\left<{#1}\right>}
\newcommand{\pin}[1]{{#1}^\star}
\newcommand{\Ell}{{\mathcal L}}
\newcommand{\rara}[1]{\renewcommand{\arraystretch}{#1}}
\newcommand{\si}{\sigma}
\renewcommand{\th}{\theta}
\newcommand{\la}{\lambda}
\newcommand{\ga}{\gamma}
\newcommand{\expec}[1]{\ensuremath{{\mathbb E}\mspace{-1mu}\left[#1\right]}}
\begin{document}

\begin{frontmatter}

\title{Guided proposals for simulating multi-dimensional diffusion bridges}
\runtitle{Guided proposals for multi-dimensional diffusion bridges}


\begin{aug}
\author{\fnms{Moritz} \snm{Schauer}, \ead[label=e1]{m.r.schauer@tudelft.nl}
\fnms{Frank} \snm{van der Meulen} \ead[label=e2]{f.h.vandermeulen@tudelft.nl}\\
\and
\fnms{Harry} \snm{van Zanten}\ead[label=e3]{hvzanten@uva.nl}}

\thankstext{}{Research supported by the Netherlands Organization for Scientific Research (NWO)}

\runauthor{Schauer, Van der Meulen and Van Zanten}

\affiliation{Delft University of Technology and University of Amsterdam}

\address{
Delft Institute of Applied Mathematics (DIAM) \\
Delft University of Technology\\
Mekelweg 4\\
2628 CD Delft\\
The Netherlands\\
\printead{e1}\\
\printead{e2}}

\address{Korteweg-de Vries Institute for Mathematics\\
University of Amsterdam\\
P.O. Box  94248\\
1090 GE Amsterdam\\
The Netherlands\\
\printead{e3}}

\end{aug}

\begin{abstract}
A Monte Carlo method for simulating a multi-dimensional diffusion process conditioned on hitting a fixed point at a 
fixed future time is developed. 
Proposals for such diffusion bridges are obtained by superimposing an additional guiding term to the drift of the process under consideration. The guiding term is derived via approximation of the target process by a simpler diffusion processes with known transition densities. Acceptance of a proposal can be determined by computing the likelihood ratio between
the proposal and the target bridge, which is derived in closed form. We show under general conditions that the likelihood ratio is well defined
 and show that a class of proposals with guiding term obtained from linear approximations fall under these conditions.

\medskip

\noindent
 \emph{ {Keywords:} Multidimensional diffusion bridge; change of measure; data augmentation; linear processes.}
 \end{abstract}

\begin{keyword}[class=MSC]
\kwd[Primary ]{60J60}
\kwd[; secondary ]{65C30}
\kwd{65C05}
\end{keyword}

\end{frontmatter}

\numberwithin{equation}{section}

\section{Introduction}

\subsection{Diffusion bridges}

Suppose $X$ is a $d$-dimensional diffusion with time dependent drift $b\colon \RR_+ \times \RR^d \rightarrow \RR^d$ and  
dispersion coefficient $\sigma\colon \RR_+ \times \RR^d \rightarrow \RR^{d\times d'}$ governed by the stochastic differential equation (SDE)
\begin{equation}\label{sde}
\dd X_t = b(t, X_t) \dd t + \sigma(t, X_t) \dd W_t, \quad X_0 = u,
\end{equation}
where $W$ is a standard ${d'}$-dimensional Brownian motion.
When the process $X$ is conditioned to hit a point $v \in \RR^d$ at time $T > 0$, 
the resulting process $X^\star$ on $[0,T]$ is called the {\em diffusion bridge from $u$ to $v$}. 
In this paper we consider the problem of simulating realizations of this bridge process. 
Since we are conditioning on an event of probability zero and in general 
 no closed form expression for the transition densities  of the  original process $X$ or the bridge $X^\star$ exist, 
 this is known to be a difficult problem. 

This  problem arises for instance when making statistical inference for diffusion models 
from discrete-time, low-frequency data. In that setting the fact that the transition densities 
are unavailable implies that the likelihood of the data is not accessible. 
A successful approach initiated by  \cite{RobertsStramer} is to  circumvent this problem by viewing the 
continuous segments between the observed data points  as missing data.
Computational algorithms can then be designed that  augment the discrete-time data by 
(repeatedly) simulating the diffusion bridges between the observed data points. 
This statistical application of simulation algorithms for diffusion bridges 
was our initial motivation for this work. The present paper however focusses  on 
the simulation problem as such and can have other  applications as well.

The 
simulation of diffusion bridges has received much attention over the past decade, see for instance 
the papers \cite{ElerianChibShephard},  \cite{Eraker},  \cite{RobertsStramer}, 
\cite{DurhamGallant}, \cite{Stuart}, \cite{BeskosRoberts}, \cite{BeskosPapaspiliopoulosRoberts}, \cite{Voss}, \cite{Fearnhead},
 \cite{PapaspiliopoulosRoberts}, \cite{Mykland}, \cite{Bladt}, \cite{Schoenmakers}  to mention just a few. Many of these papers employ accept-reject-type methods. 
 The common idea is that while sampling directly from the law $\P^\star$ of the bridge process $X^\star$ 
 is typically impossible, sampling from an equivalent law $\P^\circ$ of some proposal process
$X^\circ$ might in fact be feasible. If this proposal is accepted with an appropriately chosen probability, 
depending on the Radon-Nikodym derivative $(\rm{d}\P^\star/\rm{d}\P^\circ)(X^\circ)$, then 
either exact or approximate draws from the target distribution $\P^\star$ can be generated. 
Importance sampling and Metropolis-Hastings algorithms  are the prime examples of 
methods of this type. 

To be able to carry out these procedures in practice, simulating paths from the proposal process
has to be relatively easy and, up to a normalizing constant, an expression for the derivative 
$(\rm{d}\P^\star/\rm{d}\P^\circ)(X^\circ)$ has to be available that is easy to evaluate.
The speed of the procedures greatly depends on the acceptance probability, 
which in turn  depends on $(\rm{d}\P^\star/\rm{d}\P^\circ)(X^\circ)$. 
This can be influenced by working with a cleverly chosen proposal process $X^\circ$.
A naive choice might result in a proposal process that, although its law 
is equivalent to that of the target bridge $X^\star$, has sample paths that are with 
considerable probability  rather different from those of $X^\star$. This then results 
in  small ratios  $(\rm{d}\P^\star/\rm{d}\P^\circ)(X^\circ)$ with large probability, which in turn  
leads to small acceptance probabilities and hence to a slow procedure. 
It is therefore desirable to have proposals that are 
``close'' to the target in an appropriate sense. In this paper we 
construct such proposals for the multi-dimensional setting.

\subsection{Guided proposals}

We will consider so-called {\em guided proposals}, according to the terminology 
 suggested in  \cite{PapaspiliopoulosRoberts}. This means that our proposals 
 are realizations of a process $X^\circ$ that solves an SDE of the form 
 \eqref{sde} as well, but with a drift term that is adapted in order to 
 force the process $X^\circ$ to hit the point $v$ at time $T$.

An  early paper suggesting guided proposals is  \cite{Clark} (a paper that seems to have 
received little attention in the statistics community). \cite{Clark} considers the case $d = 1$ and $\sigma$ constant and advocates using 
proposals from the SDE $ \dd X^\circ_t =b(X^\circ_t)\dd t+ \tfrac{v-X^\circ_t}{T-t}\dd t + \si \dd W_t$. 
Note that here the guiding drift term that drives the process to $v$ at time $T$ is exactly the drift  term of a Brownian bridge.
In addition  the drift $b$ of the original process appears. The  idea 
is that this ensures that before time $T$, the 
proposal behaves similar to the original diffusion $X$.
 \cite{DelyonHu} have generalized the work of \cite{Clark} 
 in two important directions. Firstly, they allow
 non-constant  $\si$ using proposals $X^\dhp$ satisfying the SDE  
\begin{equation}\label{xcircp}\tag{$\dhp$}
 \dd X^\dhp_t =\left(b(t,X^\dhp_t)+ \frac{v-X^\dhp_t}{T-t}\right)\dd t + \si(t, X^\dhp_t) \dd W_t. 
\end{equation}
This considerably complicates proving that the laws of $X^\circ$ and the target bridge 
 $X^\star$ are absolutely continuous. Further, 
\cite{DelyonHu} consider the alternative proposals $X^\mbb$ satisfying the SDE 
 \begin{equation*}\label{xcircdp}\tag{$\mbb$}
  \dd X^\mbb_t = \frac{v-X^\mbb_t}{T-t}\dd t + \si(t, X^\mbb_t) \dd W_t. 
 \end{equation*}
where the original drift of $X$ is disregarded. This is a popular choice in practice especially with a discretization scheme known as the Modified Brownian Bridge. Both proposals have their individual drawbacks, see Section \ref{sec:comp}.

Another important difference is that they consider the multi-dimensional case.
With more degrees of freedom a proposal process that is not appropriately chosen has a much higher chance of  
not being similar to the  target process, leading to very low acceptance probabilities and 
hence slow simulation procedures. In higher dimensions the careful construction of the proposals is 
even more important  for obtaining  practically feasible procedures than in dimension one.

Our approach is inspired by the ideas in \cite{Clark} and \cite{DelyonHu}.  However, 
we propose to adjust the drift in a different way,  allowing more flexibility in constructing an appropriate guiding term.
This is particularly aimed at finding procedures with higher acceptance probabilities
in the multi-dimensional case. 
To explain the approach in more detail we recall that, under weak assumptions 
the target diffusion bridge $X^\star$ is characterized as the solution to the SDE
\begin{equation}\label{xstar} \tag{$\star$}
   \dd X^\star_t=   b^\star(t,X^\star_t) \dd t + \si(t, X^\star_t) \dd W_t,\qquad X^\star_0=u,\qquad t \in [0,T),
\end{equation}
where 
\begin{equation}\label{bstar} \tag{$\star\star$}	
b^\star(t,x) =b(t,x)+ a(t,x) \nabla_x \log p(t,x;T,v)
\end{equation}
and $a(t,x) = \sigma(t,x)\sigma^\T(t,x)$.
In the bridge SDE the term $ a(t,x) \nabla_x \log p(t,x;T,v)$  is added to the original drift  to direct $X^\star$ 
towards $v$ from the current position $X^\star_t = x$ in just the right manner.
Since equation \eqref{xstar} contains the unknown transition densities of the original process $X$ it cannot be employed 
directly for simulation. We propose to replace this unknown density  by one coming from  
an auxiliary  diffusion process with known transition densities. So the proposal process is going to be 
the solution  $X^\circ$ of the SDE
\begin{equation}\label{xcirc} \tag{$\circ$}
\dd X^\circ_t = b^\circ(t, X^\circ_t) \dd t + \sigma(t, X^\circ_t) \dd W_t, \quad X^\circ_0 = u,
\end{equation}
where
\begin{equation}\label{bcirc} \tag{$\circ\circ$}
			b^\circ(t, x) = b(t,x) + a(t, x) \nabla_x  \log  \tilde p(t,x;T,v)
\end{equation}
and   $\tilde p(s,x; t, v)$  is  the transition density of a diffusion process $\tilde{X}$  for which above expression 
is known in closed form. 
We note that in general our proposals are different from those defined in \cite{DelyonHu}. First of 
all  the diffusion  $a(t,x)$ of the original process appears  in the drift of the proposal process  $X^\circ$
and secondly we have additional freedom since we can  choose the process $\tilde{X}$.

The paper contains two main theoretical results. In the first we give conditions under which 
the process $X^\circ$ is indeed a valid proposal process in the sense that 
its distribution $\P^\circ$ (viewed as Borel measure on $C([0,T], \RR^d)$) is equivalent to 
the law $\P^\star$ of  the target process $X^\star$ and we derive 
an expression for the Radon-Nikodym derivative of the form
\[
\frac{\dd \P^\star}{\dd \P^\circ}({X^\circ}) \propto
 \exp\left( \int_0^T G(s,X^\circ_s) \dd s\right),
 \]
where the functional $G$ does not depend on unknown or inaccessible objects. 
In the second theorem we show that the  assumptions of the general result are fulfilled if 
in (\ref{bcirc}) we 
choose the transition density $\tilde p$ of a process $\tilde{X}$ from a large  class of 
{\em linear processes}. This is a  suitable class,  since linear processes have  tractable transition densities.

\subsection{Comparison of proposals}\label{sec:comp}
 Numerical experiments presented \cite{vdMeulenSchauer} show that our approach  can  indeed substantially increase acceptance rates in a Metropolis-Hastings sampler, 
 especially in the multi-dimensional setting. Already in a simple one-dimensional example however we can illustrate the advantage of our method. 

Consider the solution $X$ of the SDE, 
\[	 
\dd X_t = b(X_t) \dd t + \tfrac12\dd W_t, \quad X_0=u
\quad \text{with}\quad b(x) = \beta_1 - \beta_2 \sin(8x).
\]
The corresponding bridge $X^\star$ is obtained by  conditioning $X$ to hit the point $v \in \RR$ at time $T> 0$. We take $u=0, v=\tfrac \pi2$ and consider either the case  $\beta_1 = \beta_2 = 2$ or $\beta_1 = 2, \beta_2 = 0$.
We want to compare the three mentioned proposals \eqref{xcircp},\eqref{xcircdp} and  \eqref{xcirc} in these two settings. The drift $b$ satisfies the assumptions for applying the Exact Algorithm of \cite{BeskosRoberts}, but numerical experiments revealed the rejection probability is close to $1$ in this particular example. Besides, our main interest lies in comparing proposals that are suited for simulating general diffusion bridges in the multivariate case as well. 
A simple choice for the guided proposal \eqref{xcirc} is obtained by taking $\tilde X$ to be a scaled Brownian motion with constant drift $\theta$. This gives
 $b^\circ(s,x) = b(x) + \frac{v - x}{T-s} - \theta$ as
  the drift of the corresponding guided proposal. Here we can  choose $\theta$ freely. In fact, far more flexibility can be obtained by choosing  $\tilde{X}$ a linear process as in theorem \ref{thm:linearproc}. In particular, we could take $\th$ to depend on $t$, resulting in an infinite dimensional class of proposals. For illustration purposes, in this example we show that just taking a scaled Brownian motion with constant drift $\theta$ for $\tilde X$ is already very powerful. 
  
If $\beta_2 = 0$ the process $X$ is simply a Brownian motion with drift. It is folklore that the corresponding bridge $X^\star$ is then in fact the standard Brownian bridge from $u$ to $v$, 
independent of the constant $\beta_1$ (see for instance \cite{Esko}). So in that 
case both proposal \eqref{xcircdp} and proposal \eqref{xcirc} with $\theta = \beta_1$ coincide with the target bridge. However,   the drift $b^{\dhp}$  of the proposal \eqref{xcircp} is off by $|b^\star(s,x) - b^{\dhp}(s,x)| = |\beta_1|$ leading to bad acceptance rates if $\beta_1 \neq 0$, even for small values of $T$. This seems to be the prime reason that proposal \eqref{xcircp} is rarely used in practice. 

Now if $\beta_2 = 2$, both \eqref{xcircp} and \eqref{xcircdp} fail to capture the true dynamics of \eqref{xstar}. Roughly speaking, for  \eqref{xcircdp} the proposals fail to capture the multimodality of the marginal distributions of the true bridge, while proposals with  \eqref{xcircp}  arrive at values close to $v$ too early due to the mismatch between pulling term and drift. On the other hand the proposals \eqref{xcirc} can be quite close to the target bridge for good choices of $\theta$, see figure \ref{fig:sinexample2}. Two effects are in place: incorporating the true drift into the proposal results in the correct local behaviour of the proposal bridge (multimodality in this particular example).  Further, an appropriate choice of $\th$ reduces the mismatch between the drift part and guiding part of the proposal.
\begin{figure}[bth]
\centering{\includegraphics[width=.85\linewidth]{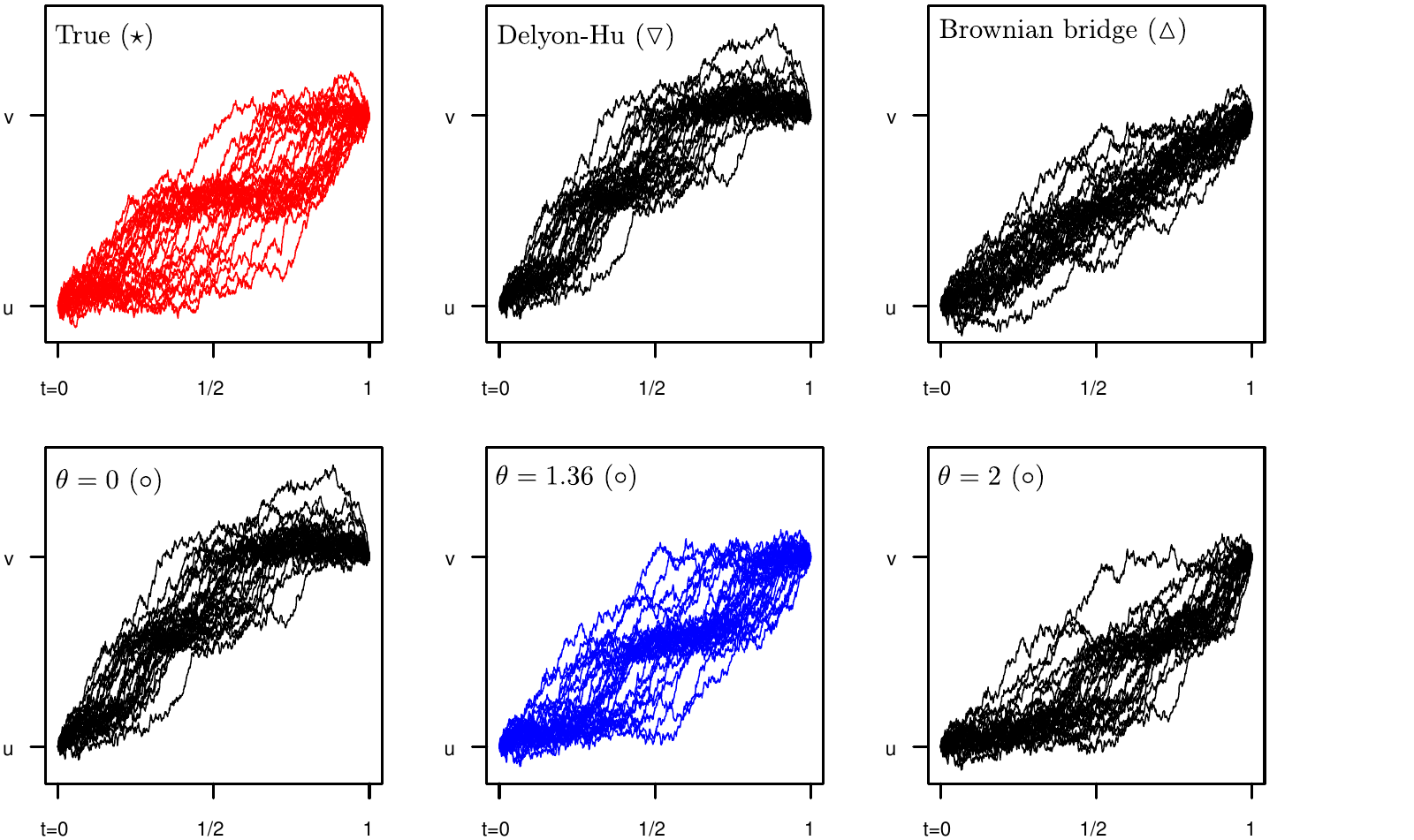}}
\caption{Samples from the true distribution of the bridge compared to different proposals for the example $b(x) = 2 - 2\sin(8x)$. Top row: True bridge, proposals with drift  $b^{\dhp}(t, x) = b(x) + \frac{v-x}{T-t}$ and $b^{\mbb}(t, x) =   \frac{v-x}{T-t}$. Bottom row: $b^\circ(s, x) = b(x) + \frac{v-x}{T-t} - \theta$ for different values of $\theta$. The top-middle figure and bottom-left figure coincide.}\label{fig:sinexample2}
\end{figure}
The additional freedom in \eqref{xcirc} by choice of $\th$ will be especially useful, if one can find good values for $\th$ in a systematic way. We now explain how this can be accomplished.

Let $P^\circ_\theta$ denote the  law of $X^\circ$.
One option to choose $\th$ in a systematic way is to take the information projection $P^\circ_{\theta_{\text{opt}}}$ defined by 
\[\theta_{\text{opt}} = \argmin_\th D_{\operatorname{KL}}( \P^\star  \| \P^\circ_\theta) \]
Here, the Kullback-Leibler divergence is given by 
\[D_{\operatorname{KL}}(\P^\star \| \P^\circ_\theta) = \int \log \left(\frac{\dd \P^\star}{\dd \P^\circ_\theta}\right)  \dd \P^\star.\]
This is a measure how much information is lost, when $P^\circ_\theta$ is used to approximate $P^\star$. This expression is not of much direct use, as it depends on the unknown measure $P^\star$. However, 
given a sample $X^\circ$ from $\P^\circ_{\th_0}$ using a reference parameter $\th_0$, the gradient of $D_{\operatorname{KL}}( \P^\star \| \P^\circ_\theta)$ can be approximated by 
\[ \nabla_\theta  \log \frac{\dd \P^\star}{\dd \P^\circ_\theta}(X^\circ)   \frac{\dd \P^\star}{\dd \P^\circ_{\theta_0}}(X^\circ).  \]
This in turn can be used in an  iterative stochastic gradient descent algorithm (details are given in the appendix).
The value $\th=1.36$ used in Figure \ref{fig:sinexample2} was obtained in this way. From the trace plot  of the gradient descent algorithm displayed in figure \ref{fig:sinexample} it appears the algorithm settles near the optimal value shown in the right-hand figure. 
\begin{figure}[bth]
\centering{\includegraphics[width=.45\linewidth]{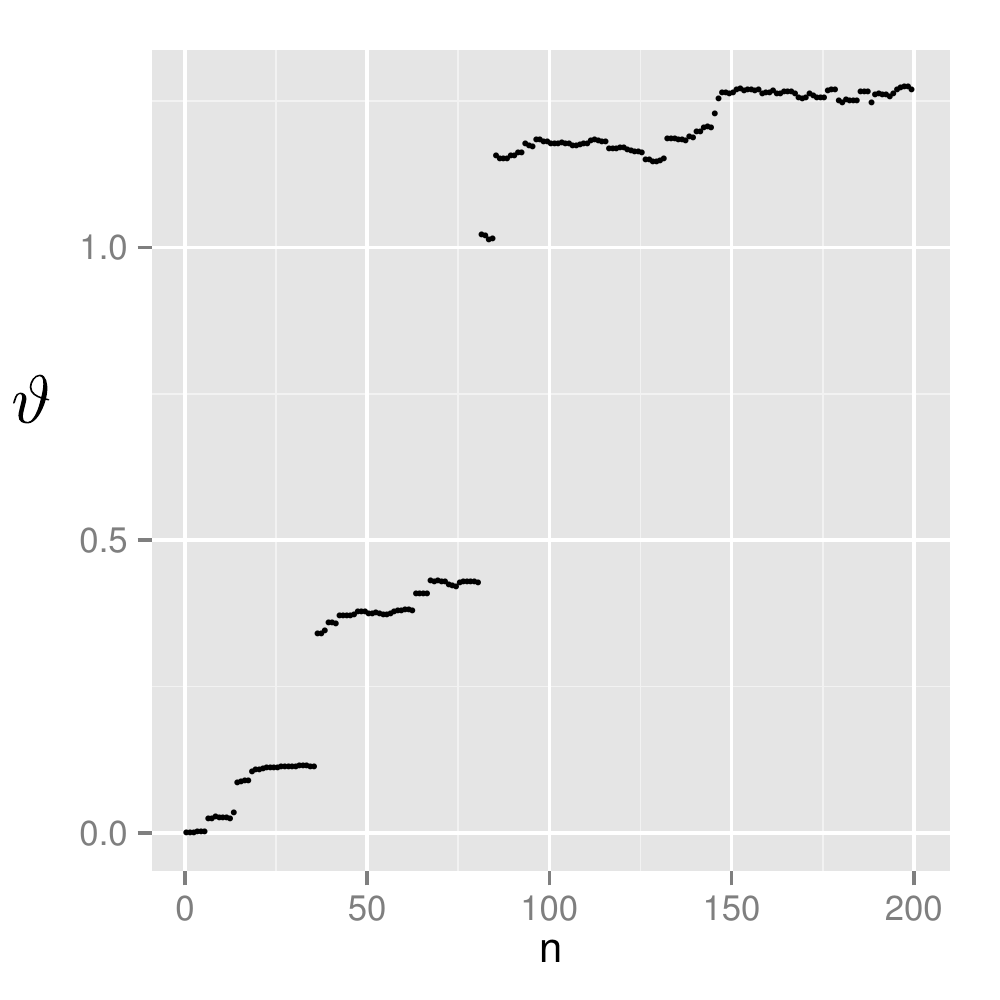}\includegraphics[width=.45\linewidth]{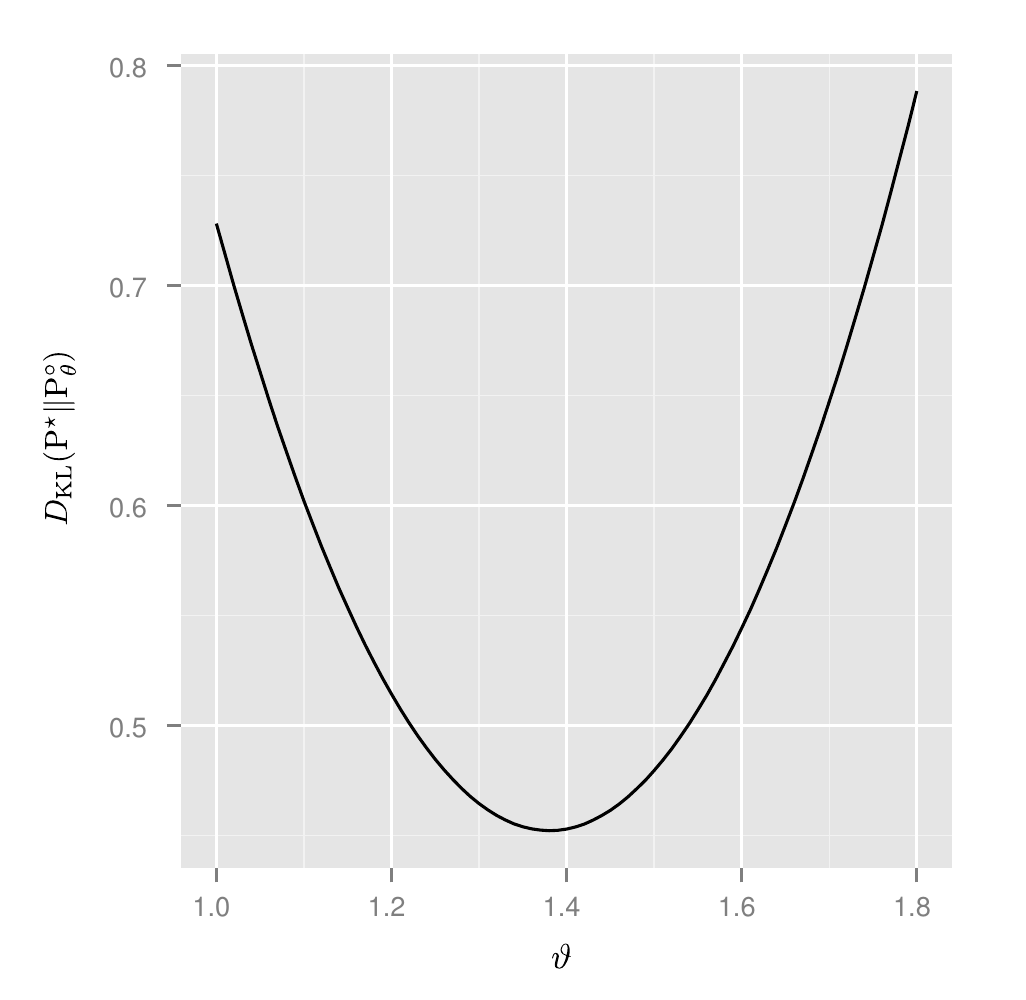}}
\caption{Left: trace plot of $\theta$ using the stochastic gradient descent algorithm. Right: $\th \mapsto D_{\operatorname{KL}}(\P^\star \| \P^\circ_\theta)$, estimated with $100000$ simulated bridges.}\label{fig:sinexample}
\end{figure}

\subsection{Contribution of this paper}
In this paper we propose a novel class of proposals for generating diffusion bridges that can be used in Markov Chain Monte Carlo and importance sampling algorithms. We stress that these are not special cases of the proposals from \cite{DelyonHu} (specified in equations \eqref{xcircp} and \eqref{xcircdp}). An advantage of this class is that the drift of the true diffusion process is taken into account while avoiding the drawbacks of proposals of the form \eqref{xcircp}. This is enabled by the increased flexibility for constructing a pulling term in the drift of the proposal.  A particular feature of our choice is that no It\=o-integral appears in the likelihood ratio between the true bridge and proposal process. Furthermore, the dispersion coefficient $\si$ does not need to be invertible. In a companion paper (\cite{vdMeulenSchauer}) we show how guided proposals can be used for Bayesian estimation of discretely observed diffusions.

\subsection{Organization}
The main results of the paper are presented in Section \ref{sec: main}. proofs are given in Sections \ref{sec:proof_theorem}--\ref{sec:proof_linearproc}.

\subsection{General notations and conventions}\label{sec:notation}

\subsubsection{Vector- and matrix norms}

The transpose of a matrix $A$ is denoted by $A^\T$.  The determinant and trace of a square matrix $A$ are denoted by $|A|$ and $\trace(A)$ respectively. 
For vectors, we will always use the Euclidean norm, which we denote  by $\|x\|$. For a $d\times d'$ matrix $A$, we denote its Frobenius norm by $\|A\|_F=(\sum_{i=1}^d \sum_{j=1}^{d'} A^2_{ij})^{1/2}$.
The spectral norm, the operator norm induced by the Euclidean norm will de denoted by $\|A\|$, so
\[ 	\|A\|= \sup\{ \|Ax\|,\: x \in \RR^{d'} \:\text{with}\: \|x\|=1\}. \]
Both norms are submultiplicative, $\|A x\| \le \|A\|_F \|x\|$ and  $\|A x\| \le \|A\| \|x\|$. The identity matrix will be denoted by $\I$.

\subsubsection{Derivatives}
For $f\colon \RR^{m} \to \RR^n$ we denote by $\DD f$ the $m\times n$-matrix with element $(i,j)$ given by 
$\DD_{ij} f(x)= ({\partial f_j}/{\partial x_i})(x)$.
If  $n=1$, then   $\DD f$ is the column vector containing all partial derivatives of $f$, that is $\nabla_x f$ from the first section. In this setting we write the $i$-th element of $\DD f$ by $\DD_i f(x)= ({\partial f}/{\partial x_i})(x)$ and  denote $\DD^2 f=\DD(\DD f)$ so that $\DD^2_{ij} f(x) =
{\partial^2 f(x)}/({\partial x_i\partial x_j})$.
If $x\in \RR^n$ and  $A \in \RR^{n\times n}$ does not depend on $x$, then $\DD (Ax)=A^\T$. Further, for $f\colon \RR^n \to \RR^n$ we have
\[ \DD(f(x)^\T A f(x))=  (\DD f(x))^\T (A+A^\T)  f(x). \]
Derivatives with respect to time are always denoted as $\partial / \partial t$.

\subsubsection{Inequalities}
We write $x \lesssim y$ to denote that there is a universal (deterministic) constant $C>0$ such that $x \le C y$.


\section{Main results}\label{sec: main}

\subsection{Setup}
We continue to use the notation of the introduction, 
so the process $X$ is the unconditioned process
defined as the solution to the SDE \eqref{sde}. We assume 
throughout that the 
functions $b$ and $\sigma$ are Lipschitz in both arguments, 
satisfy a linear growth condition in their second argument and that $\sigma$
is uniformly bounded. These conditions imply in particular that 
the SDE has a unique strong solution (e.g.\ \cite{Karatzas-Shreve}).
The auxiliary process $\tilde X$ whose transition densities are 
used in the proposal process is defined as the solution of 
an SDE like \eqref{sde} as well, but with drift $\tilde b$
instead of $b$ and  dispersion $\tilde\sigma$ instead of $\sigma$. The functions 
$\tilde b$ and $\tilde\sigma$ are assumed to satisfy the same Lipschitz,  linear 
growth and boundedness conditions as $b$ and $\sigma$. We write $a = \sigma\sigma^\T$ and 
$\tilde a = \tilde\sigma\tilde\sigma^\T$.

The processes $X$ and $\tilde X$ are assumed to have smooth
transition densities with respect to Lebesgue measure. 
More precisely, denoting the law of the process $X$ started in $x$ at time $s$ by $\P^{(s,x)}$, we assume that that for $0\le s<t$ and $y \in \RR^d$
\[	
\P^{(s,x)}(X_t \in \dd y)= p(s, x; t,y) \dd y
\]
and similarly for the process $\tilde X$, whose transition densities are denoted by $\tilde p$
instead of $p$. 
The infinitesimal generators of $X$ and $\tilde X$ 
are denoted by   $\Ell$ and $\tilde\Ell$, respectively, so that
\begin{equation}\label{infgen}
(\Ell f) (s, x)  = \sum_{i=1}^{d} b_i(s,x) \DD_i f(s,x) + \frac12\sum_{i,j=1}^{d}  a_{ij}(s,x) \DD^2_{ij} f(s,x),
\end{equation}
for  $f \in C^{1,2}(\RR \times \RR^d, \RR)$, and similarly for $\tilde \Ell$ (with $\tilde b$ and $\tilde a$). 
Under regularity conditions, which we assume to be fulfilled, we have that the transition densities  of $\tilde{X}$ satisfy 
{Kolmogorov's backward equation}:
\[ 
\frac{\partial}{\partial s} \tilde{p}(s,x; t, y)+ (\tilde\Ell \tilde{p})(s,x; t, y)=0 
\]
(here $\tilde\Ell$ acts on $s,x$). 
(See for instance \cite{Karatzas-Shreve}, p. 368, for sufficient regularity conditions.)

We fix a time horizon $T > 0$ and a point $v \in \RR^d$ such that for all $s \le T$ and $x \in \RR^d$ it holds that 
$p(s, x; T, v) > 0$ and $\tilde p(s, x; T, v) > 0$. The target bridge process $X^\star = (X^\star_t: t \in [0,T])$ is defined 
by conditioning the original process $X$ to hit the point $v$ at time $T$. 
The proposal process $X^\circ = (X^\circ_t: t \in [0,T])$ is defined as the solution 
of \eqref{xcirc}--\eqref{bcirc}. In the results ahead we will impose conditions on the transition densities 
$\tilde p$ of $\tilde X$ that imply that this SDE has a unique solution. 
All processes are assumed to be defined on the canonical path space and 
$(\mathcal F_t)$ is the corresponding canonical filtration. 

For easy reference, the following table 
briefly describes the various processes around.
\begin{center} \rara{1.2}
\begin{tabular}{|l|l|}
\hline
$X$ & original, unconditioned diffusion process\\
$X^\star$ & corresponding bridge, conditioned to hit $v$ at time $T$, defined through \eqref{xstar}\\
$X^\circ$ & proposal process defined through \eqref{xcirc}\\ 
$\tilde X$ & auxiliary process whose transition densities $\tilde p$ 
appear in the definition of $X^\circ$.\\
\hline
\end{tabular}
\end{center}
We denote the laws of $X$, $X^\star$ and $X^\circ$ viewed as measures on the space $C^d([0,t], \RR^d)$ of continuous functions from $[0,t]$ to $\RR^d$ equipped with Borel-$\sigma$-algebra by  $\P_t$, $\P^\star_t$ and $\P^\circ_t$ respectively. In case $t=T$ we drop the subscript $T$.

\subsection{Main results}

The end-time $T$ and the end-point $v$ of the conditioned diffusion will be fixed throughout. To emphasize the dependence of the transition density on the first two arguments and to shorten notation, we will often write
\[p(s,x) = p(s,x; T, v).\]
Motivated by the guiding term in the drift of $\pin X$ (see \eqref{bstar}), we further introduce the notations
\label{sec:main result}
\begin{center}\rara{1.2}
\begin{tabular}{|l|}
\hline
$R(s,x) = \log p(s,x), \quad
	 r(s,x) = \DD R(s,x), \quad  H(s,x) = -\DD^2 R(s,x).$\\
\hline
\end{tabular}
\end{center}
Here $\DD$ acts on $x$. Similarly the functions $\tilde R$, $\tilde r$ and $\tilde H$ 
are defined by starting with the transition densities $\tilde p$  in the place of $p$. 

The following proposition deals with the laws of the processes $X$, $X^\circ$ and $X^\star$ on the interval $[0,t]$ for 
$t < T$ (strict inequality is essential). Equivalence of these laws is clear from Girsanov's theorem. The 
proposition gives expressions for the corresponding Radon-Nikodym derivatives, which are 
derived using Kolmogorov's backward equation. The proof of this result can be found 
in Section \ref{sec:proof_theorem}.

\begin{prop}\label{prop:likeli}
Assume for all $x, y\in \RR^d$ and $t\in [0,T)$ 
\begin{align}
\label{eq:bound-rtilde}
\|\tilde r(t, x)\| \lesssim 1+ \frac{\norm{x-v}}{T-t}, \qquad 
\|\tilde r(t, y) - \tilde r(t,x)\| \lesssim \frac{\norm{y-x}}{T-t}.
\end{align}
Define the process $\psi$  by 
\begin{equation}
\label{eq:defpsi} \psi(t)=\exp\left(\int_0^t G(s,X^\circ_s) \dd s \right), \quad t < T, 
\end{equation}
where
\begin{align*} G(s,x) &= (b(s,x) - \tilde b(s,x))^\T \tilde r(s,x) \\ & \qquad -  \frac12 \trace\left(\left[a(s, x) - \tilde a(s, x)\right] \left[\tilde H(s,x)-\tilde{r}(s,x)\tilde{r}(s,x)^\T\right]\right).
\end{align*}
Then  for $t \in [0,T)$ the laws $\P_{t}$, $\P^\circ_{t}$ and $\P^\star_{ t}$ 
are equivalent and we have
\begin{align}\nonumber
\frac{\dd \P_{t}}{\dd \P^\circ_{t}}({X^\circ}) & =
 \frac{\tilde p(0,u; T, v)}{\tilde p(t, X^\circ_{t}; T, v)} \: \psi(t),\\[1ex]
\label{likelib}
\frac{\dd \P^\star_{t}}{\dd \P^\circ_{t}}({X^\circ}) & =
 \frac{\tilde p(0,u; T, v)}{p(0,u; T, v)}\frac{p(t, X^\circ_{t}; T, v)}{\tilde p(t, X^\circ_{t}; T, v)} \: \psi(t).
\end{align}
\end{prop}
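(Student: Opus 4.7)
The plan is to obtain the first identity by a Girsanov change of measure from $\P^\circ_t$ to $\P_t$, and then rewrite the resulting exponential martingale using Itô's formula applied to $\tilde R(s,X^\circ_s)=\log \tilde p(s,X^\circ_s;T,v)$, with Kolmogorov's backward equation for $\tilde p$ doing the bookkeeping that turns the (random) stochastic integral into the deterministic functional $G$ and a ratio of transition densities. The second identity then follows by composing with the classical Doob $h$-transform identity $\dd\P^\star_t/\dd\P_t(X)=p(t,X_t;T,v)/p(0,u;T,v)$, valid for $t<T$.

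First I would check that the SDE \eqref{xcirc} admits a unique strong solution on $[0,t]$ for every $t<T$: the Lipschitz-in-$x$ bound in \eqref{eq:bound-rtilde}, combined with the standing assumptions on $b$ and $a=\sigma\sigma^\T$, makes the drift $b^\circ(s,x)=b(s,x)+a(s,x)\tilde r(s,x)$ Lipschitz and of linear growth uniformly on $[0,t]\times\RR^d$ (the singularity sits at $T$, which is excluded). Next, I would appeal to Girsanov: since $a\tilde r=\sigma(\sigma^\T\tilde r)$, the drift shift is admissible even when $\sigma$ is not square-invertible, and
\[
\frac{\dd\P_t}{\dd\P^\circ_t}(X^\circ)=\exp\!\left(-\int_0^t \tilde r(s,X^\circ_s)^\T\sigma(s,X^\circ_s)\dd W_s-\tfrac12\int_0^t \tilde r(s,X^\circ_s)^\T a(s,X^\circ_s)\tilde r(s,X^\circ_s)\dd s\right),
\]
provided the exponential is a true martingale; the linear growth bound in \eqref{eq:bound-rtilde}, together with uniform boundedness of $a$, supplies Novikov's condition on $[0,t]$ for $t<T$ after a standard moment estimate on $X^\circ$.

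The heart of the argument is then an Itô application to $\tilde R(s,X^\circ_s)$. Using $\DD\tilde R=\tilde r$, $\DD^2\tilde R=-\tilde H$, and the identity $\tilde p^{-1}\DD^2_{ij}\tilde p=-\tilde H_{ij}+\tilde r_i\tilde r_j$, Kolmogorov's backward equation $\partial_s\tilde p+\tilde\Ell\tilde p=0$ rearranges to
\[
\partial_s\tilde R=-\tilde b^\T\tilde r+\tfrac12\trace(\tilde a\tilde H)-\tfrac12 \tilde r^\T\tilde a\tilde r.
\]
Plugging this together with $\dd X^\circ_s=(b+a\tilde r)\dd s+\sigma\dd W_s$ and $\dd\langle X^\circ\rangle_s=a\dd s$ into Itô's formula, the $\dd s$-terms collapse, after collecting, into exactly $G(s,X^\circ_s)+\tfrac12 \tilde r^\T a\tilde r$, yielding
\[
\log\frac{\tilde p(t,X^\circ_t;T,v)}{\tilde p(0,u;T,v)}=\int_0^t G(s,X^\circ_s)\dd s+\tfrac12\int_0^t\tilde r^\T a\tilde r\dd s+\int_0^t\tilde r^\T\sigma\dd W_s.
\]
Exponentiating and substituting into the Girsanov density above gives $\dd\P_t/\dd\P^\circ_t(X^\circ)=(\tilde p(0,u;T,v)/\tilde p(t,X^\circ_t;T,v))\,\psi(t)$, which is the first claim. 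The second follows immediately from $\dd\P^\star_t/\dd\P^\circ_t=(\dd\P^\star_t/\dd\P_t)\cdot(\dd\P_t/\dd\P^\circ_t)$ and the $h$-transform formula for $\dd\P^\star_t/\dd\P_t$.

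The main obstacle is the rigorous martingale check for the Girsanov exponential: even restricted to $[0,t]$ with $t<T$, the bound $\|\tilde r(s,x)\|\lesssim 1+\|x-v\|/(T-s)$ has a factor blowing up as $s\uparrow T$, and one must combine it with an a priori moment estimate for $\sup_{s\le t}\|X^\circ_s\|$ (obtained from Grönwall once existence is in hand) to verify Novikov on each compact subinterval; everything else is essentially algebraic manipulation enabled by Kolmogorov's backward equation.
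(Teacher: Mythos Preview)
Your proposal is correct and follows essentially the same route as the paper: Girsanov from $\P^\circ_t$ to $\P_t$, It\^o's formula applied to $\tilde R(s,X^\circ_s)$, and Kolmogorov's backward equation (which the paper packages as a separate lemma giving $\partial_s\tilde R+\tilde\Ell\tilde R=-\tfrac12\tilde r^\T\tilde a\tilde r$) to eliminate the stochastic integral; the second identity is then obtained exactly as you say via the $h$-transform formula $\dd\P^\star_t/\dd\P_t(X)=p(t,X_t)/p(0,u)$. One small remark on your ``main obstacle'': since $t<T$ is fixed, the factor $(T-s)^{-1}$ in \eqref{eq:bound-rtilde} is bounded by $(T-t)^{-1}$ on $[0,t]$, so the Lipschitz and linear-growth conditions on $b^\circ$ are uniform there and the martingale property of the Girsanov exponential is not delicate at this stage---the paper simply invokes Liptser--Shiryaev without further comment.
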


\bigskip

 Proposition \ref{prop:likeli} is not of much use for simulating diffusion bridges unless its statements can be shown to hold in the limit $t\uparrow T$ as well. One would like to argue that in fact we have equivalence 
 of measures on the whole interval $[0,T]$ and that 
\begin{equation}
\label{eq:goal}	\frac{ \dd\P^\star_{T}}{\dd\P^\circ_{T}}({X^\circ}) =
 \frac{\tilde p(0,u; T, v)}{p(0,u; T, v)}  \: \psi(T). 
\end{equation}
As $\psi(T)$ does not depend on $p$, samples from $X^\circ$ can then be used as proposals for 
$X^\star$ in a Metropolis-Hastings sampler, for instance. Numerical evaluation of $\psi(T)$ is somewhat simplified by the fact  that no  stochastic integral appears in its expression. 
To establish \eqref{eq:goal} we need to put  appropriate conditions on the processes  $X$ and $\tilde{X}$
that allow us to control the behaviour of the bridge processes $X^*$ and $X^\circ$ near time $T$.

\begin{ass}
\label{ass:tildeX}
For the auxiliary process $\tilde{X}$ we assume the following: 
\begin{itemize}
\item[(i)] For all bounded, continuous functions $f\colon [0,T]\times\RR^d \to \RR$ the transition densities  $\tilde p$ of  $\tilde X$ satisfy
\begin{equation}\label{weakcont}
\lim_{t \uparrow T} \int f(t, x) \tilde p(t, x; T, v) \dd x = f(T, v).
\end{equation}
\item[(ii)] For all $x, y\in \RR^d$ and $t\in [0,T)$, the functions $\tilde{r}$ and $\tilde{H}$ satisfy 
\begin{align*}\|\tilde r(t, x)\|& \lesssim 1+ \norm{x-v}(T-t)^{-1} \\ 
\|\tilde r(t, x) - \tilde r(t,y)\| &\lesssim {\norm{y-x}}{(T-t)^{-1}}\\
 \|\tilde{H}(t, x)\| &\lesssim    (T-t)^{-1} +   \norm{x-v}(T-t)^{-1}.\end{align*}
\item[(iii)]  There exist  constants $\tilde{\Lambda},  \tilde{C} > 0$ such that for $0 < s < T$, 
$$ \tilde{p}(s,x; T, v) \le 
\tilde{C} (T-s)^{-d/2} \exp\left(-\tilde{\Lambda}\frac{\|v-x\|^2}{T-s}\right)$$ uniformly in $x$. 
\end{itemize}
\end{ass}

Roughly speaking, Assumption \ref{ass:tildeX} requires that the process $\tilde X$, 
which we choose ourselves, is  
sufficiently nicely behaved near time $T$. 

\begin{ass}
\label{ass:relaronson-for-p}
For $M>1$ and $u\ge 0$ define $g_M(u)=\max(1/M, 1-Mu)$. 
There exist constants $\Lambda,  C>0$,  $M > 1$ and a function $\mu_t(s,x)\colon \{s,t\colon 0\le s \le t \le T\}\times \RR^d\to\RR^d$ 
with $\|\mu_t(s,x) - x\|<M(t-s)\|x\|$ and $\|\mu_t(s,x)\|^2 \ge g_M(t-s)  \|x\|^2$, so that for all $s<t\le T$ and $x, y\in\RR^d$,
 \[
 p(s,x; t, y) \le  C(t-s)^{-d/2} \exp\left(-\Lambda\frac{\|y - \mu_t(s,x) \|^2}{t-s}\right).
 \]
\end{ass}
Assumption \ref{ass:relaronson-for-p} refers to the generally unknown transition densities of $X$. In case the drift of $X$ is bounded, assumption \ref{ass:relaronson-for-p} is implied by the stronger Aronson's inequality (cf.\ \cite{Aronson}). However, assumption \ref{ass:relaronson-for-p} also holds for example for linear processes which in general have unbounded drift.

\begin{ass}\label{ass:Xcirc}
There exist an $\eps \in (0,1/6)$  and  an a.s.\ finite random variable $M$ such that for all $t\in [0,T]$,
it a.s.\ holds that 
\[ \|X^\circ_t-v\| \le M (T-t)^{1/2-\eps}. \]
\end{ass}

This third assumption requires that the proposal process $X^\circ$ does not only 
converge to $v$ as $t \uparrow T$, as it obviously should, but that it does so 
at an appropriate speed. 
A requirement of this kind can not be essentially avoided, since 
in general two bridges can only be equivalent if 
they are pulled to the endpoint with the same force. 
Theorem \ref{thm:linearproc} below asserts that this assumption holds in case 
$\tilde X$ is a linear process, provided its diffusion coefficient 
coincides with that of the process $X$ at the final time $T$.

We can now state the main  results of the paper.

\begin{thm}\label{thm:criterion}
Suppose that Assumptions  \ref{ass:tildeX}, \ref{ass:relaronson-for-p} and \ref{ass:Xcirc} hold
and that $\tilde{a}(T,v)=a(T,v)$. Then the laws of the bridges $X^\star$ and $X^\circ$ 
are equivalent on $[0,T]$ and \eqref{eq:goal} holds, with $\psi$ as in Proposition 
\ref{prop:likeli}.
\end{thm}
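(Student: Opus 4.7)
The plan is to start from Proposition~\ref{prop:likeli}, which already gives, for each $t<T$, equivalence of $\P^\star_t$ and $\P^\circ_t$ together with the explicit martingale
\[
M_t:=\frac{\dd\P^\star_t}{\dd\P^\circ_t}(X^\circ)=\frac{\tilde p(0,u;T,v)}{p(0,u;T,v)}\,\frac{p(t,X^\circ_t;T,v)}{\tilde p(t,X^\circ_t;T,v)}\,\psi(t),
\]
and then push $t\uparrow T$. Three ingredients have to be established: (a) $\psi(T)$ is $\P^\circ$-a.s.\ finite with $\psi(t)\to\psi(T)$; (b) the density quotient $p(t,X^\circ_t;T,v)/\tilde p(t,X^\circ_t;T,v)\to 1$; and (c) the a.s.\ limit $L:=(\tilde p(0,u;T,v)/p(0,u;T,v))\psi(T)$ of the $\P^\circ$-martingale $M_t$ satisfies $\EE^\circ L=1$, so that by Scheff\'e $M_t\to L$ in $L^1(\P^\circ)$ and $L$ is the Radon--Nikodym derivative on $[0,T]$.

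For (a) I would plug the bound $\|X^\circ_s-v\|\le M(T-s)^{1/2-\eps}$ from Assumption~\ref{ass:Xcirc} into Assumption~\ref{ass:tildeX}(ii), obtaining $\|\tilde r(s,X^\circ_s)\|\lesssim(T-s)^{-1/2-\eps}$ and $\|\tilde H(s,X^\circ_s)\|\lesssim(T-s)^{-1}$. Since $b-\tilde b$ is Lipschitz and bounded on compacts, the first term of $G$ is of order $(T-s)^{-1/2-\eps}$. The hypothesis $a(T,v)=\tilde a(T,v)$, combined with joint Lipschitz continuity, gives $\|a(s,X^\circ_s)-\tilde a(s,X^\circ_s)\|\lesssim(T-s)+\|X^\circ_s-v\|\lesssim(T-s)^{1/2-\eps}$, so the trace against $\tilde H$ contributes $(T-s)^{-1/2-\eps}$ while the trace against $\tilde r\tilde r^\T$ contributes $(T-s)^{-1/2-3\eps}$. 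The last exponent exceeds $-1$ iff $\eps<1/6$, which is exactly the range allowed in Assumption~\ref{ass:Xcirc}; hence $\int_0^T|G(s,X^\circ_s)|\,\dd s<\infty$ $\P^\circ$-a.s.\ and $\psi(t)\to\psi(T)$ follows by dominated convergence.

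Part (b) is the technical heart. The underlying heuristic is that both $p(t,\cdot;T,v)$ and $\tilde p(t,\cdot;T,v)$ are approximately Gaussian near $(T,v)$ with leading covariance $a(T,v)(T-t)=\tilde a(T,v)(T-t)$, and along $X^\circ$ the quadratic exponent $\|X^\circ_t-v\|^2/(T-t)$ is at most $M^2(T-t)^{-2\eps}$, so the Gaussian prefactors match and the leading exponents cancel on the ratio. To convert this heuristic into a rigorous estimate without a full heat-kernel expansion I plan to pair the Aronson-type upper bound of Assumption~\ref{ass:relaronson-for-p} with the matching upper bound of Assumption~\ref{ass:tildeX}(iii), to extract corresponding lower bounds by running Proposition~\ref{prop:likeli} in reverse on a vanishing subinterval $[t,t']$ (where $\psi(t')/\psi(t)\to 1$), and to use the weak-convergence statement Assumption~\ref{ass:tildeX}(i) to fix the normalisations. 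I expect this to be the main obstacle: the standing hypotheses provide only one-sided heat-kernel bounds, and extracting a two-sided ratio asymptotic from them requires delicate use of the covariance match $\tilde a(T,v)=a(T,v)$.

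Once (a) and (b) hold, $M_t\to L$ $\P^\circ$-a.s., and Fatou gives $\EE^\circ L\le 1$. For the reverse inequality I will test against bounded continuous functionals $F$ of the path restricted to $[0,s]$ with $s<T$: Proposition~\ref{prop:likeli} yields $\EE^\star[F(X^\star|_{[0,s]})]=\EE^\circ[F(X^\circ|_{[0,s]})M_s]$, and the martingale identity $\EE^\circ[F(X^\circ|_{[0,s]})M_t]=\EE^\circ[F(X^\circ|_{[0,s]})M_s]$ for $s\le t<T$ lets me send $t\uparrow T$, using $X^\circ_t\to v$ from Assumption~\ref{ass:Xcirc} and weak convergence of $X^\star_t$ to $v$ from Assumption~\ref{ass:tildeX}(i), and then $s\uparrow T$ by continuity of $F$ on $C([0,T],\RR^d)$. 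This identifies $L$ as $\dd\P^\star/\dd\P^\circ$ on $[0,T]$ and yields equivalence, since $L>0$ $\P^\circ$-a.s.
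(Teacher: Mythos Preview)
Your part (a) is fine and matches Lemma~\ref{lem:calcbounds} and Corollary~\ref{cor:psibound}; the integrability $\int_0^T|G(s,X^\circ_s)|\,\dd s<\infty$ and $\psi(t)\to\psi(T)$ a.s.\ follow exactly as you describe.

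Part (b), however, is a genuine gap, and the paper does \emph{not} prove it. Showing $p(t,X^\circ_t;T,v)/\tilde p(t,X^\circ_t;T,v)\to 1$ a.s.\ would require matching two-sided short-time asymptotics for both densities, but Assumptions~\ref{ass:tildeX}(iii) and~\ref{ass:relaronson-for-p} provide only Gaussian-type \emph{upper} bounds. Your proposal to ``extract lower bounds by running Proposition~\ref{prop:likeli} in reverse on $[t,t']$'' does not work: that proposition compares path laws, not pointwise density values, and gives no handle on $p(t,x;T,v)$ from below for fixed $x$. You rightly flag this as the main obstacle; in fact it is not resolvable with the available hypotheses. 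Without (b) you cannot identify the a.s.\ martingale limit $M_\infty$ with $L=\bar p\,\psi(T)$, so neither the Fatou step nor your part (c) goes through as written.

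The paper sidesteps (b) entirely by working in expectation and changing measure to the unconditioned process $X$ via $\dd\P^\star_t/\dd\P_t(X)=p(t,X_t)/p(0,u)$, which converts the intractable ratio into
\[
\EE\Big[\frac{\tilde p(t,X^\star_t)}{p(t,X^\star_t)}\Big]=\frac{1}{p(0,u)}\int p(0,u;t,z)\,\tilde p(t,z;T,v)\,\dd z,
\]
and this integral is controlled using only the one-sided bounds of Assumptions~\ref{ass:tildeX}(iii), \ref{ass:relaronson-for-p} together with the weak convergence of Assumption~\ref{ass:tildeX}(i) (Lemmas~\ref{lem:ftsxbound} and~\ref{lem:tildepoverp}). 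The equality $\EE[\bar p\,\psi(T)]=1$ is then obtained by localisation: with stopping times $\sigma_m^\circ=T\wedge\inf\{t:\|X^\circ_t-v\|\ge m(T-t)^{1/2-\eps}\}$ one has a deterministic bound $\psi\le e^{Km^3}$ on $\{\sigma_m^\circ=T\}$, giving the upper bound by dominated convergence; the lower bound uses the analogous stopping time under $\P^\star$ and a Chapman--Kolmogorov estimate (Lemma~\ref{lem:expec_greater_1}). Scheff\'e then upgrades $\psi(t)\to\psi(T)$ to $L^1$, and the Radon--Nikodym identification follows from one more appeal to Lemma~\ref{lem:tildepoverp}. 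No pointwise ratio limit is ever needed.
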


We complement this general theorem with a result that asserts, 
as already mentioned,  
that Assumptions \ref{ass:tildeX} and \ref{ass:Xcirc} 
hold for a class of processes $\tilde X$ given by linear SDEs.

\begin{thm}\label{thm:linearproc}
Assume $\tilde X$ is a linear process with dynamics governed by the stochastic differential equation
\begin{align}\label{linsdehomog}
		\dd  \tilde X_t = \tilde B(t) \tilde X_t\dd t  + \tilde \beta(t)  \dd t +  \tilde \sigma(t)  \dd  W_t,
\end{align}
for non-random matrix and vector functions $\tilde B$, $\tilde\beta$ and $\tilde\si$. 
\begin{itemize}
\item[(i)] If 
$\tilde B$ and $\tilde\beta$ are  continuously differentiable on $[0,T]$,  
$\tilde\si$ is Lipschitz on $[0,T]$ and 
there exists an $\eta>0$ such that for all $s\in [0,T]$ and all  $y\in \RR^d$,
\[	
y^\T \tilde{a}(s) y \ge \eta \|y\|^2,
\]
then $\tilde{X}$ satisfies Assumption  \ref{ass:tildeX}. 
\item[(ii)] Suppose moreover that 
 $\tilde a(T)=a(T,v)$, that 
there exists an $\eps >0$ such that for all $s \in [0,T]$, $x\in \RR^d$ and $y \in \RR^d$ 
\begin{equation}
\label{eq:ellipt}
 	 y^\T a(s,x) y \ge \eps  \|y\|^2,
\end{equation}
and that 
$b$ is of the form
$b(s,x) = B(s,x)x + \beta(s,x)$, 
where $B$ is a bounded  matrix-valued function and $\beta$ is a bounded vector-valued function.
Then there exists an a.s.\ finite random variable $M$ such that, a.s.,  
\[ 
\|X^\circ_t-v\| \le M \sqrt{(T-t)\log \log \left(\frac1{T-t}+e\right)}
\]
for all $t \in [0,T]$. In particular, Assumption \ref{ass:Xcirc} holds for any $\eps > 0$. 
\end{itemize}
\end{thm}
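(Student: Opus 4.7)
Part (i) would be carried out by explicit Gaussian computation. Since $\tilde X$ is linear, the law of $\tilde X_T$ given $\tilde X_s = x$ is normal with mean
\begin{equation*}
\mu(s,x) = \Phi(T,s) x + \int_s^T \Phi(T,u)\tilde\beta(u)\dd u
\end{equation*}
and covariance $\Sigma(s) = \int_s^T \Phi(T,u)\tilde a(u)\Phi(T,u)^\T \dd u$, where $\Phi(T,s)$ is the fundamental matrix of $\dot X = \tilde B(t) X$. Continuity of $\tilde B$ gives $\Phi(T,s) = I + O(T-s)$ and hence $\|\mu(s,x)-x\| \lesssim (T-s)(1+\|x\|)$, while uniform ellipticity of $\tilde a$ combined with boundedness of $\Phi$ yields the two-sided bound $c_1(T-s) I \preceq \Sigma(s) \preceq c_2(T-s) I$. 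From the closed form $\tilde r(s,x) = \Phi(T,s)\Sigma(s)^{-1}(v-\mu(s,x))$ and $\tilde H(s,x) = \Phi(T,s)\Sigma(s)^{-1}\Phi(T,s)^\T$ the norm bounds in (ii) follow directly (note that $\tilde H$ is independent of $x$ in the linear case). The Gaussian upper bound (iii) is obtained from the reverse inequality $\|v-\mu(s,x)\|^2 \ge \tfrac12\|v-x\|^2 - C(T-s)^2(1+\|x-v\|^2)$, which for $T-s$ small can be absorbed into the leading Gaussian factor. The weak-convergence statement (i) follows from the substitution $y = \Sigma(s)^{-1/2}(v-\mu(s,x))$ plus dominated convergence, since the argument of $f$ converges to $(T,v)$ while the transformed measure converges to the standard Gaussian.

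For part (ii) the plan is to isolate the mean-reverting part of the drift of $X^\circ$ and apply the classical Brownian-bridge integrating-factor trick. Using the explicit form of $\tilde r$, the matching condition $\tilde a(T)=a(T,v)$ and Lipschitz continuity of $a$, a Taylor expansion around $(T,v)$ gives
\begin{equation*}
a(t,x)\tilde r(t,x) = \frac{v-x}{T-t} + O(1+\|v-x\|) + O\!\left(\frac{\|v-x\|^2}{T-t}\right);
\end{equation*}
the assumption $b(s,x) = B(s,x)x + \beta(s,x)$ with bounded $B,\beta$ adds a further $O(1+\|v-x\|)$ term. Setting $Y_t := X^\circ_t - v$ this yields
\begin{equation*}
\dd Y_t = -\frac{Y_t}{T-t}\dd t + R_t\dd t + \sigma(t,X^\circ_t)\dd W_t,\qquad \|R_t\|\lesssim 1 + \|Y_t\| + \frac{\|Y_t\|^2}{T-t}.
\end{equation*}
Multiplying by $(T-t)^{-1}$, using It\^o and integrating gives the explicit representation
\begin{equation*}
Y_t = \frac{T-t}{T}(u-v) + (T-t)\int_0^t \frac{R_s}{T-s}\dd s + (T-t)\int_0^t \frac{\sigma(s,X^\circ_s)}{T-s}\dd W_s.
\end{equation*}
By Dambis--Dubins--Schwarz the stochastic integral is a time-changed Brownian motion; its quadratic-variation matrix is bounded above by $O((T-t)^{-1})$ (from ellipticity and boundedness of $a$), so the law of the iterated logarithm gives a.s.\ $\|\int_0^t (T-s)^{-1}\sigma(s,X^\circ_s)\dd W_s\|\le C(T-t)^{-1/2}\sqrt{\log\log(1/(T-t))}$, and multiplying by $(T-t)$ produces exactly the claimed rate.

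The main obstacle will be closing the bootstrap on the drift term. The contribution $(T-t)\int_0^t (T-s)^{-2}\|Y_s\|^2\dd s$ is borderline: under the conjectured LIL rate it is of order $(T-t)\log(1/(T-t))\log\log(1/(T-t))$, strictly smaller than $\sqrt{(T-t)\log\log(1/(T-t))}$, but the bound one needs in order to insert is the very bound one is trying to prove. I would therefore stage the argument: first obtain a coarse polynomial decay $\|Y_t\| = o((T-t)^{1/3})$ via a stopping-time comparison with an explicit super-solution of the time-inhomogeneous SDE (or from It\^o applied to $\|Y_t\|^2$ and a Gronwall estimate on $(T-t)^{-\alpha}\E\|Y_t\|^2$), feed this back into the integral representation to upgrade the exponent past $1/2 - \eps$, and only at the last stage sharpen to the LIL bound by inserting the nearly-sharp bound into the now-integrable remainder. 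A secondary technical point is the multi-dimensional LIL, which is handled componentwise in an orthonormal frame that diagonalises the bounded quadratic-variation process.
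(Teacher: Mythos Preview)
Your sketch for part (i) matches the paper's proof: both exploit the explicit Gaussian transition density of the linear process, compute $\tilde r$ and $\tilde H$ in terms of the fundamental matrix $\Phi$ and the covariance $K(s)$, and read off the required bounds from the two–sided estimate $c(T-s)I \preceq K(s) \preceq C(T-s)I$.

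For part (ii), your overall architecture (integral representation, Dambis--Dubins--Schwarz plus the LIL for the martingale, and a bootstrap for the drift) is also the paper's. The gap is precisely where you locate it: the preliminary polynomial bound. Your SDE for $Y_t = X^\circ_t - v$ isolates the deterministic factor $-Y_t/(T-t)$ and pushes everything $X^\circ$--dependent into $R_t$, which then carries the quadratic term $\|Y\|^2/(T-t)$. Neither of the two devices you suggest closes cleanly: in the It\^o/$L^2$ route the relevant drift contribution is $-2Y^{\T} a(t,X^\circ_t)\tilde H(t)Y$, and since $a\tilde H$ is not symmetric the quadratic form $Y^{\T} a\tilde H Y$ is \emph{not} bounded below by $c\|Y\|^2/(T-t)$ in general; in the stopping–time route the barrier $m(T-t)^{\alpha}$ produces a quadratic contribution $Cm^2(T-t)^{2\alpha}$, and the requirement $Cm(T-t)^{\alpha} < 1$ is in tension with taking $m$ large enough to cover the compact part of $[0,T)$.

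The paper circumvents this by a change of variable that symmetrises the problem. Instead of $Y$, one works with $\tilde Z(s,x) = w\,\tilde r(s,x)$ where $w = a(T,v)^{1/2}$. An It\^o computation gives a \emph{linear} SDE $\dd \tilde Z = -w\tilde H\sigma\,\dd W + A(s,X^\circ_s)\tilde Z\,\dd s + \Upsilon_2\,\dd s$ with random matrix coefficient $A$, whose leading part is $\Delta_1(s,x) = (T-s)^{-1}\bigl(I - w^{-1}a(s,x)w^{-1}\bigr)$. Because $w^{-1}aw^{-1}$ is symmetric and, by uniform ellipticity of $a$, bounded below by $\eps_0 I$, one gets the one–sided estimate $y^{\T}\Delta_1 y \le (1-\eps_0)\|y\|^2/(T-s)$. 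A Gronwall argument on the (random) fundamental matrix $\Psi$ then yields $\|\Psi(t)\Psi(s)^{-1}\| \lesssim \bigl((T-s)/(T-t)\bigr)^{1-\eps_0}$, and the variation–of–constants formula gives the coarse pathwise bound $\|\tilde Z(s,X^\circ_s)\| \lesssim (T-s)^{\eps_0 - 1}$ (equivalently $\|Y_s\| \lesssim (T-s)^{\eps_0}$) with no bootstrap at all. Only after this is established does the paper revert to an inequality of the form $\rho(t) \le Nf(t) + C\int_0^t(\rho+\rho^2)\,\dd s$ and apply a Gronwall--Bellman lemma with $h = C(1+\rho)$, now integrable thanks to the preliminary bound. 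In short: the missing idea is to track $w\tilde r$ rather than $X^\circ - v$, so that the first stage is genuinely linear and the ellipticity of $a$ can be converted into a strict contraction exponent.
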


The proofs of Theorems \ref{thm:criterion} and \ref{thm:linearproc} can be found
in Sections \ref{sec:proof-thm-criterion}--\ref{sec:proof_linearproc}.

\begin{rem}
Extending absolute continuity of  $\pin{X}$ and $X^\circ$ on $[0,T-\eps]$ ($\eps>0$) to  absolute continuity on $[0,T]$ is a subtle issue.
This  can already be seen from a very simple example in the one-dimensional case. Suppose $d=d'=1$,  $v=0$, $b\equiv 0$ and $\si(t,x)\equiv 1$.  That is, $\pin X$ is the law of a  Brownian bridge from $0$ at time $0$ to $0$ at time $T$ satisfying the stochastic differential equation
\[ \dd \pin{X}_t= -\frac{\pin{X}_t}{T-t} \dd t +\dd W_t. \]
Suppose we take $\tilde{X}_t=\tilde{\sigma} \dd W_t$, so that $X^\circ$
satisfies  the stochastic differential equation
\[ \dd X^\circ_t= -\frac1{\tilde{\sigma}^2} \frac{X^\circ_t}{T-t} \dd t +  \dd W_t. \]
It is a trivial fact that $X^\circ$ and $X^\star$ are absolutely continuous on $[0,T]$ if $\tilde\sigma=1$ (this also follows from theorem \ref{thm:linearproc}). It is natural to wonder whether this condition is also necessary. The answer to this question is yes, as we now argue.  Lemma 6.5 in  \cite{HidaHitsuda} gives a general result on absolute continuity of Gaussian measures. From this result it follows that $X^\circ$ and $X^\star$ are absolutely continuous on $[0,T]$ if and only if for the symmetrized Kullback-Leibler divergences
\[ d_t =\expec{ \log \frac{\dd \P^\star_t}{\dd \P^\circ_t}({X^\star}) }  +\expec{\log \frac{\dd \P^\circ_t}{\dd \P^\star_t}({X^\circ})}  \]
it holds that $\sup_{t\in [0,T)} d_t <\infty$. 
We consider the second term. Denoting  $\alpha = 1/\tilde \sigma^2$, Girsanov's theorem gives 
\begin{align*}
  \log \frac{\dd \P^\circ_t}{\dd \P^\star_t}({X^\circ})  =
   \int_0^t (1-\alpha) \frac{X^\circ_s}{T-s}  \dd W_s
    + \frac12  \int_0^t  ( \alpha-1)^2 \left(\frac{X^\circ_s}{T-s}\right)^2 \dd s
\end{align*}

By It\=o's formula $\frac{X^\circ_t}{T-t} = (1-\alpha) \int_0^t \frac{ X^\circ_s}{(T-s)^{2}} \dd s
+   \int_0^t  \frac{-1}{T-s}  \dd W_s.$

This is a  linear equation with solution 
\[\textstyle\frac{X^\circ_t}{T-t}
 = -\left(T-t\right)^{-1+\alpha} \int_0^t \left(T-s\right)^{- \alpha} \dd W_s,
\] hence 
\[\textstyle \expec{\left(\frac{X^\circ_t}{T-t}\right)^2} =  
\left(T-t\right)^{-2+2\alpha} \int_0^t \left(T-s\right)^{-2\alpha} \dd s\]

For $t < T$,
$\int_0^t \expec{\left(\frac{X^\circ_s}{T-s} \right)^2} \dd s <\infty$, so 
$\expec{\int_0^t \frac{X^\circ_s}{T-s}  \dd W_s} = 0$. 
Therefore
\begin{equation*}
\expec{\log \frac{\dd \P^\circ_t}{\dd \P^\star_t}({X^\circ}) } =   \frac12 ( \alpha-1)^2   \int_0^t \left(T-s\right)^{-2+2\alpha} \int_0^s \left(T-\tau\right)^{-2\alpha} \dd \tau \dd s.
\end{equation*}
Unless, $\alpha=1$, this diverges for  $t \uparrow T$. We conclude that the laws of $X^\star$ and $X^\circ$ are singular if  $\alpha \ne 1$.
\end{rem}

\begin{rem}
For implementation purposes integrals in likelihood ratios and solutions to stochastic differential equations  need to be approximated on a finite grid. This is a subtle numerical issue as the drift of our proposal bridge  has a singularity near its endpoint. In a forthcoming work \cite{vdMeulenSchauer} we show how this problem can be  dealt with.  The main idea in there is the  introduction of a time-change and space-scaling of the proposal process that allows for numerically accurate discretisation and evaluation of the likelihood.
\end{rem}

\section{Proof of Proposition \ref{prop:likeli}}\label{sec:proof_theorem}

We first note that by equation \eqref{eq:bound-rtilde}, $\tilde{r}$ is Lipschitz in its second argument on  $[0, t]$  
and satisfies a linear growth condition. Hence, a unique strong solution of the SDE for $X^\circ$ exists.

By Girsanov's theorem (see e.g.\ \cite{LiptserShiryayevI})
the laws of the processes $X$ and $X^\circ$ on $[0,t]$ are equivalent and the corresponding 
Radon-Nikodym derivative is given by 
\[
\frac{\dd\P_{t}}{\dd\P^\circ_{t}}({X^\circ})= 
\exp\Big(\int_0^{t} \beta_s^\T  \dd W_s  - \frac12 \int_0^{t} \|\beta_s\|^2 \dd s\Big), 
\]
where $W$ is a Brownian motion under $\P^\circ_{t}$  and $\beta_s = \beta(s, X^\circ_s)$ solves
$\sigma(s, X^\circ_s) \beta(s, X^\circ_s)  =  b(s, X^\circ_s)-b^\circ(s, X^\circ_s).$
(Here we  lightened notation by writing $\beta_s$ instead of $\beta(s, X^\circ_s)$. In the 
remainder of the proof we follow the same convention and apply it to  other processes as well.) 
Observe that by definition of $\tilde r$ and $b^\circ$ we have
$\beta_s =  -\sigma^\T_s \tilde{r}_s$ and $\|\beta_s\|^2 = \tilde{r}^\T_s a_s \tilde{r}_s$, hence 
\[
\frac{\dd\P_{t}}{\dd\P^\circ_{t}}({X^\circ})= 
\exp\Big(-\int_0^{t} \tilde{r}_s^\T \sigma_s  \dd W_s  - \frac12 \int_0^{t} \tilde{r}^\T_s a_s \tilde{r}_s \dd s\Big). 
\]
Denote the infinitesimal operator of $X^\circ$ by $\Ell^\circ$. By definition of $X^\circ$ 
and $\tilde R$ we have $\Ell^\circ \tilde R = \Ell \tilde R + \tilde{r}^\T a \tilde{r}$.
By   It\=o's formula, it follows that 
\[
\tilde R_t - \tilde R_0 = \int_0^t \Big(\frac{\partial}{\partial s}\tilde R + \Ell \tilde R\Big)\,\dd s + 
\int_0^t \tilde{r}^\T_s a_s \tilde{r}_s\,\dd s 
+ \int_0^t  \tilde{r}_s^\T\sigma_s\,\dd W_s.
\]
Combined with what we found above we get $\frac{\dd\P_{t}}{\dd\P^\circ_{t}}({X^\circ})= \e^{-(\tilde R_t - \tilde R_0)} \e^{\int_0^t G_s\dd s}, 
$ where $
\left( \frac{\partial}{\partial s} \tilde R+ \Ell \tilde R\right) +\frac12\tilde{r}^\T a  \tilde{r}$. By Lemma \ref{lem:time} ahead the first  term between brackets  on the right-hand-side of this display equals
$ \Ell \tilde{R}-\tilde\Ell \tilde{R} -\frac12 \tilde{r}^\T\tilde{a} \tilde{r}$.
Substituting this in the expression for $G$ gives
\begin{align*} 
	G =
  (b-\tilde b)^\T \tilde{r}  -\frac12 \trace\left( (a-\tilde{a}) \tilde{H}\right)+\frac12 \tilde{r}^\T (a-\tilde{a})\tilde{r} ,
\end{align*}
which is as given in the statement of the theorem.  Since 
$ -(\tilde R_t - \tilde R_0) = \log {  \tilde{p}(0,u)}/{\tilde{p}(t,X^\circ_t)}$,
we arrive at the first assertion of the proposition.  

To prove the second assertion, 
let   $0=t_0<t_1 < t_2 < \dots < t_N< t < T$ and define $x_0=u$. If  $g$ is  a bounded function on $\RR^{(N+1)}$, then standard calculations show $\expec{g(X^\star_{t_1}, \ldots, X^\star_{t_N}, X^\star_{t}) \frac{1}{p(t,X^\star_t)}} = \expec{g(X_{t_1}, \ldots, X_{t_N}, X_{t}) \frac{1}{p(0,u)} }
$, using the abbreviation $p(t,x) = p(t, x; T, v)$.
Since the grid and $g$ are arbitrary, this proves that for $t < T$, 
\begin{equation}\label{eq:ster}
\frac{\dd\P^\star_{t}}{\dd\P_t}(X) = \frac{p(t, X_t; T, v)}{p(0,u; T, v)}.
\end{equation}
Combined with the first statement of the proposition, this yields the second one.

\begin{lemma}\label{lem:time}
  $\tilde R$ satisfies the equation
\[
\frac{\partial}{\partial s} \tilde R + \tilde\Ell \tilde R  =-  \frac12  \tilde r^\T \tilde a \tilde  r .
\]
\end{lemma}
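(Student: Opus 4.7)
The plan is to derive the identity from Kolmogorov's backward equation for $\tilde p$ by direct computation, exploiting the fact that $\tilde R = \log \tilde p$ and that the appearance of $\tilde r \tilde r^\T$ on the right-hand side is precisely the quadratic term produced when passing derivatives through a logarithm.

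First I would recall that, under the regularity already assumed in the paper, Kolmogorov's backward equation gives
\[
\frac{\partial}{\partial s} \tilde p(s,x;T,v) + (\tilde\Ell \tilde p)(s,x;T,v) = 0,
\]
with $\tilde\Ell$ acting on $(s,x)$. Dividing by $\tilde p$ (which is strictly positive on $[0,T)\times\RR^d$ by assumption) converts the time derivative of $\tilde p$ into that of $\tilde R$, since $\partial_s \tilde R = \tilde p^{-1}\partial_s \tilde p$.

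Next I would handle the spatial part. Writing $\tilde r_i = \DD_i \tilde R$, the chain rule gives $\DD_i \tilde p = \tilde p\, \tilde r_i$ and
\[
\DD^2_{ij}\tilde p = \tilde p\bigl(\tilde r_i \tilde r_j + \DD^2_{ij}\tilde R\bigr).
\]
Plugging these into the definition \eqref{infgen} of $\tilde\Ell$ (applied to $\tilde p$, with $\tilde b,\tilde a$ in place of $b,a$) yields
\[
\frac{\tilde\Ell \tilde p}{\tilde p} = \tilde b^\T \tilde r + \frac12 \sum_{i,j}\tilde a_{ij}\DD^2_{ij}\tilde R + \frac12 \tilde r^\T \tilde a\,\tilde r = \tilde\Ell \tilde R + \frac12 \tilde r^\T \tilde a\,\tilde r.
\]
Substituting this back into the backward equation divided by $\tilde p$ gives
\[
\frac{\partial}{\partial s}\tilde R + \tilde\Ell \tilde R = -\frac12 \tilde r^\T \tilde a\,\tilde r,
\]
which is the claim.

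There is no real obstacle here; it is a one-paragraph calculation. The only point worth mentioning in the write-up is the justification for applying $\tilde\Ell$ and $\partial_s$ to $\tilde R$ termwise, which is immediate from the smoothness of $\tilde p$ implied by the regularity hypotheses invoked just after \eqref{infgen}, together with the strict positivity of $\tilde p(s,x;T,v)$ on $[0,T)\times\RR^d$ that was assumed at the setup stage.
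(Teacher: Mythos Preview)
Your proof is correct and follows essentially the same approach as the paper: divide Kolmogorov's backward equation by $\tilde p$ and use the identity relating $\DD^2_{ij}\tilde p/\tilde p$ to $\DD^2_{ij}\tilde R + \tilde r_i\tilde r_j$. The only cosmetic difference is that the paper writes this identity as $\DD^2_{ij}\tilde R = \DD^2_{ij}\tilde p/\tilde p - \tilde r_i\tilde r_j$ and substitutes it into the expanded backward equation, whereas you compute $\tilde\Ell\tilde p/\tilde p$ directly; the algebra is the same.
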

\begin{proof}
First note that
\begin{equation}\label{derivatives}	\DD^2_{ij} \tilde R(s,x)= \frac{\DD^2_{ij} \tilde p(s,x)}{\tilde p(s,x)}
- \left(\DD_i \tilde R(s,x)\right) \left(\DD_j  \tilde R(s,x)\right).
\end{equation}
Next,  Kolmogorov's backward equation is given by
\[	 \frac{\partial}{\partial s} \tilde p(s,x)+ \left(\tilde\Ell  \tilde p\right)(s,x)=0. \]
Dividing both sides by $\tilde p(s,x)$ and using (\ref{infgen}) we obtain
\begin{equation*}	\frac{\partial}{\partial s} \tilde R(s,x)=-
 \sum_{i=1}^{d}  \tilde b_i(s,x) \DD_i \tilde R(s,x) - \frac12\sum_{i,j=1}^{d} \tilde a_{ij}(s,x)\frac{\DD^2_{ij}  \tilde p(s,x) }{\tilde p(s,x)}
\end{equation*}
Now substitute (\ref{derivatives}) for the second term on the right-hand-side  and re-order terms to get the result.
\end{proof}

\section{Proof of Theorem \ref{thm:criterion}}
\label{sec:proof-thm-criterion}
Auxiliary lemmas used in the proof are gathered in Subsection \ref{subsec:res_thm-criterion} ahead.
As before we use the notation $p(s,x) = p(s, x; T, v)$ and similar for $\tilde p$. Moreover, we define
$\bar p = \frac{\tilde p(0,u)}{p(0, u)}.$
The main part of the proof consists in proving that $\bar p \psi(T)$ is 
indeed a Radon-Nikodym derivative, i.e.\ that it has expectation $1$. 
For $\eps \in (0,1/6)$ as in Assumption \ref{ass:Xcirc},  
  $m\in \NN$  and  a stochastic process $Z=(Z_t,\, t \in [0,T])$, define 
$$\sigma_m(Z) = T \wedge \inf_{t \in [0,T]} \{\abs{Z_t - v} \ge m(T-t)^{1/2-\eps}\}.$$ We suppress the dependence on $\eps$ in the notation. 
We write
$  \sigma_m = \sigma_m(X),
\sigma_m^\star = \sigma_m(X^\star)$, and $\sigma_m^\circ = \sigma_m(X^\circ)$.
Note that $\si_m^\circ \uparrow T$ holds in probability,  by Assumption \ref{ass:Xcirc}.

By Proposition \ref{prop:likeli}, for any $t<T$ and bounded, $\mathcal F_t$-measurable $f$, we have
\begin{equation}\label{likeliexp}
 \expec{ f(X^\star) \frac{ \tilde  p(t, X^\star_t)}{p(t, X^\star_t)}} =
\expec{ f(X^\circ)  \bar{p} \: \psi(t)}.
\end{equation}
By Corollary \ref{cor:psibound} in Subsection \ref{subsec:res_thm-criterion}, for each $m\in\NN$,  $\sup_{0\le t\le T}\psi(t)$ is
uniformly  bounded  on the event $\{T=\si_m^\circ\}$. Hence, by dominated convergence, 
\begin{align*}
 \expec{ \bar{p}\, \psi(T) \ind{T=\si_m^\circ}} 
=  \lim_{t\uparrow T} \expec{ \bar{p}\, \psi(t) \ind{t\le\si_m^\circ}}
 \le  \lim_{t\uparrow T} \expec{  \bar{p}\,  \psi(t) }
  =  \lim_{t\uparrow T} \expec{ \frac{\tilde{p}(t,\pin{X}_t)}{p(t,\pin{X}_t)}}=1.
\end{align*}
Here the final two  equalities follow from equation \eqref{likeliexp} and  Lemma \ref{lem:tildepoverp}, 
respectively. Taking the limit $m\to \infty$ we obtain 
$\expec{\bar{p} \,\psi(T) } \le 1$, 
by monotone convergence. 
For the reverse inequality note that by similar arguments as just used we obtain
$$\expec{\bar p\, \psi(T)} \ge \expec{\bar p\, \psi(T)  \ind{T = \sigma_m^\circ}} = \lim_{t \uparrow T} \expec{ \bar p\, \psi(t)  \ind{t \le \sigma^\circ_m} }= \lim_{t \uparrow T} \expec{ \frac{\tilde p(t, X^\star_t)}{p(t, X^\star_t)} \ind{t \le \sigma^\star_m}}. 
$$
By Lemma \ref{lem:expec_greater_1}, the right-hand-side of the preceding display tends to $1$ as $m\to \infty$. 
We conclude that $\bar{p}\, \expec{\psi(T)}=1$. 

To complete the proof we note that by 
 equation (\ref{likeliexp})  and Lemma \ref{lem:tildepoverp} we have  $\bar p\, \expec{\psi(t)} \to 1$ as $t\uparrow T$. 
 In view of the preceding and Scheff\'e's Lemma this implies that 
$\psi(t) \to \psi(T)$ in $L^1$-sense as $t \uparrow T$.  
Hence for $s < T$ and a bounded,  $\mathcal{F}_s$-measurable functional $g$, 
\[
\expec{g(X^\circ) \bar p \psi(T)} = \lim_{t \uparrow T} \expec{ g(X^\circ)
\frac{\tilde p(t, X^\circ_{t})}{p(t, X^\circ_{t})}\left(\bar p \frac{p(t, X^\circ_{t})}{\tilde p(t, X^\circ_{t})}  \psi(t)\right)}.
\]
Proposition \ref{prop:likeli}  implies that for $t > s$, the expectation on the right  equals  
\[
\expec{g(X^\star)
\frac{\tilde p(t, X^\star_{t})}{p(t, X^\star_{t})}}.
\]
By Lemma \ref{lem:tildepoverp} this converges to $\EE\, g (X^\star)$ as $t \uparrow T$ and we find that 
$\EE\, g(X^\circ) \bar p \psi(T) = \EE\, g (X^\star)$. Since $s < t$ and $g$ are arbitrary, this completes the proof.

\subsection{Auxiliary results used  in the proof of Theorem \ref{thm:criterion}}
\label{subsec:res_thm-criterion}
\begin{lemma}\label{lem:ftsxbound}
Suppose Assumptions \ref{ass:tildeX}(iii) and  \ref{ass:relaronson-for-p} apply.
For 
\begin{equation}\label{eq:ftsx}
f_t(s,x) =  \int  p(s,x;t,z)\, \tilde p(t,z; T, v)  \dd  z \qquad 0\le s< t <T,\: x\in \RR^d,
\end{equation}
there exist positive constants $c$ and $\lambda$ such that
\[ f_t(s,x) \le c  (T-s)^{-d/2} \exp\left(- \lambda \frac{\|v - x\|^2}{T-s}\right). \]
\end{lemma}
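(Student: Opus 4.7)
The plan is to substitute the Gaussian upper bounds from Assumption \ref{ass:relaronson-for-p} for $p$ and Assumption \ref{ass:tildeX}(iii) for $\tilde p$ into the integral defining $f_t(s,x)$ and evaluate the resulting Gaussian integral in $z$ explicitly. Writing $\mu = \mu_t(s,x)$, $a = \Lambda/(t-s)$ and $b = \tilde\Lambda/(T-t)$, completing the square in $z$ and integrating gives
\[
f_t(s,x) \le C\tilde C\,\pi^{d/2}\bigl[\Lambda(T-t) + \tilde\Lambda(t-s)\bigr]^{-d/2}\exp\!\left(-\frac{\Lambda\tilde\Lambda\,\|\mu-v\|^2}{\Lambda(T-t)+\tilde\Lambda(t-s)}\right).
\]
The prefactor is bounded by a constant multiple of $(T-s)^{-d/2}$ since $\Lambda(T-t)+\tilde\Lambda(t-s) \ge \min(\Lambda,\tilde\Lambda)(T-s)$, and the coefficient in the exponent satisfies $\Lambda\tilde\Lambda/[\Lambda(T-t)+\tilde\Lambda(t-s)] \ge \min(\Lambda,\tilde\Lambda)/(T-s)$.

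The main obstacle is then to transfer the Gaussian decay in $\|\mu - v\|$ into a Gaussian decay in $\|x - v\|$. Assumption \ref{ass:relaronson-for-p} provides only $\|\mu - x\| \le M(t-s)\|x\|$ and $\|\mu\|^2 \ge g_M(t-s)\|x\|^2$, neither of which directly compares $\|\mu - v\|$ with $\|x - v\|$. My plan is a case split on the size of $t-s$. Using the elementary inequality $\|A+B\|^2 \ge \tfrac12\|A\|^2 - \|B\|^2$, the bound $\|\mu-x\| \le M(t-s)\|x\|$, and $\|x\|^2 \le 2\|x-v\|^2 + 2\|v\|^2$, one readily derives
\[
\|\mu - v\|^2 \ge \bigl(\tfrac12 - 2M^2(t-s)^2\bigr)\|x-v\|^2 - 2M^2(t-s)^2\|v\|^2.
\]
I would fix $\delta > 0$ with $\tfrac12 - 2M^2\delta^2 \ge \tfrac14$. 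For $t-s \le \delta$ the coefficient of $\|x-v\|^2$ is at least $\tfrac14$, while the additive error, divided by $T-s$, obeys $(t-s)^2/(T-s) \le t-s \le T$ and hence contributes only a bounded constant to the exponent, yielding the claimed bound on this regime.

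In the complementary regime $t - s > \delta$ one automatically has $T - s > \delta$, so $(T-s)^{-1} \le \delta^{-1}$ is bounded. Here I would use the uniform lower bound $\|\mu\|^2 \ge \|x\|^2/M$ (from $g_M(u) \ge 1/M$), combined with $\|\mu - v\|^2 \ge \tfrac12\|\mu\|^2 - \|v\|^2$ and $\|x\|^2 \ge \tfrac12\|x-v\|^2 - \|v\|^2$, to conclude that $\|\mu-v\|^2 \ge \tfrac1{4M}\|x-v\|^2 - C(M,v)$ uniformly in $s < t < T$. The error term $C(M,v)/(T-s) \le C(M,v)/\delta$ now contributes only a multiplicative constant to the exponential, and combining the two regimes yields the desired bound $f_t(s,x) \le c(T-s)^{-d/2}\exp(-\lambda\|v-x\|^2/(T-s))$ for suitable positive constants $c$ and $\lambda$.
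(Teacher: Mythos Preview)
Your proof is correct. The first half---substituting the Gaussian upper bounds and evaluating the convolution---is essentially the same as in the paper (the paper replaces both $\Lambda$ and $\tilde\Lambda$ by $\bar\Lambda=\min(\Lambda,\tilde\Lambda)/2$ before integrating, which simplifies the constants but is otherwise equivalent to your computation).

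The difference lies in how you pass from $\|\mu_t(s,x)-v\|^2$ to $\|x-v\|^2$. The paper does this in one stroke: expanding $\|v-\mu\|^2=\|v\|^2-2v^{\T}\mu+\|\mu\|^2$, invoking $\|\mu\|^2\ge g_M(t-s)\|x\|^2$ directly, and rewriting to obtain
\[
\|v-\mu\|^2 \ge M^{-1}\|v-x\|^2 + (1-g_M(t-s))\|v\|^2 - 2v^{\T}(\mu - g_M(t-s)x),
\]
after which the cross term is controlled using $\|\mu-x\|\le M(t-s)\|x\|$ together with $1-g_M(t-s)\le M(t-s)$ and the fact that $(t-s)/(T-s)\le 1$. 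Your approach instead splits into the regimes $t-s\le\delta$ and $t-s>\delta$: in the first you only need $\|\mu-x\|\le M(t-s)\|x\|$, and in the second you only need $\|\mu\|^2\ge \|x\|^2/M$. Both routes arrive at the same conclusion. The paper's argument is more compact, while your case split is more elementary and makes it clearer which of the two hypotheses on $\mu_t(s,x)$ is doing the work in each regime.
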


\begin{proof}
Let $C, \tilde{C}, \Lambda$ and $\tilde\Lambda$ be the constant appearing in assumptions \ref{ass:tildeX}(iii) and \ref{ass:relaronson-for-p}. Define  $\bar\Lambda=\min(\Lambda, \tilde\Lambda)/2$. Denote by $\phi(z;\mu,\Sigma)$ the $N(\mu,\Sigma)$-density, evaluated at $z$. 
 Then there exists a $\bar{C}>0$ such that
\begin{align*}
f_t(s,x)  &\le \bar{C} \int \phi(z; \mu_t(s,x), \bar\Lambda^{-1}(t-s)\I_d)\phi(v-z; 0, \bar\Lambda^{-1}(T-t)\I_d)\dd z
\\ & = \bar{C} \phi(v; \mu_t(s,x), \bar\Lambda^{-1}(T-s)\I_d).
\end{align*}
Using the second assumed bound on $\mu_t(s,x)$ and the fact that $g_M(t-s)\ge 1/M$ we get
\[ \|v-\mu_t(s,x)\|^2 \ge M^{-1}\|v-x\|^2 + (1-g_M(t-s)) \|v\|^2 -2 v^\T(\mu_t(s,x)-g_M(t-s) x). \]
By Cauchy-Schwarz, the triangle inequality and the first assumed inequality we find 
\[ \left| v^\T(\mu_t(s,x)-g_M(t-s) x)\right| \le \|v\| \|x\| \left( M(t-s) + 1-g_M(t-s)\right). \]
We conclude that 
\begin{align*}
\frac{\|v-\mu_t(s,x)\|^2}{T-s} \ge & \frac1{M}\frac{\|v-x\|^2}{T-s} + \frac{1-g_M(t-s)}{T-s} \|v\|^2  \\ & - 2 \left( \frac{M(t-s)}{T-s} + \frac{1-g_M(t-s)}{T-s}\right) \|v\| \|x\| . 
\end{align*}
By definition of $g_M$, the multiplicative terms appearing in front of $\|v\|^2$ and $\|v\| \|x\|$ are both bounded. 
As there exist constants $D_1>0$ and $D_2 \in \RR$ such that the  third term on the right-hand-side can be lower bounded by $D_1 \|v-x\|^2 + D_2$ the result follows. 
\end{proof}

The following lemma is similar to Lemma 7 in \cite{DelyonHu}.

\begin{lemma}\label{lem:tildepoverp}
Suppose Assumptions \ref{ass:tildeX}(i), \ref{ass:tildeX}(iii) and  \ref{ass:relaronson-for-p} apply. 
If $0<t_1 < t_2 < \dots < t_N< t < T$ and  $g \in C_b(\RR^{Nd})$, then
$$\lim_{t \uparrow T}  \E\left[ \,g(X^\star_{t_1}, \dots, X^\star_{t_N}) \frac{\tilde p(t, X^\star_{t})}{p(t, X^\star_{t})}\right]  = \expec{g(X^\star_{t_1}, \dots, X^\star_{t_N})}.$$
\end{lemma}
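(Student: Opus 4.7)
The plan is to rewrite the expectation on the left-hand side under the measure $\P$ using the Radon-Nikodym derivative \eqref{eq:ster}, condition on $X_{t_N}$ via the Markov property so that the dependence on $t$ is isolated inside a deterministic function of $X_{t_N}$, and then pass to the limit using Assumptions \ref{ass:tildeX}(i) and \ref{ass:relaronson-for-p} together with Lemma \ref{lem:ftsxbound}.

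First I would apply \eqref{eq:ster} at time $t$ to get
\[
\E\!\left[g(X^\star_{t_1},\dots,X^\star_{t_N})\tfrac{\tilde p(t,X^\star_{t})}{p(t,X^\star_{t})}\right]
=\tfrac{1}{p(0,u)}\E\!\left[g(X_{t_1},\dots,X_{t_N})\,\tilde p(t,X_{t})\right],
\]
and then use the Markov property to condition on $X_{t_N}$, which turns the inner expectation into $f_t(t_N,X_{t_N})$ with $f_t$ as defined in \eqref{eq:ftsx}. This reduces the problem to showing that
\[
\E[g(X_{t_1},\dots,X_{t_N})\,f_t(t_N,X_{t_N})]\longrightarrow \E[g(X_{t_1},\dots,X_{t_N})\,p(t_N,X_{t_N};T,v)]
\]
as $t\uparrow T$, since the right-hand side equals $p(0,u)\,\E[g(X^\star_{t_1},\dots,X^\star_{t_N})]$ by another application of \eqref{eq:ster}, this time at the fixed time $t_N<T$.

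The key step is pointwise convergence $f_t(t_N,x)\to p(t_N,x;T,v)$ for each fixed $x$. For this I would apply Assumption \ref{ass:tildeX}(i) to the function $\phi(\tau,z)=p(t_N,x;\tau,z)$. For fixed $t_N$ and $x$, Assumption \ref{ass:relaronson-for-p} gives $\phi(\tau,z)\le C(\tau-t_N)^{-d/2}$, so $\phi$ is bounded and continuous on $[T-\delta,T]\times\RR^d$ for any $\delta<T-t_N$; after a harmless bounded-continuous extension to $[0,T]\times\RR^d$, Assumption \ref{ass:tildeX}(i) yields $\int p(t_N,x;t,z)\,\tilde p(t,z;T,v)\,dz\to p(t_N,x;T,v)$.

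To upgrade pointwise convergence to convergence of the expectation I would use dominated convergence with the bound from Lemma \ref{lem:ftsxbound}: since $s=t_N$ is fixed, $f_t(t_N,x)\le c(T-t_N)^{-d/2}\exp(-\lambda\|v-x\|^2/(T-t_N))$ uniformly in $t\in[t_N,T)$, and $g$ is bounded, so $|g(X_{t_1},\dots,X_{t_N})f_t(t_N,X_{t_N})|$ is dominated by the integrable (in fact bounded) random variable $\|g\|_\infty c(T-t_N)^{-d/2}$. Combining the three steps and recognising the limit via \eqref{eq:ster} at time $t_N$ completes the argument. The main obstacle is the pointwise convergence step: one needs the boundedness and continuity of $p(t_N,x;\cdot,\cdot)$ on a neighbourhood of $\{T\}\times\RR^d$ in order for Assumption \ref{ass:tildeX}(i) to apply, and this relies critically on the Gaussian upper bound in Assumption \ref{ass:relaronson-for-p}; once this is in hand, the remainder is routine.
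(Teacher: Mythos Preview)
The paper does not actually give its own proof of Lemma \ref{lem:tildepoverp}; it only remarks that the result is ``similar to Lemma~7 in \cite{DelyonHu}''. So there is no in-paper argument to compare against, and your proposal should be judged on its own merits.

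Your argument is correct and is precisely the natural route (and, essentially, the Delyon--Hu one): pass from $\P^\star$ to $\P$ via \eqref{eq:ster}, collapse the dependence on $t$ into $f_t(t_N,X_{t_N})$ via the Markov property, establish pointwise convergence $f_t(t_N,x)\to p(t_N,x;T,v)$ using Assumption~\ref{ass:tildeX}(i), and use Lemma~\ref{lem:ftsxbound} to dominate. A couple of small remarks that would make the write-up airtight:
\begin{itemize}
\item The application of Assumption~\ref{ass:tildeX}(i) to $\phi(\tau,z)=p(t_N,x;\tau,z)$ requires $\phi$ to be bounded and continuous on $[0,T]\times\RR^d$. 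Your extension trick is fine, but you should say explicitly that continuity of $(\tau,z)\mapsto p(t_N,x;\tau,z)$ on $[T-\delta,T]\times\RR^d$ is part of the standing smoothness assumption on the transition densities of $X$ (Section~2.1), and that boundedness on this strip follows from Assumption~\ref{ass:relaronson-for-p} since $\tau-t_N\ge T-\delta-t_N>0$ there. Any bounded continuous extension to $[0,T]\times\RR^d$ (e.g.\ freezing the first argument at $T-\delta$ for $\tau<T-\delta$) then does the job, because in \eqref{weakcont} only values of $f(t,\cdot)$ for $t$ near $T$ enter the limit.
\item For the domination step you only use the crude consequence $f_t(t_N,x)\le c(T-t_N)^{-d/2}$ of Lemma~\ref{lem:ftsxbound}, which is already enough since $g$ is bounded. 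It is worth noting that the same domination also justifies interchanging limit and expectation when identifying the right-hand side via \eqref{eq:ster} at the fixed time $t_N$.
\end{itemize}
With these clarifications the proof is complete.
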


\begin{lemma}
\label{lem:calcbounds}
Assume
\begin{enumerate}
\item $b(s,x)$, $\tilde{b}(s,x)$, $a(s,x)$ and $\tilde{a}(s,x)$ are locally Lipschitz in $s$ and  globally Lipschitz in $x$;
\item $\tilde{a}(T,v)=a(T,v)$.
\end{enumerate}
Then for all $x$ and for all $s\in [0,T)$,  
\begin{equation}\label{diffb}
 \|b(s,x)-\tilde{b}(s,x)\| \lesssim 1 + \|x-v\|  
 \end{equation}
and 
\begin{equation}\label{diffa} 
 \|a(s,x)-\tilde{a}(s,x)\|_F \lesssim (T-s) + \|x-v\|.
\end{equation}
If in addition $\tilde{r}$ and $\tilde{H}$ satisfy the bounds
\begin{align*}
&	\|\tilde r(s, x)\| \lesssim 1+\norm{x-v}(T-s)^{-1} \\ &  \|\tilde{H}(s, x)\|_F \lesssim    (T-s)^{-1} +   \norm{x-v}(T-s)^{-1},
\end{align*}
then
\[ \left| G(s,x)\right| \lesssim 1+ (T-s) + \|x-v\| +\frac{\|x-v\|}{T-s}+\frac{\|x-v\|^2}{T-s}+\frac{\|x-v\|^3}{(T-s)^2}. \]
\end{lemma}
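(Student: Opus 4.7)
The plan is to handle the three inequalities in order; the last two build on the first.

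For \eqref{diffb}, I would decompose
\[
b(s,x) - \tilde b(s,x) = [b(s,x) - b(s,v)] - [\tilde b(s,x) - \tilde b(s,v)] + [b(s,v) - \tilde b(s,v)].
\]
The first two brackets are bounded by $\|x-v\|$ using the global spatial Lipschitz property. For the third, local Lipschitzness in $s$ together with the compactness of $[0,T]$ makes $s \mapsto b(s,v)$ and $s \mapsto \tilde b(s,v)$ bounded on $[0,T]$, yielding a constant. This gives \eqref{diffb}.

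For \eqref{diffa}, I would apply the same spatial decomposition to $a - \tilde a$; Lipschitzness in $x$ bounds the off-diagonal pieces by $\|x-v\|$. For the residual $a(s,v) - \tilde a(s,v)$, the trick is to rewrite
\[
a(s,v) - \tilde a(s,v) = [a(s,v) - a(T,v)] - [\tilde a(s,v) - \tilde a(T,v)],
\]
using the matching condition $a(T,v) = \tilde a(T,v)$ to cancel the constant part; then local Lipschitzness in $s$ bounds both brackets by a constant times $(T-s)$. Combining gives \eqref{diffa}.

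For the bound on $G$, I would split $G = G_1 + G_2$ with $G_1 = (b-\tilde b)^\T \tilde r$ and $G_2 = -\tfrac12 \trace\bigl((a-\tilde a)(\tilde H - \tilde r\tilde r^\T)\bigr)$. Cauchy--Schwarz together with \eqref{diffb} and the assumed bound on $\|\tilde r\|$ gives
\[
|G_1(s,x)| \lesssim (1+\|x-v\|)\Bigl(1 + \tfrac{\|x-v\|}{T-s}\Bigr).
\]
For $G_2$ I would apply $|\trace(MN)| \le \|M\|_F \|N\|_F$, use the identity $\|\tilde r\tilde r^\T\|_F = \|\tilde r\|^2 \lesssim 1 + \|x-v\|^2/(T-s)^2$, and combine with the hypothesis on $\|\tilde H\|_F$ and \eqref{diffa} to obtain
\[
|G_2(s,x)| \lesssim \bigl((T-s)+\|x-v\|\bigr)\Bigl(\tfrac{1}{T-s} + \tfrac{\|x-v\|}{T-s} + 1 + \tfrac{\|x-v\|^2}{(T-s)^2}\Bigr).
\]

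The main obstacle is purely bookkeeping: expanding these products produces several cross terms of the form $(T-s)\|x-v\|$, $\|x-v\|^2$, $(T-s)^2$, $(T-s)\|x-v\|^2/(T-s)^2$, etc., that are not literally present in the target bound. Each can be absorbed using the elementary estimates $(T-s) \le T$, $\|x-v\|^2 \lesssim \|x-v\|^2/(T-s)$, and the case split $(T-s)\|x-v\| \lesssim (T-s) + \|x-v\|$ (according to whether $\|x-v\| \le 1$ or not), after which every surviving monomial matches one of the six terms on the right-hand side of the claimed inequality. Conceptually, the crucial ingredient is the matching condition $\tilde a(T,v) = a(T,v)$: without the extra $(T-s)$ factor in \eqref{diffa}, the product bound for $G_2$ would pick up a $(T-s)^{-1}$ term uniform in $x$, which is not present in the target estimate.
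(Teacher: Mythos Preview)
Your proposal is correct and follows essentially the same route as the paper: the same triangle-inequality decompositions for \eqref{diffb} and \eqref{diffa} (the paper chains through $a(s,x)\to a(T,x)\to a(T,v)\to\tilde a(T,v)\to\tilde a(s,v)\to\tilde a(s,x)$ whereas you go to $v$ first and then use time-Lipschitzness at the fixed point $v$, which is equivalent), and the same bound $|G|\le\|b-\tilde b\|\,\|\tilde r\|+\|a-\tilde a\|_F(\|\tilde H\|_F+\|\tilde r\|^2)$ via $|\trace(AB)|\le\|A\|_F\|B\|_F$. Your bookkeeping worry is overstated: once you expand the two products, every monomial already appears verbatim among the six target terms, so no absorption tricks are actually needed.
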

\begin{proof}
Since $|\mbox{tr}\,(AB)| \le \|A\|_F \|B\|_F$ and $\|A B\|_F \le \|A\|_F \|B\|_F$ for compatible matrices $A$ and $B$, we have
\begin{align}
\label{eq:boundD}
 \left|G(s,x)\right| & \le \|b(s,x)-\tilde{b}(s,x)\| \|\tilde{r}(s,x)\| +\nonumber \\ 
& \qquad \|a(s,x)-\tilde{a}(s,x)\|_F \left( \|\tilde{H}(s,x)\|_F + \|\tilde{r}(s,x)\|^2\right).
\end{align}
Bounding $\|b(s,x)-\tilde{b}(s,x)\|$ proceeds by using the assumed Lipschitz properties for $b$ and $\tilde{b}$. We have
\begin{align*}
\|b(s,x)-\tilde{b}(s,x)\| &  \le \|b(s,x)-b(s,v)\| + \|b(s,v)-\tilde{b}(s,v)\| 
+ \|\tilde{b}(s,v)-\tilde{b}(s,x)\| \\ & \le L_b \|x-v\| + \|b(s,v)-\tilde{b}(s,v)\| + L_{\tilde{b}} \|v-x\|,
\end{align*}
where $L_b$ and $L_{\tilde{b}}$ denote Lipschitz constants. Since $b(\cdot, v)$ and $\tilde{b}(\cdot, v)$ are continuous on $[0,T]$, we have $\|b(s,v)-\tilde{b}(s,v)\| \lesssim 1$. This inequality together with preceding display gives \eqref{diffb}. 

Bounding $\|a(s,x)-\tilde{a}(s,x)\|_F$ proceeds by using the assumed Lipschitz properties for $a$ and $\tilde{a}$ together with $\tilde{a}(T,v)=a(T,v)$. We have
\begin{align*}
 \|a(s,x)-\tilde{a}(s,x)\|_F & \le   \|a(s,x)-a(T,x)\|_F+\|a(T,x)-a(T,v)\|_F+\|a(T,v)-\tilde{a}(T,v)\|_F \notag\\&\qquad+ \|\tilde{a}(T,v)-\tilde{a}(s,v)\|_F+ \|\tilde{a}(s,v)-\tilde{a}(s,x)\|_F \notag\\& \lesssim (T-s) + \|x-v\|.
\end{align*}
The final result follows upon plugging in the derived estimates for $\|b(s,x)-\tilde{b}(s,x)\|$ and $\|a(s,x)-\tilde{a}(s,x)\|_F$ into equation \eqref{eq:boundD} and subsequently using the bounds on $\tilde{r}$ and $\tilde{H}$ from the assumptions of the lemma. 
\end{proof}


\begin{cor}
\label{cor:psibound}
Under the conditions of Lemma \ref{lem:calcbounds}, for all  $\eps \in (0,1/6)$ there is a positive constant $K$ (not depending on $m$) such that for all $t\in [0,T)$ 
\[	\psi(t) \ind{t\le \si_m^\circ} \le \exp\left(K m^3 \right).\]
\end{cor}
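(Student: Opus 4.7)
The plan is to directly bound the integrand of $\log \psi(t) = \int_0^t G(s, X^\circ_s)\,\dd s$ on the stopped event, and then integrate term by term. First I would fix $\eps \in (0,1/6)$ and note that on $\{t \le \si_m^\circ\}$, by the very definition of $\si_m^\circ$, we have $\|X^\circ_s - v\| \le m(T-s)^{1/2-\eps}$ for every $s \in [0,t]$. Then I would plug this pointwise bound into the estimate for $|G(s,x)|$ supplied by Lemma \ref{lem:calcbounds}, which is the main tool in the argument.

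Substituting $\|X^\circ_s-v\| \le m(T-s)^{1/2-\eps}$ into the six terms $1,\,(T-s),\,\|x-v\|,\,\|x-v\|/(T-s),\,\|x-v\|^2/(T-s),\,\|x-v\|^3/(T-s)^2$ produces, up to absolute constants, the bound
\[
|G(s, X^\circ_s)| \lesssim 1 + (T-s) + m(T-s)^{1/2-\eps} + m(T-s)^{-1/2-\eps} + m^2(T-s)^{-2\eps} + m^3(T-s)^{-1/2-3\eps}.
\]
Next I would integrate each term over $[0,t] \subset [0,T)$. The first three terms are uniformly bounded on $[0,T]$ and contribute at most a constant (times $m$ for the third one). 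The fourth term integrates provided $1/2+\eps < 1$, the fifth provided $2\eps < 1$, and the sixth provided $1/2+3\eps < 1$; all three constraints are ensured by the standing assumption $\eps < 1/6$. This is the crucial point where the choice of $\eps$ enters, and it is the only real obstacle in the argument: the cubic power $\|x-v\|^3$ paired with $(T-s)^{-2}$ is precisely the term forcing $\eps < 1/6$.

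After integration, each contribution is finite and independent of $t \in [0,T)$, with the largest factor of $m$ coming from the last term, giving a bound of the form $C_1 + C_2 m + C_3 m^2 + C_4 m^3 \le K m^3$ for some constant $K$ depending only on $T$, $\eps$ and the Lipschitz/boundedness constants implicit in Lemma \ref{lem:calcbounds}, but not on $m$. Exponentiating yields $\psi(t) \ind{t \le \si_m^\circ} \le \exp(K m^3)$, which is the claim.
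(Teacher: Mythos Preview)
Your proof is correct and follows essentially the same approach as the paper: use the definition of $\si_m^\circ$ to bound $\|X^\circ_s-v\|$, substitute into the estimate from Lemma~\ref{lem:calcbounds}, observe that each resulting power of $(T-s)$ is integrable on $[0,T]$ precisely because $\eps<1/6$, and exponentiate. The paper compresses the six terms into three by absorbing the lower-order pieces, but the argument and the key integrability check are identical.
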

\begin{proof}
On the event $\{ t\le \si_m^\circ \}$ we have 
\[ \|X^\circ_s-v\| \le m (T-s)^{1/2-\eps} \qquad \text{for all $s\in [0,t]$.}   \]
Together with the result of Lemma \ref{lem:calcbounds}, this implies that there is a constant $C>0$ (that does not depend on $m$) such that for all $s\in [0,t]$
\begin{align*} \left| G(s,X^\circ_s)\right| &\le C \left( 1+m(T-s)^{1/2-\eps} + m(T-s)^{-1/2-\eps} + m^2 (T-s)^{-2\eps} + m^3 (T-s)^{-1/2-3\eps}\right) \\ & \le Cm^3  \left( 1+(T-s)^{1/2-\eps} + (T-s)^{-1/2-3\eps}\right).
\end{align*}
Hence,
\[ \psi(t) \ind{t\le \si_m^\circ} \le \exp\left(Cm^3 \int_0^T \left( 1+(T-s)^{1/2-\eps} + (T-s)^{-1/2-3\eps}\right) \dd s \right) \le \exp\left(K m^3\right), \]
for some constant $K$.
\end{proof}

\begin{lemma}\label{lem:expec_greater_1}
Suppose Assumptions \ref{ass:tildeX}(i),  \ref{ass:tildeX}(iii) and \ref{ass:relaronson-for-p} apply. 
 Then 
\[	
\lim_{m\to \infty} \lim_{t \uparrow T} \expec{ \frac{\tilde p(t, X^\star_t)}{p(t, X^\star_t)} \ind{t \le  \sigma^\star_m}}=1.  
\]
\end{lemma}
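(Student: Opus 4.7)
The plan is to rewrite the expectation in terms of the unconditioned diffusion $X$, peel off a main piece whose limit as $t\uparrow T$ is $1$ by Lemma~\ref{lem:tildepoverp}, and control the remainder by a quantity that is uniform in $t$ and vanishes as $m\to\infty$.

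First, equation~\eqref{eq:ster} together with $\mathcal{F}_t$-measurability of $\{t\le\sigma^\star_m\}$ yields
\[
\expec{\frac{\tilde p(t,X^\star_t)}{p(t,X^\star_t)}\ind{t\le\sigma^\star_m}}
=\frac{1}{p(0,u;T,v)}\,\expec{\tilde p(t,X_t;T,v)\ind{t\le\sigma_m}},
\]
where $\sigma_m=\sigma_m(X)$. Splitting $\ind{t\le\sigma_m}=1-\ind{\sigma_m<t}$, the unrestricted piece equals $f_t(0,u)/p(0,u;T,v)$ with $f_t$ as in~\eqref{eq:ftsx}, and it tends to $1$ as $t\uparrow T$ by the trivial (no intermediate points, $g\equiv 1$) case of Lemma~\ref{lem:tildepoverp}. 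Hence it suffices to prove
\[
\lim_{m\to\infty}\,\sup_{0<t<T}\,\expec{\tilde p(t,X_t;T,v)\ind{\sigma_m<t}}=0.
\]

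For this remainder, I would condition on $\mathcal{F}_{\sigma_m}$ and use the Markov property, so that the inner conditional expectation becomes exactly $f_t(\sigma_m,X_{\sigma_m})$. By continuity of the paths of $X$, on $\{0<\sigma_m<T\}$ (which holds for all $m$ large enough that $\|u-v\|<mT^{1/2-\eps}$) one has $\|X_{\sigma_m}-v\|=m(T-\sigma_m)^{1/2-\eps}$, so Lemma~\ref{lem:ftsxbound} supplies the pathwise estimate
\[
f_t(\sigma_m,X_{\sigma_m})
\le c\,(T-\sigma_m)^{-d/2}\exp\bigl(-\lambda m^2(T-\sigma_m)^{-2\eps}\bigr)
\qquad\text{on }\{\sigma_m<t\}.
\]
A one-variable analysis of $h_m(u)=u^{-d/2}\exp(-\lambda m^2 u^{-2\eps})$ on $(0,T]$ shows that $h_m$ is unimodal with interior critical point at $u^\star\sim m^{1/\eps}$, so for all large $m$ the supremum on $(0,T]$ is attained at $u=T$ and equals $cT^{-d/2}\exp(-\lambda m^2 T^{-2\eps})\to 0$. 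This yields the uniform remainder bound and hence the lemma.

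The main obstacle, and the very reason for introducing the tube radius $m(T-s)^{1/2-\eps}$ in the definition of $\sigma_m$, will be the uniformity in $t$: since $\sigma_m$ can sit arbitrarily close to $T$, the polynomial factor $(T-\sigma_m)^{-d/2}$ coming from the Aronson-type upper bound of Lemma~\ref{lem:ftsxbound} diverges, and one has to balance it against the stretched-exponential gain coming from the lower bound on $\|X_{\sigma_m}-v\|$. The exponent $1/2-\eps\in(0,1/2)$ is chosen precisely so that this exponential gain dominates the polynomial blow-up as $m\to\infty$.
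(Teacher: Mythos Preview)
Your proof is correct and follows essentially the same route as the paper: change measure from $X^\star$ to $X$ via~\eqref{eq:ster}, split off the main term handled by Lemma~\ref{lem:tildepoverp}, condition the remainder on $\mathcal F_{\sigma_m}$ to reduce it to $f_t(\sigma_m,X_{\sigma_m})$, and then invoke Lemma~\ref{lem:ftsxbound} together with $\|X_{\sigma_m}-v\|=m(T-\sigma_m)^{1/2-\eps}$. The only cosmetic difference is in the final step: the paper bounds $\sup_{\tau\in(0,\infty)}\tau^{-d/2}\exp(-\lambda m^2\tau^{-2\eps})$ by its global maximum $\bigl(d/(4\lambda m^2 e\eps)\bigr)^{d/(4\eps)}$, whereas you observe that for $m$ large the maximizer lies beyond $T$, so the supremum over $(0,T]$ is attained at $T$ and decays exponentially in $m^2$; either bound suffices.
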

\begin{proof}
First, 
 \[ \expec{ \frac{\tilde p(t, X^\star_t)}{p(t, X^\star_t)} \ind{t \le \sigma^\star_m}} =  \expec{ \frac{\tilde p(t, X^\star_t)}{p(t, X^\star_t)}}  - \expec{ \frac{\tilde p(t, X^\star_t)}{p(t, X^\star_t)} \ind{t > \sigma_m^\star}}.\]
Hence,  by Lemma \ref{lem:tildepoverp},  it suffices to prove that the second term tends to $0$.
For $t < T$
\begin{align*}
  p(0,u) & \,\expec{ \frac{\tilde p(t, X^\star_t)}{p(t, X^\star_t)}\ind{t >\sigma_m^\star}}
 = \expec{\tilde p(t, X _t)  \ind{t >\sigma_m} } \\
& = \expec{ \expec{ \tilde p(t, X _t) \ind{t >\sigma_m} \mid  \mathcal F_{\sigma_m}}}
= \expec{ \ind{t >\sigma_m} \expec{ \tilde p(t, X _t)  \mid  \mathcal F_{\sigma_m}}} \\ &=
\expec{ \ind{t >\sigma_m}    \int p(\sigma_m,X_{\sigma_m}; t,z) \tilde p(t ,z) \dd z }
 = \expec{\ind{t >\sigma_m} f_t(\si_m, X_{\si_m})},
\end{align*}
where $f_t$ is defined in equation \eqref{eq:ftsx}. 
Here we used \eqref{eq:ster} and  the strong Markov property.
By Lemma  \ref{lem:ftsxbound},
\[	\expec{f_t(\si_m, X_{\si_m})} \lesssim  \expec{
(T-\si_m)^{-d/2} \exp\left(-\lambda \frac{\|v-X_{\sigma_m}\|^2}{T-\sigma_m}\right) }. \]
Since $\|v-X_{\sigma_m}\| = m (T-\si_m)^{1/2-\eps}$, the right-hand-side 
 can be bounded by a constant times $\expec{ (T-\si_m)^{-d/2} \exp\left(-\lambda m^2(T-\si_m)^{-2\eps}\right)}.$ Note that this expression does not depend on $t$. The proof is concluded by taking the limit $m\to \infty$. Trivially, $T-\si_m \in [0,T]$, so that the preceding display can be bounded by 
\[ C\sup_{\tau \in [0,\infty)} \tau^{-d/2} \exp\left(-\la m^2 \tau^{-2\eps}\right) \le C\left(\frac{d}{4\la m^2  \e \eps}\right)^{\frac{{d}}{{4\eps}}}. \]
This tends to $0$ as $m\to \infty$. 
\end{proof}

\section{Proof of Theorem \ref{thm:linearproc}(i)}
\label{sec:linproc}

It is well known (see for instance \cite{LiptserShiryayevI})  that the linear process 
$\tilde X$ is a Gaussian process that can be described in terms of the fundamental $d\times d$ matrix $\Phi(t)$, which satisfies
\[ \Phi(t) = \I + \int_0^t \tilde B(\tau) \Phi(\tau) \dd \tau. \]
We define  $\Phi(t,s)=\Phi(t)\Phi(s)^{-1}$, 
\begin{equation}\label{mugen}
	\mu_t(s,x) = \Phi(t,s) x + \int_s^t \Phi(t,\tau) \tilde \beta(\tau) \dd \tau
\end{equation}
and
\begin{equation}\label{Kgen}
 K_t(s)    = \int_{s}^t \Phi(t,\tau) a(\tau) \Phi(t,\tau)^\T \dd \tau. 
\end{equation}
To simplify notation, we use the convention that whenever the subscript $t$ is missing, it has the value of the end time $T$.  So we write  $\mu(s,x)=\mu_T(s,x)$ and $K(s)=K_T(s)$.
The Gaussian transition densities of  the process $\tilde X$ can be explicitly expressed 
in terms of the objects just defined. In particular we have
\begin{equation}\label{R-linear} \tilde R(s,x)
= -\frac{d}{2}\log(2\pi)-\frac12 \log |K(s)| -\frac12(v-\mu(s,x))^\T K(s)^{-1} (v-\mu(s,x))  .
\end{equation}
This will allow us to derive explicit expressions for all the functions involved in Assumption \ref{ass:tildeX}.

For future purposes, we state a number of properties of $\Phi(t,s)$, which are well known in  literature on linear differential equations (proofs can be found for example in Sections 2.1.1 up till 2.1.3 in \cite{Chicone}).
\begin{itemize}
\item $\Phi(t,s)\Phi(s,\tau)=\Phi(t,\tau)$, $\Phi(t,s)^{-1}=\Phi(s,t)$ and $\frac{\partial \Phi}{\partial s} (t,s)=-\Phi(t,s) B(s)$. 
\item There is a constant $C\ge 0$ such that for all $s, t \in [0,T]$, $\|\Phi(t,s)\| \le C$ (this is a consequence of Gronwall's lemma). 
\item $|\Phi(t,s)| = \exp\left( \int_s^t \trace(\tilde B(u)) \dd u \right)$ (Liouville's formula). 
\item If $\tilde B(t)\equiv \tilde B$ does not depend on $t$, $\Phi(t,s)=\exp(\tilde B(t-s))=\sum_{k=0}^\infty{\frac{1}{k!}}\tilde B^k (t-s)^k$. 
\end{itemize}
By Theorem 1.3 in \cite{Chicone}, we have that the mappings $(t,s,x) \mapsto \mu_t(s,x)$ and $(t,s) \mapsto \Phi_t(s)$ are continuously differentiable.

The following lemma provides the explicit expressions for the functions $\tilde r$ and $\tilde  H$.

\begin{lemma}\label{lemma:relation_H-r}
For  $s\in [0,T)$ and $x\in \RR^d$
 \begin{equation*}
\tilde r(s,x)=\DD \tilde R(s,x)=   \Phi(T,s)^\T K(s)^{-1} (v-\mu(s,x))
\end{equation*}
and
\begin{align}\label{eq:H-linear}
\tilde H(s, x) = \tilde H(s) &=-\DD \tilde r(s,x)=  \Phi(T,s)^\T K(s)^{-1} \Phi(T,s)\nonumber \\ &=\left(\int_{s}^T \Phi(s,\tau) \tilde a(\tau) \Phi(s,\tau)^{\T} \dd \tau\right)^{-1}.
\end{align}
Moreover, we have the relation $\tilde r(s,x) = \tilde H(s) (v(s)-x)$
where
\begin{equation}\label{def_v}v(s) =  \Phi(s,T) v - \int_s^T \Phi(s,\tau) \tilde \beta(\tau) \dd \tau.
\end{equation}
\end{lemma}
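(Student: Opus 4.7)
The plan is to compute $\tilde r$ and $\tilde H$ by direct differentiation of the closed-form expression \eqref{R-linear} for $\tilde R$, exploiting the fact that $x \mapsto \mu(s,x)$ is affine. Writing $\mu(s,x) = \Phi(T,s)x + c(s)$ with $c(s) = \int_s^T \Phi(T,\tau)\tilde\beta(\tau)\dd\tau$, only the quadratic form in \eqref{R-linear} depends on $x$, and $\DD(v-\mu(s,x)) = -\Phi(T,s)^\T$ in the notation of Section \ref{sec:notation}. Applying the identity $\DD(f(x)^\T A f(x)) = (\DD f(x))^\T(A+A^\T)f(x)$ with $f(x) = v-\mu(s,x)$ and $A = K(s)^{-1}$ (symmetric, since $K(s)$ is symmetric), I immediately get $\tilde r(s,x) = \Phi(T,s)^\T K(s)^{-1}(v-\mu(s,x))$.

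For $\tilde H$, since $\tilde r(s,x)$ is affine in $x$ with linear part $-\Phi(T,s)^\T K(s)^{-1}\Phi(T,s)\,x$, another application of the rule $\DD(Ax) = A^\T$ (together with the symmetry of $\Phi(T,s)^\T K(s)^{-1}\Phi(T,s)$) gives $\tilde H(s,x) = \Phi(T,s)^\T K(s)^{-1}\Phi(T,s)$, independent of $x$. To obtain the alternative integral representation in \eqref{eq:H-linear}, I use the semigroup identity $\Phi(s,\tau) = \Phi(T,s)^{-1}\Phi(T,\tau) = \Phi(s,T)\Phi(T,\tau)$ recalled just before the lemma. Then
\[
\int_s^T \Phi(s,\tau)\tilde a(\tau)\Phi(s,\tau)^\T\dd\tau = \Phi(T,s)^{-1} K(s)\bigl(\Phi(T,s)^{-1}\bigr)^\T,
\]
and inverting both sides yields exactly $\Phi(T,s)^\T K(s)^{-1}\Phi(T,s) = \tilde H(s)$.

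Finally, for the representation $\tilde r(s,x) = \tilde H(s)(v(s)-x)$, I plug in the formula for $v(s)$ from \eqref{def_v} and compute
\[
\tilde H(s)(v(s)-x) = \Phi(T,s)^\T K(s)^{-1}\Phi(T,s)\Bigl(\Phi(s,T)v - \textstyle\int_s^T \Phi(s,\tau)\tilde\beta(\tau)\dd\tau - x\Bigr).
\]
Distributing $\Phi(T,s)$ inside the parenthesis and using $\Phi(T,s)\Phi(s,T)=\I$ and $\Phi(T,s)\Phi(s,\tau) = \Phi(T,\tau)$ collapses the expression to $\Phi(T,s)^\T K(s)^{-1}(v-\mu(s,x))$, which equals $\tilde r(s,x)$.

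There is no real obstacle here: the lemma is a direct computation once one believes \eqref{R-linear}. The only thing one needs to be careful about is the convention $\DD(Ax)=A^\T$ from the notation section, which introduces the transposes in the formulas, and the consistent use of the semigroup identities for $\Phi$. No regularity or limit issues arise since, for fixed $s<T$, all objects involved are smooth in $x$.
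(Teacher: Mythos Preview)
Your proof is correct and follows essentially the same route as the paper: differentiate the Gaussian log-density \eqref{R-linear} using the affine dependence of $\mu(s,x)$ on $x$, then differentiate again for $\tilde H$, and finally use the semigroup identities for $\Phi$ to obtain the representation $\tilde r(s,x)=\tilde H(s)(v(s)-x)$. You even make explicit the verification of the integral formula for $\tilde H(s)^{-1}$ via $\Phi(s,\tau)=\Phi(T,s)^{-1}\Phi(T,\tau)$, which the paper leaves implicit.
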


\begin{proof}
We use the conventions and rules on differentiations outlined in Section \ref{sec:notation}. Since $K(s)$ is symmetric
\begin{align*}
\tilde r(s,x) &=  - \DD(v-\mu(s,x))  K(s)^{-1} (v-\mu(s,x))
\\& =\Phi(T,s)^\T K(s)^{-1} (v-\mu(s,x)) ,
\end{align*}
where we used $\DD \mu(s,x)=\Phi(s)^\T$.

By equation \eqref{mugen},
\begin{equation}\label{eq:v-mu} 
v-\mu(s,x)=v-\Phi(T,s) x -\int_s^T \Phi(T,\tau)  \tilde\beta(\tau) \dd \tau. 
\end{equation}
The expression for $\tilde H$ now  follows from
\begin{align*}
	\tilde H(s)  & = -\DD(\Phi(T,s)^\T K(s)^{-1} (v-\mu(s,x)))\\
 &=  \DD(\Phi(T,s)^\T K(s)^{-1} \Phi(T,s)x) = \Phi(T,s)^\T K(s)^{-1} \Phi(T,s) ,
\end{align*}
where the second equality follows from equation \eqref{eq:v-mu}. 

The final statement follows upon noting that
\begin{align*} \tilde r(s,x)&= \Phi(T,s)^\T K(s)^{-1} \Phi(T,s) \Phi(s,T) (v-\mu(s,x)) \\&= \tilde H(s)\Phi(s,T) (v-\mu(s,x))=\tilde H(s)( v(s)-x).
\end{align*}
The last equality follows by multiplying equation \eqref{eq:v-mu} from the left with $\Phi(s,T)$. 
\end{proof}

In the following three  subsections we use the explicit computations 
of the preceding lemma to verify  Assumption \ref{ass:tildeX}, 
in order to complete the proof statement (i) of Theorem \ref{thm:linearproc}.

\subsection{Assumption \ref{ass:tildeX}(i)}

\begin{lemma}\label{lem:weakconv}
If  $f\colon [0,T]\times\RR^d  \to \RR$ is bounded  and continuous then
 $$\lim_{t \to T} \int f(t, z) \tilde p(t, z; T, v)\dd z = f(T,v).$$
\end{lemma}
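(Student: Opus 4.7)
The plan is to express the integral as an expectation against a Gaussian measure in the initial-point variable, and then show that this Gaussian concentrates at $v$ as $t \uparrow T$.

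First I would use Lemma \ref{lemma:relation_H-r} (specifically the expression \eqref{R-linear} for $\tilde R$) to write out the Gaussian density explicitly:
\[
\tilde p(t,z;T,v) = (2\pi)^{-d/2} |K(t)|^{-1/2} \exp\!\left(-\tfrac12 (v - \mu(t,z))^\T K(t)^{-1}(v-\mu(t,z))\right),
\]
where $\mu(t,z) = \Phi(T,t) z + c(t)$ with $c(t) = \int_t^T \Phi(T,\tau)\tilde\beta(\tau)\dd\tau$. Although this is the density of $\tilde X_T$ in its \emph{second} slot, the affine dependence $\mu(t,z) = \Phi(T,t) z + c(t)$ lets me reinterpret it in the $z$-slot via the change of variables $a = \Phi(T,t) z$, whose Jacobian is $|\Phi(t,T)|$. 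After this substitution,
\[
\int f(t,z)\,\tilde p(t,z;T,v) \dd z = |\Phi(t,T)| \int f(t, \Phi(t,T) a)\, \rho_{v-c(t), K(t)}(a) \dd a,
\]
where $\rho_{m,\Sigma}$ denotes the $N(m,\Sigma)$-density. In probabilistic form the right-hand side is $|\Phi(t,T)|\, \E[f(t, \Phi(t,T) A_t)]$ with $A_t \sim N(v-c(t), K(t))$.

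Next I would verify that everything converges appropriately as $t \uparrow T$. By continuity of $s \mapsto \Phi(T,s)$ listed above the lemma, $\Phi(t,T) \to \I$ and $|\Phi(t,T)| \to 1$. Since $\tilde\beta$ and $\Phi$ are bounded on $[0,T]$, $c(t) \to 0$, and since $\tilde a$ and $\Phi$ are bounded, $\|K(t)\| \lesssim (T-t) \to 0$ by \eqref{Kgen}; consequently $A_t \to v$ in probability (and, by Slutsky, in distribution). Combined with joint continuity of $f$ and the boundedness $|f| \le \|f\|_\infty$, dominated convergence yields $\E[f(t, \Phi(t,T) A_t)] \to f(T,v)$, which is the claim.

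The only mildly subtle step is the change of variables, which requires the invertibility of $\Phi(T,t)$ (available from the fundamental matrix properties) and the fact that $K(t)$ remains positive definite for every $t<T$ — this is where the ellipticity hypothesis $y^\T \tilde a(s) y \ge \eta\|y\|^2$ is used, to ensure that $\rho_{v-c(t), K(t)}$ is a genuine density for all $t<T$. No uniform lower bound on $K(t)$ is needed in the limit, only that it shrinks to $0$, so I anticipate no real obstacle beyond bookkeeping of these elementary facts.
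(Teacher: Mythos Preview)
Your proposal is correct and follows essentially the same route as the paper: both rewrite the integral via the affine change of variables $a=\Phi(T,t)z$ (the paper shifts by $v(t)$ instead of absorbing it into the Gaussian mean, which is equivalent), and then use that the resulting Gaussian collapses to a point mass at $v$ together with boundedness and continuity of $f$ to pass to the limit. The only cosmetic difference is that the paper phrases the final step as ``weak convergence to a degenerate limit $\Rightarrow$ convergence in probability $\Rightarrow$ convergence of expectations by boundedness,'' whereas you invoke dominated convergence---which becomes literally applicable once you realize $A_t = v - c(t) + K(t)^{1/2}Z$ on a common probability space.
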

\begin{proof}
The log of the transition density of a linear process is given in equation (\ref{R-linear}). Using $v$ as defined in \eqref{def_v} and the expression for  $\mu$ as given in \eqref{mugen}, we get
\[ \mu(t,x)=\Phi(T,t)\left( x + \Phi(t,T) v -v(t)\right)= \Phi(T,t)(x-v(t)) +v. \]
This gives
\[	A(t,x):=(v-\mu(t,x))^\T K(t)^{-1} (v-\mu(t,x))=(\Phi(T,t)(x-v(t))^\T  K(t)^{-1} \Phi(T,t) (x-v(t))\]
It follows that we can write
\[ \int f(t, x) \tilde p(t, x; T, v)\dd z  = \int \frac{f(t,x)}{\sqrt{|K(t)|}} (2\pi)^{-d/2} \exp\left( -\frac12 A(t,x) \right)\dd x. \]
Upon  substituting $z=\Phi(T,t)(x-v(t))$ this equals
\[	\int f(t,\Phi(t,T) z +v(t)) (2\pi)^{-d/2} \frac1{\sqrt{|K(t)|}} \exp\left(-\frac12 z^\T K(t)^{-1} z\right) |\Phi(t,T)| \dd z.\]
We can rewrite this expression as $\expec{W_t}$ where
\begin{equation*} W_t= |\Phi(t,T)| f(t,\Phi(t,T) Z_t+v(t)) .
\end{equation*}
and  $Z_t$ denotes  a random vector with $N(0,K(t))$-distribution. As $t\uparrow T$, $Z_t$ converges weakly to a Dirac mass at zero. As $\Phi(t,T)$ converges to the identity matrix and $v(t) \to v$, we get that $\Phi(t,T) Z_t +v(t)$ converges weakly to $v$. By the continuous mapping theorem and continuity of $f$, $W_t$ converges weakly to $f(T,v)$. Since the limit is degenerate, this statement holds for convergence in probability as well. By boundedness of  $f$, we get $\expec{W_t} \to f(T,v)$.
\end{proof}

\subsection{Assumption \ref{ass:tildeX}(ii)}

\begin{lemma}
\label{lem:Hbound}
There exists a positive constant $C$ such that for all $s \in [0,T)$ and $x, y \in \RR^d$
\begin{align}
&	(T-s) \|\tilde H(s)\| \le C,
\\& \|\tilde r(s,x)\| \le C \left( 1 + \frac{\|v-x\|}{T-s}\right),
 \label{eq:bound-rsx}
\\&\|\tilde r(s,y) - \tilde r(s,x)\| \le C\frac{\|y-x\|}{T-s}
\\&  \frac{\|v-x\|}{T-s} \le C \left(1+ \|\tilde r(s,x)\|\right). 
\label{eq:vxr}
\end{align}
\end{lemma}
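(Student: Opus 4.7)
The proof plan leans entirely on the explicit formulas from Lemma \ref{lemma:relation_H-r}. Set $Q(s):=\int_s^T \Phi(s,\tau)\tilde a(\tau)\Phi(s,\tau)^\T\dd\tau$, so that $\tilde H(s)=Q(s)^{-1}$ and $\tilde r(s,x)=\tilde H(s)(v(s)-x)$. The strategy is to sandwich the symmetric matrix $Q(s)$ between two multiples of $(T-s)\I$; once this is done, every inequality in the lemma drops out by a one-line calculation.

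First, I would establish the lower bound $y^\T Q(s)y\gtrsim (T-s)\|y\|^2$. Using the ellipticity assumption $\tilde a(\tau)\succeq \eta\I$, I compute
\[
y^\T Q(s)y=\int_s^T (\Phi(s,\tau)^\T y)^\T\tilde a(\tau)(\Phi(s,\tau)^\T y)\dd\tau\ge \eta\int_s^T\|\Phi(s,\tau)^\T y\|^2\dd\tau.
\]
The map $(s,\tau)\mapsto\Phi(s,\tau)$ is continuous on the compact square $[0,T]^2$ and $\Phi(s,\tau)^{-1}=\Phi(\tau,s)$ is continuous there too, so $\|\Phi(s,\tau)^\T y\|\ge c_0\|y\|$ uniformly. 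The lower bound $y^\T Q(s)y\ge \eta c_0^2(T-s)\|y\|^2$ follows, which yields $\|\tilde H(s)\|\le 1/(\eta c_0^2(T-s))$ and hence claim (1). The upper bound $\|Q(s)\|\le C_1(T-s)$ follows immediately from boundedness of $\Phi$ and $\tilde a$; I will need this for (4).

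For (2) and (3), the identity $\tilde r(s,y)-\tilde r(s,x)=\tilde H(s)(x-y)$ directly gives (3) via claim (1). For (2), I write
\[
\|\tilde r(s,x)\|\le\|\tilde H(s)\|\bigl(\|v(s)-v\|+\|v-x\|\bigr),
\]
and it remains to show $\|v(s)-v\|\lesssim T-s$. Using \eqref{def_v}, $v(s)-v=(\Phi(s,T)-\I)v-\int_s^T\Phi(s,\tau)\tilde\beta(\tau)\dd\tau$; the integral term is $O(T-s)$ by boundedness of $\Phi$ and continuity of $\tilde\beta$ on $[0,T]$, while $\Phi(s,T)-\I=-\int_s^T\partial_\tau\Phi(\tau,T)\dd\tau$ is also $O(T-s)$ since $\Phi$ is continuously differentiable (recall $\partial_s\Phi(t,s)=-\Phi(t,s)\tilde B(s)$). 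Combining with (1) gives (2). For (4), inverting the relation $\tilde r(s,x)=\tilde H(s)(v(s)-x)$ yields $v(s)-x=Q(s)\tilde r(s,x)$, so $\|v(s)-x\|\le C_1(T-s)\|\tilde r(s,x)\|$; then
\[
\frac{\|v-x\|}{T-s}\le\frac{\|v-v(s)\|}{T-s}+\frac{\|v(s)-x\|}{T-s}\le C_2+C_1\|\tilde r(s,x)\|.
\]

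The only nontrivial step is the uniform ellipticity of $Q(s)$, which hinges on the uniform lower bound on $\|\Phi(s,\tau)^\T y\|$; once continuity of $\Phi$ and its inverse on a compact set is invoked, everything else reduces to tracking the elementary estimates above.
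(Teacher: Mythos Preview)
Your proof is correct and follows essentially the same route as the paper: both sandwich $\tilde H(s)^{-1}=Q(s)$ between multiples of $(T-s)\I$, then read off all four inequalities from the relation $\tilde r(s,x)=\tilde H(s)(v(s)-x)$ together with the estimate $\|v(s)-v\|\lesssim T-s$. The only cosmetic difference is that you obtain the uniform lower bound $\|\Phi(s,\tau)^\T y\|\ge c_0\|y\|$ directly from boundedness of the continuous inverse $\Phi(\tau,s)$ on $[0,T]^2$, whereas the paper argues by contradiction using Liouville's formula; your version is slightly cleaner.
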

\begin{proof}
In the proof, we  use the relations proved in Lemma \ref{lemma:relation_H-r}. From this lemma it follows that
\[	
\tilde H(s)^{-1}=\int_{s}^T \Phi(s,\tau) a(\tau) \Phi(s,\tau)^{T} \dd \tau. 
\]
Since $\Phi(s,\tau)$ is uniformly bounded and $\tau \mapsto \tilde a(\tau)$ is continuous, it easily follows that
$y^{\T} \tilde H(s)^{-1} y \le \tilde{c} (T-s) \|y\|^2$  for all $y \in \RR^d$.
By uniform ellipticity of $\tilde a$, there exists a constant $c_1>0$ such that  for all  $y \in \RR^d$ 
\[ y^\T \Phi(s,\tau) \tilde a(\tau) \Phi(s,\tau)^{\T} y \ge c_1 y^\T \Phi(s,\tau) \Phi(s,\tau)^\T y. \]
Secondly, there exists a constant $c_2 > 0$ such that $y'\Phi(s,\tau) \Phi(s,\tau)^\T y \ge c_2 \|y\|^2$ uniformly in $s, \tau \in [0, T]$.
To see this, suppose this second claim is false. 
Then for each $n \in \NN$ there are $s_n, \tau_n \in [0,T]$, $y_n \in \RR^d\setminus\{0\}$ such that 
$ \| \Phi(s_n,\tau_n)^{\T} y_n \|^2 \le \frac1n \|y_n\|^2$, or
letting $z_n = y_n/\|y_n\|$,
\[ \| \Phi(s_n,\tau_n)^{\T} z_n \|^2 \le \frac1n.\]
By compactness of the set $[0,T]^2\times \{z \in \RR^d, \|z\| = 1\}$ and by continuity of $\Phi$, there exists a convergent subsequence  $s_{n_i}, \tau_{n_i}, z_{n_i} \to s^*, \tau^*, z^*$, such that,
$\|\Phi(s^*,\tau^*)^{\T} z^* \|^2 = 0$ with  $z^* \ne 0$. This contradicts Liouville's formula.

Integrating over $\tau \in [s,T]$ gives
\begin{equation}
\label{eq:Hs-inv} 
	y^\T \tilde H(s)^{-1}y \ge c (T-s) \|y\|^2, 
\end{equation}
where $c=c_1 c_2$. 
Hence, we have proved that 
\[	c \|y\|^2 \le y^\T ((T-s) \tilde H(s))^{-1}y \le \tilde{c} \|y\|^2. \]
Since $\tilde H$ is symmetric, this says that the eigenvalues of the matrix $((T-s)\tilde H(s))^{-1}$ are contained in the interval $[c, \tilde{c}]$. This implies that the eigenvalues of $(T-s)\tilde H(s)$ are in $[1/\tilde{c}, 1/c]$.   
 Since the operator norm of a  positive definite matrix is bounded by its largest eigenvalue, it follows that 
 $(T-s)\|\tilde H(s)\|\le 1/c$.  

To prove the second inequality, note that 
\begin{align*}
 \tilde r(s,x)&= \tilde H(s) (v(s)-x)=\tilde H(s) \left[v(s)-v(T) + v-x\right]\\
& = (T-s) \tilde H(s) \left[-\frac{v(T)-v(s)}{T-s} + \frac{v-x}{T-s}. \right]
\end{align*}
Now 
\[ v(T)-v(s)=\left(\Phi(T,T) - \Phi(s,T)\right) v + \int_s^T \Phi(s,\tau) \tilde \beta(\tau) \dd \tau. \]
As $s\mapsto \Phi(s,T)$ is continuously differentiable, we have
\[ \|v(T)-v(s)\| \le C_1 (T-s) + \int_s^T \|\Phi(s,\tau)\| \|\tilde \beta(\tau)\| \dd \tau \le C_2 (T-s). \]
Hence,
\[	
  \|\tilde r(s,x)\| \le (T-s) \|\tilde H(s)\| \left( C_2 + \frac{\|v-x\|}{T-s} \right)
\]
which yields \eqref{eq:bound-rsx}. 
Also,
\[\| \tilde r(s,x) - \tilde r(s,y)\| =
\|\tilde H(s)(y-x)\| \lesssim \frac{\|y - x\|}{T - s}. \]

For obtaining the fourth inequality of the lemma, 
\[ \tilde H(s) (v-x) = \tilde r(s,x) + \tilde H(s)(v(T)-v(s)). \]
Upon multiplying both sides by $((T-s)\tilde H(s))^{-1}$ this gives
\[ \frac{\|v-x\|}{T-s} \le \|((T-s)\tilde H(s))^{-1}\|  \|r(s,x)\| + \frac{\|v(T)-v(s)\|}{T-s}. \]
Substitution of the derived bounds on $\tilde H(s)^{-1}$ and $v(T)-v(s)$ completes the proof.
\end{proof}

\subsection{Assumption \ref{ass:tildeX}(iii)}

\begin{lemma}
\label{ass:aronson-linearproc}
There exist positive  constants $C$ and $\Lambda$ such that for all $s \in [0,T)$
\begin{equation}
\label{eq:pbound-linearproc}
   \tilde p(s,x; T, v) \le C (T-s)^{-d/2} \exp\left(-\Lambda\frac{\|v-x\|^2}{T-s}\right).
\end{equation}
\end{lemma}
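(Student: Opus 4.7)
The plan is to exploit the explicit Gaussian form of $\tilde p$ already derived and then reuse the bounds on $\tilde H$ produced in Lemma \ref{lem:Hbound}. From equation \eqref{R-linear},
\[
\tilde p(s,x;T,v) = (2\pi)^{-d/2}|K(s)|^{-1/2}\exp\!\left(-\tfrac12 (v-\mu(s,x))^\T K(s)^{-1}(v-\mu(s,x))\right),
\]
so I need an upper bound on $|K(s)|^{-1/2}$ of order $(T-s)^{-d/2}$ and a lower bound of order $\|v-x\|^2/(T-s)$ on the quadratic form in the exponent.

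First I would rewrite the exponent using the identity $v-\mu(s,x)=\Phi(T,s)(v(s)-x)$, which follows from equation \eqref{eq:v-mu} and the definition \eqref{def_v} of $v(s)$. Together with $\tilde H(s)=\Phi(T,s)^\T K(s)^{-1}\Phi(T,s)$ from Lemma \ref{lemma:relation_H-r}, this yields
\[
(v-\mu(s,x))^\T K(s)^{-1}(v-\mu(s,x)) = (v(s)-x)^\T \tilde H(s)(v(s)-x).
\]
The proof of Lemma \ref{lem:Hbound} already establishes that the eigenvalues of $(T-s)\tilde H(s)$ lie in an interval $[1/\tilde c, 1/c]$ with constants independent of $s$. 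The lower endpoint gives $(v(s)-x)^\T \tilde H(s)(v(s)-x)\ge \tilde c^{-1}\|v(s)-x\|^2/(T-s)$, and the upper endpoint together with $K(s)=\Phi(T,s)\tilde H(s)^{-1}\Phi(T,s)^\T$ and Liouville's formula (which makes $|\Phi(T,s)|$ bounded away from $0$ and $\infty$ on $[0,T]$) gives $|K(s)|\ge \mathrm{const}\cdot(T-s)^d$, hence $|K(s)|^{-1/2}\lesssim (T-s)^{-d/2}$.

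It then remains to replace $\|v(s)-x\|^2$ by $\|v-x\|^2$. Since $s\mapsto v(s)$ is continuously differentiable on $[0,T]$ (recalled from the proof of Lemma \ref{lem:Hbound} where one shows $\|v(T)-v(s)\|\le C_2(T-s)$), I would apply the elementary inequality $\|a-b\|^2\ge \tfrac12\|a\|^2-\|b\|^2$ with $a=v-x$ and $b=v-v(s)$ to obtain
\[
\|v(s)-x\|^2 \ge \tfrac12\|v-x\|^2 - C_2^2(T-s)^2.
\]
Inserting this into the exponent yields
\[
\exp\!\left(-\frac{\|v(s)-x\|^2}{2\tilde c(T-s)}\right) \le \exp\!\left(\frac{C_2^2 T}{2\tilde c}\right)\exp\!\left(-\frac{\|v-x\|^2}{4\tilde c(T-s)}\right),
\]
and choosing $\Lambda=1/(4\tilde c)$ and absorbing the finite constant into $C$ gives the claim. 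There is essentially no hard step here: all the substantive work is in Lemma \ref{lem:Hbound} and in Liouville's formula; the proof is mostly bookkeeping, with the only mild subtlety being the translation from $\|v(s)-x\|$ to $\|v-x\|$, which is handled by the elementary quadratic inequality above.
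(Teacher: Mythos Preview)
Your proof is correct and follows essentially the same route as the paper. Both arguments bound $|K(s)|^{-1/2}$ via $|K(s)|=|\Phi(T,s)|^2/|\tilde H(s)|$, the eigenvalue bounds on $(T-s)\tilde H(s)$ from Lemma~\ref{lem:Hbound}, and Liouville's formula; and both handle the exponent using those same eigenvalue bounds together with the estimate $\|v(T)-v(s)\|\le C_2(T-s)$. The only cosmetic difference is that the paper writes the exponent as $\tilde r(s,x)^\T\tilde H(s)^{-1}\tilde r(s,x)$ and then invokes inequality~\eqref{eq:vxr}, whereas you write the same quantity as $(v(s)-x)^\T\tilde H(s)(v(s)-x)$ and pass directly from $\|v(s)-x\|$ to $\|v-x\|$ via the elementary inequality $\|a-b\|^2\ge\tfrac12\|a\|^2-\|b\|^2$; your version is arguably a touch more direct.
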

\begin{proof}
Using the relations from Lemma \ref{lemma:relation_H-r} together with equation \eqref{R-linear}, some straightforward calculations yield
\[  
\tilde R(s,x)
= -\frac{d}{2}\log(2\pi) -\frac12 \log|K(s)| -\frac12 \tilde r(s,x)^\T \tilde H(s)^{-1} \tilde r(s,x)  .
\]
By \eqref{eq:Hs-inv}, there exists a positive constant $c_1 > 0$ such that 
\[	\tilde r(s,x)^\T \tilde H(s)^{-1} \tilde r(s,x)  \ge c_1(T-s) \|\tilde r(s,x)\|^2. \]
By equation \eqref{eq:vxr} the right-hand-side is lower bounded by
\[ c_1 \left\{\max\left(\frac{\|x-v\|}{\sqrt{T-s}} - c_2 \sqrt{T-s},0\right) \right\}^2 \]
for some positive constant $c_2$. Now if $a\ge 0$ and $b \in [0,c_2]$, then there exist $c_3, c_4 >0$ such that $\left(\max(a-b,0)\right)^2 \ge c_3 a^2 -c_4$ (this is best seen by drawing a picture). Applying this with $a=\|v-x\|/\sqrt{T-s}$ and $b=c_2 \sqrt{T-s}$ gives
\[	\tilde r(s,x)^\T \tilde H(s)^{-1} \tilde r(s,x)  \ge c_1 \left(c_3 \frac{\|v-x\|^2}{T-s}-c_4\right). \]
This yields the exponential bound in \eqref{eq:pbound-linearproc}.

Since $\tilde H(s)^{-1}=\Phi(s,T) K(s) \Phi(s,T)^{T}$ we have
$|K(s)| = \frac{|\Phi(T,s)|^2}{|\tilde H(s)|}.$ Multiplying both sides by $(T-s)^{-d}$ gives 
 \[ (T-s)^{-d} |K(s)| = \frac{|\Phi(T,s)|^2}{|(T-s)\tilde H(s)|}. \] 
Since the eigenvalues of $(T-s)\tilde H(s)$ are bounded by $1/c$ uniformly over $s\in [0,T]$ (see Lemma \ref{lem:Hbound}) and the determinant of a symmetric matrix equals the product of its eigenvalues, we get
 \[ (T-s)^{-d} |K(s)| \ge  |\Phi(T,s)|^2 c^d =c^d \exp\left(2\int_s^T \trace(\tilde B(u)) \dd u \right). \] 
by Liouville's formula. Now it follows that the right-hand-side of the preceding display is bounded away from zero uniformly over $s\in [0,T]$. 
\end{proof}

\section{Proof of Theorem \ref{thm:linearproc}(ii)}\label{sec:proof_linearproc}
Auxiliary results used in the proof are gathered in Subsection \ref{sec:proof_ui_lemmas} ahead.

By \eqref{eq:vxr} in Lemma \ref{lem:Hbound} we have 
$\|x - v\| \lesssim (T-t) (1+\|\tilde r(t,x)\|)$. Therefore we focus on bounding $\|\tilde r (t, x)\|$. 
Define $w$ to be the positive definite square root of $a(T,v)$. 
Then it follows from our assumptions that $\|w\| < \infty$ and $\|w^{-1}\|< \infty$, 
hence we can equivalently derive a bound for  
$\tilde{Z}(s,x)=w\, \tilde{r}(s,x).$ We do this in two steps. First we obtain a preliminary bound by 
writing an SDE for $\tilde Z$ and bounding the terms in the equation. Next we 
strengthen the bound using a Gronwall-type inequality. 

By Lemma \ref{lem:sde-rtilde}, $\tilde{Z}$ satisfies the stochastic differential equation
\begin{equation}\label{sde_Z}
\dd \tilde{Z}(s,X^\circ_s)=-w\tilde{H}(s) \si(s,X^\circ_s) \dd W_s + \Upsilon(s, X^\circ_s) \dd s+\Delta(s,X^\circ_s) \tilde{Z}(s,X^\circ_s)  \dd s,
\end{equation}
where
\begin{align}\label{eq:Delta}
 \Delta(s,X^\circ_s)&=  w \left( \tilde{H}(s) \left(\tilde{a}(s)-a(s,X^\circ_s)\right)- \tilde{B}(s)\right)w^{-1} \\
\Upsilon(s,X^\circ_s)&=w \tilde{H}(s)\left(\tilde{b}(s,X^\circ_s)-b(s,X^\circ_s)\right).
\end{align}
Define $\tilde{J}(s)=w \tilde{H}(s)w$.
For $\Delta$ we have the decomposition $\Delta=\Delta_1+\Delta_2 + \Delta_3$, with
\begin{align}
  \Delta_1(s,X^\circ_s) &= \frac{1}{T-s} \left(\I -w^{-1} a(s,X^\circ_s) w^{-1}\right) \label{eq:def:Delta1} \\ 
  \Delta_2(s,X^\circ_s) &= \left(\tilde{J}(s)- \frac{1}{T-s}\right) \left(\I -w^{-1} a(s,X^\circ_s) w^{-1}\right)\nonumber  \\ 
\Delta_3(s) &= w \left[\tilde{H}(s) \left( \tilde{a}(s)-\tilde{a}(T)\right) -\tilde{B}(s)\right] w^{-1}.\nonumber
\end{align}
To see this, we calculate
$
	\Delta_1(s,X^\circ_s) + \Delta_2(s,X^\circ_s) =  \tilde{J}(s) \left(\I -w^{-1} a(s,X^\circ_s) w^{-1}\right)
$ and
\[
 \Delta(s,X^\circ_s)-\Delta_1(s,X^\circ_s) -\Delta_2(s,X^\circ_s)  = w\left[\tilde{H}(s) \tilde{a}(s) - \tilde{B}(s)\right] w^{-1} - \tilde{J}(s). \]
Upon substituting
$\tilde{J}(s)= w\tilde{H}(s)a(T,v) w^{-1}=w\tilde{H}(s) \tilde{a}(T)w^{-1}$
into this display we end up with exactly $\Delta_3(s)$.

For $\Upsilon$ we have a decomposition $\Upsilon = \Upsilon_1 \tilde{Z} + \Upsilon_2$
with
\begin{align*}
\Upsilon_1(s,X^\circ_s) &=  w \tilde H(s) (B(s,X^\circ_s) - \tilde B(s)) \tilde H^{-1}(s) w^{-1}  \\
\Upsilon_2(s,X^\circ_s) &=  w \tilde H(s)[\tilde \beta(s)-\beta(s)   -(B(s,X^\circ_s)-\tilde B(s)) v(s)].
\end{align*}
Here, ${v}(s)$ is as defined in \eqref{def_v}. 
To prove the decomposition, first note that $\Upsilon$, $\Upsilon_1$ and $\Upsilon_2$ share the factor $w\tilde{H}(s)$. Therefore, it suffices to prove that 
\begin{multline}
\label{eq:decomp_Ups}
  \tilde{b}(s,x)-b(s,x)- (B(s,x) - \tilde B(s)) \tilde H^{-1}(s) w^{-1} \tilde{Z}(s,x)  \\  = \tilde{\beta}(s)-\beta(s,x)   -(B(s,x)-\tilde B(s)) v(s). 
\end{multline}
By Lemma \ref{lemma:relation_H-r}, $\tilde{Z}(s,x)=w\tilde{r}(s,x)=w\tilde{H}(s)\left(\tilde{v}(s)-x\right)$. Upon substituting this into the left-hand-side of the preceding display we obtain
\[ \left(\tilde{B}(s)-B(s,x)\right) x + \tilde{\beta}(s)-\beta(s,x) - \left(B(s,x)-\tilde{B}(s)\right)\left(\tilde{v}(s)-x\right) , \]
which is easily seen to be equal to the right-hand-side of \eqref{eq:decomp_Ups}.
Thus, \eqref{sde_Z} can be written as
 \begin{multline}\label{sde_Z_transformed}
 \dd \tilde{Z}(s,X^\circ_s) =-w\tilde{H}(s)  \si(s,X^\circ_s)  \dd W_s \\ + \left[\Delta_1(s,X^\circ_s) + \Delta_2(s,X^\circ_s)+ \Delta_3(s) + \Upsilon_1(s,X^\circ_s)\right] \tilde{Z}(s,X^\circ_s)  \dd s + \Upsilon_2(s,X^\circ_s) \dd s.
 \end{multline}
Next, we derive bounds on $\Delta_1$, $\Delta_2$,  $\Delta_3$, $\Upsilon_1$ and $\Upsilon_2$. 
\begin{itemize}
\item By Lemma \ref{lem:Delta1bound} it follows that there is a $\eps_0 \in (0,1/2)$  such that 
\[ y^\T \Delta_1(s,X^\circ_s)y \le \frac{1-\eps_0}{T-s}\norm{y}^2 \quad \text{for all} \quad s\in [0,T)\quad \text{and} \quad y \in \RR^d.\] 
\item By Lemma \ref{lem:Delta2bound}, $\|\tilde{J}(s)-\I/(T-s)\|$ is bounded for  $s\in [0,T]$.  As  $\sigma$ is bounded, this implies 
$\Delta_2$ can be bounded by deterministic constant $C_1>0$.
\item 
For $\Delta_3$, we  employ the Lipschitz property of $\tilde{a}$ to deduce that there is a deterministic constant $C_2>0$ such that 
\[ \|\Delta_3(s)\| \le (T-s) \|\tilde{H}(s)\|\left\| \frac{\tilde{a}(s)-\tilde{a}(T)}{T-s}\right\| + \|\tilde{B}(s)\| \le C_2 . \]
\item 
Since $(s,x)\mapsto B(s,x)$ is assumed to be bounded, there exists a deterministic constant $C_3>0$ such that
\[  \|\Upsilon_1(s,X^\circ_s)\|\le    \|B(s,X^\circ_s) - \tilde B(s)\| \le C_3. \]
\item   
Similarly, using that  $s\mapsto \tilde{v}(s)$ is bounded on $[0,T]$,  we have the existence of a deterministic constant $C_4$ such that 
\begin{multline*}  (T-s) \|\Upsilon_2(s)\| =\\ \|w\| (T-s)\left\|\tilde{H}(s)\right\|\left[\|\tilde{\beta}(s)\|+\|\beta(s,X^\circ_s)\|   +\|B(s,X^\circ_s)-\tilde B(s)\| \|v(s)\|\right] \le C_4.
\end{multline*}
\end{itemize}
Now we set $A(s,x) = \Delta_1(s,x) +  \Delta_2(s,x) + \Delta_3(s) + \Upsilon_1(s,x)$
and let $\Psi(s)$ be the principal fundamental matrix at 0 for the corresponding random homogeneous linear system
\begin{equation}
\label{defPsi}
\dd \Psi(s)= A(s, X_s^\circ)  \Psi(s)\dd s,\qquad \Psi(0)=\I.
 \end{equation}
Since $s\mapsto A(s,X_s^\circ)$ is continuous for each realization $X^\circ$, $\Psi(s)$ exists uniquely (\cite{Chicone}, Theorem 2.4). 
 Using the just derived bounds, for all $y \in \RR^d$
\[	y^{\T} A(s,X^\circ_s) y \le \frac{1-\eps_0}{T-s} \|y\|^2 + C_1+C_2+C_3. \]
By  Lemma \ref{lem:SidedPsiBound}, this implies existence of a positive constant $C$ such that 
\[ \| \Psi(t) \Psi(s)^{-1}\| \le C \left(\frac{T-s}{T-t}\right)^{1-\eps_0}, \qquad 0\le s\le t<T.\]
By  Lemma \ref{lem:linsol},  for $s<T$ we can represent $\tilde{Z}$ as 
 \begin{equation}\label{represent-Z2}	\tilde{Z}(s,X^\circ_s)= \Psi(s) \tilde{Z}(0,u) +  \Psi(s) \int_0^s  \Psi(h)^{-1} \Upsilon_2(h)  \dd h -  M_s, \end{equation}
where
\begin{equation}\label{defhatM}	 M_s = \Psi(s) \int_0^s \Psi(h)^{-1}  w \tilde{H}(h) \sigma(h, X^\circ_h)  \dd W_h. \end{equation}
Bounding $\|\tilde{Z}(s,X^\circ)\|$ can be done by bounding the norm of each term on the right-hand-side of equation \eqref{represent-Z2}.

The norm of the first term can be bounded by 
$\|\tilde{Z}(0,u)\| \|\Psi(s)\| \lesssim (T-s)^{\eps_0-1}$. 
The norm of the second one can be bounded by 
\[ 
 \int_0^s  \left(\frac{T-h}{T-s}\right)^{1-\eps_0} \frac{1}{T-h} \|\Upsilon_2(h)(T-h)\| \dd h \lesssim (T-s)^{\eps_0-1}.
 \]
For the third term,  it follows from  Lemma \ref{lem:integ}, 
applied  with $U(s,h) =   w \tilde{H}(h)\si(h,X^\circ_h)$,  that   there is an 
a.s.\ finite  random variable $\overline{M}$ such that for all $s<T$
$\left\| M_s \right\|  \le \overline{M} (T-s)^{\eps_0-1}.$ 
Therefore, there exists a random variable $\overline{M}'$ such that  
\begin{equation}\label{eq: rho}	
\|\tilde{Z}(s,X^\circ_s)\| \le \overline{M}' (T-s)^{\eps_0-1}. 
\end{equation}

We finish the proof by showing that the bound obtained can be improved upon.
We go back to equation \eqref{sde_Z} and consider the various terms. 
By inequality \eqref{diffb} and the inequalities of Lemma \ref{lem:Hbound} we can bound
\begin{equation*} \|\Upsilon(s,x)\| \lesssim   \| \tilde{H}(s)\| \left(1 + \|x-v\|\right)   \lesssim (T-s)^{-1} + \frac{\|v-x\|}{T-s} \lesssim 1+ (T-s)^{-1} + \|\tilde{Z}(s,x)\|. 
\end{equation*}
Similarly, using inequality \eqref{diffa}
\begin{equation*} \|\Delta(s,x) \| \lesssim 1 + \frac{\|v-x\|}{T-s}   \lesssim 1+ \|\tilde{Z}(s,x)\|.
\end{equation*}
The quadratic variation $\qv{L}$ of the martingale part 
$L_t = \int_0^tw\tilde{H}(s) \si(s,X^\circ_s) \dd W_s$
is given by $\qv{L}_t = \int_0^t w\tilde H(s) a(s, X^\circ_s)\tilde H(s) w\dd s.$
Hence, by the boundedness of $\|\tilde{H}(s)(T-s)\|$ we have
\[
\|\qv{L}_t\| \lesssim \int_0^t \frac1{(T-s)^2}\dd s = \frac{1}{T-t} - \frac1T \le \frac1{T-t}.
\]
By the Dambis-Dubins-Schwarz time-change theorem and the law of the iterated logarithm 
of Brownian motion, it follows that there exists an a.s.\ finite random variable $N$ such that 
$\|L_t\| \le Nf(t)$ for all $t < T$, where 
\[
f(t) = \sqrt{\frac1{T-t}\log \log \left(\frac1{T-t} + e\right)}.
\]
Taking the norm on the left- and right-hand-side of equation \eqref{sde_Z}, applying the derived bounds 
and using that $\int_0^t(T-s)^{-1} \dd s \lesssim \sqrt{1/({T-t})}$ we get with $\rho(s) = \|  \tilde{Z}(s,X^\circ_s)\|$ that 
$\rho(t) \le N f(t) +  C\int_0^t \left( \rho(s) + \rho^2(s)\right) \dd s$, $t < T$ for some positive constant $C$.
The bound \eqref{eq: rho} derived above implies that $\rho$ is 
integrable on $[0,T]$. 
The proof of assertion (ii) of Theorem \ref{thm:linearproc} is now completed 
by applying Lemma \ref{lem:improve}.




\subsection{Auxiliary results used  in the proof of Theorem \ref{thm:linearproc}(ii)}\label{sec:proof_ui_lemmas}
\begin{lemma}\label{lem:V(s)lipschitz}
Define $V(s)= w^{-1} \tilde H(s)^{-1} w^{-1}$ and $V'(s)=\frac{\partial}{\partial s}V(s).$
It holds that $s\mapsto V'(s)$ is Lipschitz on $[0,T]$ and $V'(s) \to -\I$ as $s\uparrow T$. 
\end{lemma}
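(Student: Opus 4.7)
The plan is to compute $V'$ explicitly from the integral representation
$$\tilde H(s)^{-1}=\int_s^T \Phi(s,\tau)\,\tilde a(\tau)\,\Phi(s,\tau)^\T \dd\tau$$
established in Lemma~\ref{lemma:relation_H-r}. Since $\Phi(s,\tau)=\Phi(s)\Phi(\tau)^{-1}$ and $\Phi'(s)=\tilde B(s)\Phi(s)$, differentiation in the first argument gives $\tfrac{\partial}{\partial s}\Phi(s,\tau)=\tilde B(s)\Phi(s,\tau)$. Using this together with the boundary contribution $\Phi(s,s)=\I$ and Leibniz's rule (differentiation under the integral is justified since the integrand and its $s$-partial are continuous on the compact set $\{0\le s\le \tau\le T\}$), I obtain
$$V'(s)=w^{-1}\bigl[-\tilde a(s)+\tilde B(s)\tilde H(s)^{-1}+\tilde H(s)^{-1}\tilde B(s)^\T\bigr]w^{-1}.$$

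The limit at $T$ is then immediate. As $s\uparrow T$, the bound $\|\tilde H(s)^{-1}\|\lesssim T-s$ (already used in the proof of Lemma~\ref{lem:Hbound}) gives $\tilde H(s)^{-1}\to 0$, while $\tilde a(s)\to \tilde a(T)=a(T,v)=w^2$ by continuity of $\tilde a$ and the assumption $\tilde a(T)=a(T,v)$. Hence
$$V'(s)\longrightarrow w^{-1}(-w^2)w^{-1}=-\I.$$

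It remains to show $s\mapsto V'(s)$ is Lipschitz on $[0,T]$. The three ingredients in the bracket above are each Lipschitz on the compact interval $[0,T]$: $\tilde a=\tilde\sigma\tilde\sigma^\T$ is Lipschitz because $\tilde\sigma$ is bounded and Lipschitz, $\tilde B$ is continuously differentiable on the compact interval and hence Lipschitz, and $\tilde H(\cdot)^{-1}$ is Lipschitz because the derivative formula just obtained is bounded on $[0,T]$ (using once more $\|\tilde H(s)^{-1}\|\lesssim T-s$ and the boundedness of $\tilde a$ and $\tilde B$). Products and sums of bounded Lipschitz matrix-valued functions remain Lipschitz, and left/right multiplication by the constants $w^{-1}$ does not affect this, yielding the claim.

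The proof is essentially a bookkeeping exercise; there is no genuine technical obstacle. The only point that uses the hypothesis of part~(ii) of Theorem~\ref{thm:linearproc} in an essential way is the identification $\tilde a(T)=w^2$, which is exactly what makes $V'(T)$ equal to $-\I$ rather than to some other invertible matrix — this matching is what will later allow the sharp Gronwall-type argument in Lemma~\ref{lem:Delta1bound} to go through.
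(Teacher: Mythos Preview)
Your proof is correct and follows essentially the same route as the paper: both differentiate the integral representation of $\tilde H(s)^{-1}$ to obtain the identical formula $V'(s)=w^{-1}\bigl[-\tilde a(s)+\tilde B(s)\tilde H(s)^{-1}+\tilde H(s)^{-1}\tilde B(s)^\T\bigr]w^{-1}$, and then read off the Lipschitz property and the limit $-\I$. Your justification of the Lipschitz step is in fact more explicit than the paper's (which simply invokes the uniform boundedness of $\Phi(s,\tau)$); the only minor slip is in your closing remark, where the lemma that actually consumes this result is Lemma~\ref{lem:Delta2bound}, not Lemma~\ref{lem:Delta1bound}.
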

\begin{proof}
By equation \eqref{eq:H-linear}
\[ 
\Phi(T,s) \tilde H(s)^{-1} \Phi(T,s)^\T= \int_s^T \Phi(T,\tau) \tilde a(\tau) \Phi(T,\tau)^\T \dd \tau. \]
Taking the derivative with respect to $s$ on both sides and reordering terms gives
\[ \frac{\partial}{\partial s} \tilde H(s)^{-1} = -\tilde a(s) +\tilde B(s) \tilde H(s)^{-1} + \tilde H(s)^{-1} \tilde B(s)^\T, \]
and hence $V'(s)= w^{-1} \left(-\tilde a(s) +\tilde B(s) \tilde H(s)^{-1} + \tilde H(s)^{-1} \tilde B(s)^\T\right) w^{-1}.$ Since $\|\Phi(s,\tau)\| \le C$ for all $s, \tau \in [0,T]$, it follows that $s\mapsto V'(s)$ is Lipschitz on $[0,T]$. Furthermore,  $V'(s) \to -w^{-1}a(T) w^{-1}=-\I$, as $s\uparrow T$.  
\end{proof}


\begin{lemma}\label{lem:sde-rtilde}
We have 
\begin{align*}\dd \tilde{r}(s,X^\circ_s)&=-\tilde{H}(s) \si(s,X^\circ_s) \dd W_s\\ &  \qquad +    \tilde{H}(s)\left(\tilde{b}(s,X^\circ_s)-b(s,X^\circ_s)\right)\dd s \\& \qquad+ \left(\tilde{H}(s) \left(\tilde{a}(s)-a(s,X^\circ_s)\right)- \tilde{B} \right) \tilde{r}(s,X^\circ_s) \dd s, \end{align*}
where $\tilde{B}=D\tilde{b}$.
\end{lemma}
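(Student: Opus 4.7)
The plan is to apply the Itô chain rule to $\tilde r(s, X^\circ_s)$, exploiting the special structure that comes with $\tilde X$ being linear. In this setting Lemma \ref{lemma:relation_H-r} tells us $\tilde r(s,x) = \tilde H(s)(v(s) - x)$ is \emph{affine} in $x$, so the Hessian in $x$ vanishes and $\DD \tilde r = -\tilde H(s)$ is independent of $x$. Consequently Itô's formula collapses to
\[
d\tilde r(s, X^\circ_s) = \partial_s \tilde r(s, X^\circ_s)\,ds \;-\; \tilde H(s)\,dX^\circ_s.
\]
Substituting the SDE for $X^\circ$ with drift $b^\circ = b + a\tilde r$ and dispersion $\sigma$ already produces the martingale part $-\tilde H(s)\sigma(s,X^\circ_s)\,dW_s$ together with the contributions $-\tilde H(s)b(s,X^\circ_s)\,ds$ and $-\tilde H(s)a(s,X^\circ_s)\tilde r(s,X^\circ_s)\,ds$. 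It then remains to compute $\partial_s \tilde r$ and verify that the remaining drift combines into $\tilde H(s)\tilde b(s,X^\circ_s) - \tilde B\,\tilde r(s,X^\circ_s) + \tilde H(s)\tilde a(s)\tilde r(s,X^\circ_s)$.

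For $\partial_s \tilde r$ I would differentiate $\tilde r(s,x) = \tilde H(s)(v(s) - x)$ and plug in two auxiliary calculations. The first is the formula for $\tilde H'(s)$: starting from the identity $\partial_s \tilde H^{-1}(s) = -\tilde a(s) + \tilde B(s)\tilde H^{-1}(s) + \tilde H^{-1}(s)\tilde B(s)^\T$ derived inside Lemma \ref{lem:V(s)lipschitz} and using $\tilde H' = -\tilde H(\partial_s \tilde H^{-1})\tilde H$, one obtains
\[
\tilde H'(s) = \tilde H(s)\tilde a(s)\tilde H(s) - \tilde H(s)\tilde B(s) - \tilde B(s)^\T \tilde H(s).
\]
The second is $v'(s)$: differentiating \eqref{def_v} and using $\partial_s \Phi(s,\tau) = \tilde B(s)\Phi(s,\tau)$, the integral terms cancel and leave the clean expression $v'(s) = \tilde B(s) v(s) + \tilde \beta(s) = \tilde b(s, v(s))$.

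Combining $\partial_s \tilde r = \tilde H'(s)(v(s) - x) + \tilde H(s)v'(s)$ with the two displays above and writing $v(s) - x$ back in terms of $\tilde r = \tilde H(v - x)$ gives, after the terms $\tilde H(s)\tilde B(s) v(s)$ cancel,
\[
\partial_s \tilde r(s,x) = \tilde H(s)\tilde a(s)\tilde r(s,x) - \tilde B(s)^\T \tilde r(s,x) + \tilde H(s)\tilde b(s,x).
\]
Adding this to the $-\tilde H(s)[\,b + a\tilde r\,]\,ds - \tilde H(s)\sigma\,dW_s$ obtained from the chain rule and grouping the $\tilde r$-linear terms yields exactly the claimed SDE, with $\tilde B$ in its statement identified (via the paper's $\DD$-convention) as $D\tilde b = \tilde B(s)^\T$.

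The main obstacle is purely bookkeeping: the paper's convention $(\DD f)_{ij} = \partial f_j/\partial x_i$ makes $\DD(Ax) = A^\T$, so one has to be careful that the matrix $\tilde B$ appearing in the lemma's statement is the transpose of the SDE coefficient, and that the signs/transposes line up when $\tilde H'(s)$ is substituted. The rest is genuine one-step verification; no probabilistic estimates are needed here since boundedness and existence of the derivatives $\tilde H'(s)$, $v'(s)$ on $[0,T)$ have already been established.
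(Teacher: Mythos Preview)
Your argument is correct and complete, but it takes a different route from the paper's proof. Both proofs start identically: since $\tilde r(s,x)=\tilde H(s)(v(s)-x)$ is affine in $x$, the It\^o correction vanishes and $d\tilde r(s,X^\circ_s)=\partial_s\tilde r\,ds-\tilde H(s)\,dX^\circ_s$. The divergence is in how $\partial_s\tilde r$ is obtained. The paper differentiates the identity of Lemma~\ref{lem:time}, i.e.\ $\partial_s\tilde R=-\tilde{\mathcal L}\tilde R-\tfrac12\tilde r^\T\tilde a\tilde r$, with respect to $x$; this yields $\partial_s\tilde r=-\tilde B\tilde r+\tilde H\tilde b+\tilde H\tilde a\tilde r$ in one line. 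You instead differentiate the explicit formula $\tilde r=\tilde H(s)(v(s)-x)$ directly, importing the expression for $\partial_s\tilde H^{-1}$ from Lemma~\ref{lem:V(s)lipschitz} and computing $v'(s)=\tilde B(s)v(s)+\tilde\beta(s)$ from \eqref{def_v}. The cancellation of the $\tilde H(s)\tilde B(s)v(s)$ terms you point out is exactly right, and your final identification $D\tilde b=\tilde B(s)^\T$ via the paper's $\DD$-convention is the correct reading of the symbol $\tilde B$ in the lemma statement.

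The paper's approach is conceptually cleaner and would generalise beyond the linear case (Lemma~\ref{lem:time} holds for any $\tilde X$ satisfying the backward equation), whereas your approach is specific to the linear structure but has the virtue of being a purely explicit calculation that avoids invoking the Kolmogorov PDE. One cosmetic point: when you write ``the integral terms cancel'' in the computation of $v'(s)$, what actually happens is that the Leibniz boundary term gives $\tilde\beta(s)$ and the remaining integral combines with $\partial_s\Phi(s,T)v$ to produce $\tilde B(s)v(s)$; the result $v'(s)=\tilde b(s,v(s))$ is correct.
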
 
\begin{proof}
In the proof, we will omit dependence on $s$ and $X^\circ_s$ in the notation.
By It\=o's formula
\begin{equation}\label{tilder}  \dd \tilde{r}= \frac{\partial}{\partial s}\tilde{r} \dd s - \tilde{H} d X^\circ. \end{equation}
For handling the second term we  plug-in the expression for $X^\circ$ from its defining stochastic differential equation. This gives
\begin{equation}\label{tilder2}	 \tilde{H} \dd X^\circ = \tilde{H}b  \dd s +\tilde{H} a \tilde{r} \dd s + \tilde{H} \sigma \dd W. \end{equation}
 For the first term, we  compute the derivative of $\tilde{r}(s,x)$ with respect to $s$. For this, we note that  by  
 Lemma \ref{lem:time} $\frac{\partial }{\partial s} \tilde{R} = -\tilde\Ell \tilde{R}-\frac12 \tilde{r}^\T\tilde{a}\tilde{r}$,
with
$\tilde{\Ell}\tilde{R} = \tilde{b}^\T \tilde{r}-\frac12 \trace\left(\tilde{a}\tilde{H}\right).$
Next, we take $D$ on both sides of this equation.
Since we assume $\tilde{R}(s,x)$ is differentiable in $(s,x)$ we have $D\left((\partial/\partial s) \tilde R\right)= (\partial/\partial s) \tilde{r}$. Further,
$D\left(\tilde{\Ell} \tilde{R}\right) = \tilde{B} \tilde{r} - \tilde{H} \tilde{b}$ 
and $D\left(\frac12 \tilde{r}^\T\tilde{a}\tilde{r}\right) = - \tilde{H} \tilde{a} \tilde{r}.$
Therefore, $\frac{\partial}{\partial s} \tilde{r}= -\tilde{B}\tilde{r}+\tilde{H}\tilde{b}+\tilde{H}\tilde{a}\tilde{r}.$ Plugging this expression together with (\ref{tilder2}) into equation \eqref{tilder} gives the result.
\end{proof}

\begin{lemma}\label{lem:Delta1bound} 
 There  exists an $\eps_0 \in (0,1/2)$   such that for $0\le s < T$, $x, y \in \RR^d$
\begin{equation*}\label{deltapos}
y^\T\Delta_1(s,x)y \le \left(\frac{1- \eps_0}{T-s} \right) \norm{y}^2, 
\end{equation*}
with $\Delta_1$ as defined in \eqref{eq:def:Delta1}.
\end{lemma}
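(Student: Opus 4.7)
The plan is to unwind the definition of $\Delta_1$ and reduce the inequality to a lower bound of the form $w^{-1}a(s,x)w^{-1} \succeq \eps_0 \I$. By definition
\[
y^\T \Delta_1(s,x) y = \frac{1}{T-s}\bigl(\|y\|^2 - y^\T w^{-1} a(s,x) w^{-1} y\bigr),
\]
so the desired inequality is equivalent to the existence of $\eps_0 \in (0,1/2)$ with
\[
y^\T w^{-1} a(s,x) w^{-1} y \ge \eps_0 \|y\|^2 \qquad \text{for all } s\in[0,T),\, x,y\in\RR^d.
\]

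First I would verify that $w$ is well defined and $w^{-1}$ is bounded. By the uniform ellipticity assumption \eqref{eq:ellipt}, $a(T,v)$ is positive definite, hence its symmetric positive definite square root $w$ is invertible and $\|w^{-1}\|<\infty$. Next, applying \eqref{eq:ellipt} to the vector $w^{-1}y$ in place of $y$,
\[
y^\T w^{-1} a(s,x) w^{-1} y \;\ge\; \eps \|w^{-1} y\|^2 \;\ge\; \frac{\eps}{\|w\|^2}\,\|y\|^2,
\]
where the second inequality uses $\|y\| = \|w w^{-1} y\| \le \|w\|\,\|w^{-1}y\|$. Writing $\eta = \eps/\|w\|^2 > 0$ and choosing
\[
\eps_0 = \min\!\bigl(\eta,\tfrac{1}{3}\bigr) \in (0,\tfrac{1}{2}),
\]
the chain of inequalities above yields
\[
y^\T\Delta_1(s,x) y \le \frac{1-\eps_0}{T-s}\|y\|^2,
\]
which is exactly the claim.

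There is essentially no obstacle here; the entire argument is a one-step consequence of uniform ellipticity and the existence of $w^{-1}$, with the factor $\|w\|^{-2}$ accounting for the change of basis between $a$ and $w^{-1}aw^{-1}$. The only minor point worth noting is the requirement $\eps_0 < 1/2$ in the statement: this is a harmless convenience, since any positive lower bound $\eta$ for the eigenvalues of $w^{-1}a(s,x)w^{-1}$ can be truncated to lie below $1/2$ without affecting the inequality.
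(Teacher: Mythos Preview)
Your proof is correct and follows essentially the same route as the paper: both apply the uniform ellipticity assumption \eqref{eq:ellipt} to the vector $w^{-1}y$ and then bound $\|w^{-1}y\|^2$ from below by a constant times $\|y\|^2$. Your constant $1/\|w\|^2$ is in fact identical to the paper's $\eps'$, since the smallest eigenvalue of $\tilde a(T)^{-1}=w^{-2}$ equals $1/\|w\|^2$; the only cosmetic difference is that the paper phrases the last step via positive definiteness of $\tilde a(T)^{-1}$ while you phrase it via the operator-norm inequality $\|y\|\le\|w\|\,\|w^{-1}y\|$.
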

\begin{proof}
Let  $y\in \RR^d$. By \eqref{eq:ellipt} there is $\eps > 0$ such that
\[
y^\T\Delta_1(s,x)y  =  y^\T  \left(\frac1{T-s} \right) (\I - w^{-1} a(s,x)w^{-1})  y \le 
\left(\frac1{T-s} \right)\left( y^\T y-\eps y^\T \tilde{a}(T)^{-1} y \right).
\]
Since $\tilde{a}(T)=a(T,v)$ is positive definite, its inverse is positive definite as well. Hence, there exists a $\eps'>0$ such that $y^\T \tilde{a}(T)^{-1} y \ge \eps' \|y\|^2$. This gives  $y^\T\Delta_1(s,x)y \le \frac{1-\eps\eps'}{T-s} \|y\|^2.$ Let $\eps_0=\eps\eps'$. We can take $\eps$ sufficiently small such that $\eps_0 \in (0,1/2)$. 
\end{proof}

\begin{lemma}\label{lem:Delta2bound}
Let $\tilde{J}(s)=w \tilde{H}(s) w$.  
There exists a $C > 0$ such that 
\[\left\|\tilde{J}(s)- \frac{1}{T-s}\I\right\| < C \quad\text{for all $s < T$.}\]

\end{lemma}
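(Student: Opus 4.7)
My plan is to exploit the identification $\tilde J(s) = V(s)^{-1}$, where $V(s) = w^{-1}\tilde H(s)^{-1}w^{-1}$ is the object studied in Lemma \ref{lem:V(s)lipschitz}, and to reduce the claim to a quantitative statement that $V(s)$ behaves like $(T-s)\I$ as $s \uparrow T$ to second order.

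First, note that for $s$ varying over any compact subinterval of $[0,T)$ the map $s\mapsto \tilde J(s)$ is continuous (hence bounded) and $\frac{1}{T-s}$ is also bounded, so the claim is trivial away from $T$. The nontrivial regime is $s \uparrow T$, where both $\tilde J(s)$ and $\frac{1}{T-s}\I$ diverge but their difference should not.

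For that regime, I use $V(T)=0$ (directly from $\tilde H(s)^{-1}=\int_s^T \Phi(s,\tau)\tilde a(\tau)\Phi(s,\tau)^\T\dd\tau$) and $V'(T)=-\I$ together with the Lipschitz property of $V'$ given by Lemma \ref{lem:V(s)lipschitz}. Writing $V(s) = -\int_s^T V'(\tau)\dd\tau$ and subtracting $(T-s)\I = \int_s^T \I\dd\tau$, I get
\[
V(s) - (T-s)\I \;=\; -\int_s^T\bigl(V'(\tau)+\I\bigr)\dd\tau .
\]
Since $V'(T)+\I=0$ and $V'$ is Lipschitz on $[0,T]$, $\|V'(\tau)+\I\|\lesssim T-\tau$, so $\|V(s)-(T-s)\I\|\lesssim (T-s)^2$.

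Now I convert this into a bound for the inverse via the identity
\[
\tilde J(s) - \frac{1}{T-s}\I \;=\; V(s)^{-1} - \frac{1}{T-s}\I \;=\; \frac{1}{T-s}\,V(s)^{-1}\bigl[(T-s)\I - V(s)\bigr].
\]
By Lemma \ref{lem:Hbound} the eigenvalues of $(T-s)\tilde H(s)$ are bounded above by $1/c$, uniformly in $s<T$; applying this through the fixed invertible matrix $w$ gives $\|V(s)^{-1}\| = \|\tilde J(s)\|\lesssim (T-s)^{-1}$. Combining with the $O((T-s)^2)$ bound above yields
\[
\Bigl\|\tilde J(s) - \frac{1}{T-s}\I\Bigr\| \;\lesssim\; \frac{1}{T-s}\cdot\frac{1}{T-s}\cdot(T-s)^2 \;=\; O(1),
\]
as required. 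The main obstacle is essentially bookkeeping: one needs the $O((T-s)^2)$ estimate (not merely $o(T-s)$) on $V(s)-(T-s)\I$ to cancel the two factors of $(T-s)^{-1}$ coming from the inversion, and this is exactly why the Lipschitz property of $V'$, together with $V'(T)=-\I$, in Lemma \ref{lem:V(s)lipschitz} is invoked rather than mere continuity.
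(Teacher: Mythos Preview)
Your proof is correct and follows essentially the same route as the paper: both arguments use the factorization $\tilde J(s)-\frac{1}{T-s}\I=\frac{1}{T-s}\tilde J(s)\bigl[(T-s)\I-V(s)\bigr]$, obtain $\|(T-s)\I-V(s)\|\lesssim (T-s)^2$ from $V(T)=0$, $V'(T)=-\I$ and the Lipschitz property of $V'$ in Lemma~\ref{lem:V(s)lipschitz}, and then close with the bound $\|\tilde J(s)\|\lesssim (T-s)^{-1}$ from Lemma~\ref{lem:Hbound}. Your extra remark that the statement is trivial on compact subintervals of $[0,T)$ is a harmless (and arguably clarifying) addition.
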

\begin{proof}
We have
\begin{equation}\label{eq:tildeJtermproof}	
\left\|\tilde{J}(s)- \frac{1}{T-s}\I\right\|  \le \frac1{T-s} \left\| \tilde{J}(s)\right\| \left\| (T-s) \I - \tilde{J}^{-1}(s)\right\|. 
\end{equation}
Let $\tilde{V}(s)=\tilde{J}(s)^{-1}$ and $\tilde{V}'(s)=  \frac{\partial}{\partial s}\tilde{V}(s)$. Since $\tilde{V}(T)=0$ and  $\tilde{V}'(T)=-\I$ (see Lemma \ref{lem:V(s)lipschitz}) we can write 
\[ (T-s) \I -\tilde{V}(s)=-\int_s^T \tilde{V}'(T) + \int_s^T \tilde{V}'(h) \dd  h. \]
By Lemma \ref{lem:V(s)lipschitz}, $s\mapsto \tilde{V}'(s)$ is Lipschitz on $[0,T]$ and therefore
\[ \left\|  (T-s) \I -\tilde{V}(s) \right\| \lesssim  \int_s^T (T-h) \dd h =(T-s)^2/2. \]
Substituting the derived bound into  \eqref{eq:tildeJtermproof} gives
\[	\left\|\tilde{J}(s)- \frac{1}{T-s}\I\right\|  \lesssim (T-s) \left\| \tilde{J}(s)\right\| \lesssim (T-s) \left\| \tilde{H}(s)\right\| \lesssim 1. \]
The last inequality follows from Lemma \ref{lem:Hbound}. 
\end{proof}

\begin{lemma}
\label{lem:SidedPsiBound}
Let  $\Psi(t)$ be the principal fundamental matrix at 0 for the random homogeneous linear system
\begin{equation}
\dd \Psi(s)=  A(s)  \Psi(s)\dd s,\qquad \Psi(0)=I.
 \end{equation}
Suppose that the matrix function $A(s)$ is of the form $A(s) = A_1(s) + A_2(s)$, where  both $A_1$ and $A_2$ are continuous on $[0, T)$. Assume  $A_2$ is bounded and $A_1$ is such that there are  $\eps_0  \in (0,1/2)$ and  $C_1>0$  that for all $s  \in [0, T)$ and vectors $y$
\[  y^\T A_1(s) y \le \left(\frac{1-\eps_0}{T-s} + C_1\right) \norm{y}^2. \]
Then there is a $C > 0$ such that for all $0\le s\le t<T$ 
\[ \| \Psi(t) \Psi(s)^{-1}\| \le C \left(\frac{T-s}{T-t}\right)^{1-\eps_0}. \]
\end{lemma}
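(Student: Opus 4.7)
The plan is a standard Gronwall/logarithmic-derivative argument for the quadratic norm along trajectories. Fix $s \in [0,T)$ and an arbitrary $y_0 \in \RR^d$, and set $y(t) = \Psi(t)\Psi(s)^{-1} y_0$. Since $\Psi$ is a fundamental matrix for the linear system driven by $A(t) = A_1(t)+A_2(t)$, the curve $y$ satisfies $y'(t) = A(t)y(t)$ with $y(s) = y_0$. This reduces the operator-norm bound to an estimate of the growth of $\|y(t)\|$, because if I can show $\|y(t)\| \le C (\tfrac{T-s}{T-t})^{1-\eps_0}\|y(s)\|$ uniformly over $y_0$, taking the sup over unit $y_0$ yields the claim.

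Next I compute the logarithmic derivative of the squared norm:
\[
\frac{d}{dt}\|y(t)\|^2 = 2\, y(t)^\T A(t) y(t) = 2\, y(t)^\T A_1(t) y(t) + 2\, y(t)^\T A_2(t) y(t).
\]
The hypothesis on $A_1$ bounds the first quadratic form by $2\bigl(\tfrac{1-\eps_0}{T-t}+C_1\bigr)\|y(t)\|^2$. For the second, I use Cauchy--Schwarz and the assumed boundedness of $A_2$ to obtain $|y^\T A_2(t) y| \le \|A_2(t)\|\,\|y\|^2 \le C_2\|y\|^2$, where $C_2 = \sup_{t\in[0,T)}\|A_2(t)\|$. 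Combining,
\[
\frac{d}{dt}\log\|y(t)\|^2 \le 2\left(\frac{1-\eps_0}{T-t} + C_1 + C_2\right), \qquad s \le t < T.
\]

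Integrating from $s$ to $t$ gives
\[
\log\frac{\|y(t)\|^2}{\|y(s)\|^2} \le 2(1-\eps_0)\log\frac{T-s}{T-t} + 2(C_1+C_2)(t-s),
\]
and exponentiating, then taking the square root, yields $\|y(t)\| \le e^{(C_1+C_2)T}\bigl(\tfrac{T-s}{T-t}\bigr)^{1-\eps_0}\|y(s)\|$. Since $y_0 = y(s)$ was arbitrary, this is exactly the operator-norm bound $\|\Psi(t)\Psi(s)^{-1}\| \le C\bigl(\tfrac{T-s}{T-t}\bigr)^{1-\eps_0}$ with $C = e^{(C_1+C_2)T}$.

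There is no real obstacle here; the only thing to verify carefully is that $y\mapsto \|y(t)\|^2$ is differentiable along solutions (trivial since $y$ is $C^1$ in $t$), that the quadratic-form hypothesis applies to $A_1$ itself (not just its symmetric part — which is what $y^\T A_1 y$ depends on anyway), and that the integrand $\tfrac{1}{T-t}$ is integrable on $[s,t]$ for every $t<T$ so that the computation makes sense up to (but not including) $t=T$.
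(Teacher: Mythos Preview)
Your proof is correct and is essentially the same argument as the paper's: track $\|y(t)\|^2$ for $y(t)=\Psi(t)\Psi(s)^{-1}y_0$, bound its derivative via the quadratic-form hypothesis on $A_1$ and the boundedness of $A_2$, and integrate. The paper phrases the last step as a Gronwall inequality on $Z(t)^\T Z(t)$ rather than passing to $\log\|y(t)\|^2$, but the computation and the resulting constant $C=e^{(C_1+C_2)T}$ are identical; the only cosmetic point is that your logarithmic derivative step implicitly uses $y(t)\neq 0$, which follows from invertibility of $\Psi(t)\Psi(s)^{-1}$ (and the case $y_0=0$ is trivial).
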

\begin{proof}

For $z \in \RR^d$, let $Z(t) = \Psi(t) z$, so $\dd Z(t) = (A_1(t) + A_2(t))Z(t) \dd t$. Let $\norm{A_2(t)} \le  C_2$ (say).
Integrating  $\dd [Z(u)^\T Z(u)] = \dd [Z(u)^\T] Z(u) + Z(u)^\T [\dd Z(u)] =
Z(u)^\T (A_1 + A_2 + A_1^\T + A_2^\T) Z(u) \dd u$ over $[s,t]$ yields
\begin{align*}
Z(t)^\T Z(t) &= Z(s)^\T Z(s) + \int_s^t  Z(h)^\T (A_1(h) + A_1(h)^\T) Z(h) \dd h\\&\quad +   \int_s^t Z(h)^\T ( A_2(h)+ A_2(h)^\T)Z(h) \dd h \\ &
\le Z(s)^\T Z(s) +  \int_s^t 2 \left(\frac{1-\eps_0}{T-h} + C_1 + C_2\right) Z(h)^\T Z(h) \dd h. 
\end{align*}
From Gronwall's lemma,
\[\norm{Z_t}^2 \le \norm{Z_s}^2   \exp\left(2\int_s^t  \frac{1-\eps_0}{T-u} \dd u + 2(t-s)(C_1 + C_2) \right).
\]
Let $z = \Psi(s)^{-1} x$. For any $x$ with $\norm{x} \le 1$ this implies \[ \|\Psi(t)\Psi(s)^{-1} x \|  \le\norm{\Psi(s)\Psi(s)^{-1}x}\left(\frac{T-s}{T-t}\right)^{1-\eps_0 }\e^{(t-s)(C_1+C_2)}\]
or $\|\Psi(t)\Psi(s)^{-1} \|  \le \e^{T(C_1+C_2)} \left (\frac{T-s}{T-t}\right)^{1-\eps_0}.$
\end{proof}


\begin{lemma}\label{lem:linsol}
Suppose $Y$ is a strong solution of the  stochastic differential equation $\dd Y_t = \alpha_t \dd W_t+ (\beta_t + \gamma_t Y_t) \dd t,$ where $\alpha_t=\alpha(t,Y_t)$, $\beta_t=\beta(t,Y_t)$ and $\ga_t=\gamma(t,Y_t)$. Let  $\Psi$ be the matrix solution to
$\dd \Psi(t)=\ga_t \Psi(t) \dd t$, $\Psi(0)=\I$ and define the process $Y'$ by
\[ Y'_t = \Psi(t) \left[ Y_0 + \int_0^t \Psi(h)^{-1} \beta_h \dd h + \int_0^t \Psi^{-1} \alpha_h d W_h\right].\]
If $\sup_{s\le \tau} \|\ga_s\|<\infty$,  then $Y$ and $Y'$ are indistinguishable on  $[0,\tau]$.
\end{lemma}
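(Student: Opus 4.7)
The plan is to verify that $Y'$ satisfies the same linear SDE as $Y$ with the same initial condition, and then invoke a uniqueness argument. Concretely, write
\[
Y'_t = \Psi(t) U_t, \qquad U_t := Y_0 + \int_0^t \Psi(h)^{-1} \beta_h \dd h + \int_0^t \Psi(h)^{-1} \alpha_h \dd W_h.
\]
Since $\Psi$ solves an ODE in $t$ and is therefore of finite variation (with no martingale part and no quadratic covariation with $U$), the Itô product rule reduces to the classical product rule
\[
\dd Y'_t = (\dd \Psi(t)) U_t + \Psi(t) \dd U_t.
\]
First I would check that $\Psi(t)$ is well-defined and invertible on $[0,\tau]$: boundedness of $\gamma_s$ on $[0,\tau]$ yields existence and uniqueness of the matrix ODE, and Liouville's formula (or a direct argument that $\Psi(t)^{-1}$ solves $\dd \Phi = -\Phi \gamma_t \dd t$) gives invertibility, so $U$ is well-defined.

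Next I would substitute $\dd \Psi(t) = \gamma_t \Psi(t) \dd t$ and the expression for $\dd U_t$ to obtain
\[
\dd Y'_t = \gamma_t \Psi(t) U_t \dd t + \Psi(t) \Psi(t)^{-1} \beta_t \dd t + \Psi(t) \Psi(t)^{-1} \alpha_t \dd W_t = \alpha_t \dd W_t + (\beta_t + \gamma_t Y'_t) \dd t,
\]
with $Y'_0 = \Psi(0) U_0 = Y_0$. The key observation is that in the definition of $Y'$ the coefficients $\alpha_h, \beta_h, \gamma_h$ are evaluated along the given process $Y$, not along $Y'$, so for the purpose of this comparison they are merely fixed adapted processes, and the above calculation shows that both $Y$ and $Y'$ are strong solutions on $[0,\tau]$ of the affine-in-$Z$ SDE
\[
\dd Z_t = \alpha_t \dd W_t + (\beta_t + \gamma_t Z_t)\dd t, \qquad Z_0 = Y_0.
\]

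Finally I would invoke pathwise uniqueness for this linear SDE: since the drift as a function of $Z$ is $\gamma_t Z$ and $\sup_{s\le \tau}\|\gamma_s\| < \infty$, it is globally Lipschitz in $Z$ with an adapted bounded Lipschitz coefficient, and the diffusion term does not depend on $Z$. A standard Gronwall argument applied to $\EE\|Z_t - Z'_t\|^2$ (or a localized pathwise version if needed, using the stopping time $\inf\{t : \|\gamma_t\| > N\}$ to truncate and then letting $N \to \infty$) then forces $Y_t = Y'_t$ for all $t \in [0,\tau]$ almost surely, i.e.\ the processes are indistinguishable. The main subtlety in this step is the boundedness hypothesis on $\gamma$: without it, $\Psi$ might not be well-defined and the Gronwall step fails, which is why the assumption $\sup_{s\le\tau}\|\gamma_s\|<\infty$ is needed and where one should be careful. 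Everything else is essentially a routine application of the product rule and uniqueness for linear SDEs.
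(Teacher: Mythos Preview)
Your proposal is correct and follows essentially the same two-step structure as the paper: verify that $Y'$ satisfies the same SDE and then conclude by a Gronwall-type uniqueness argument. The paper differs only in minor technical choices: it verifies the SDE for $Y'$ via the stochastic Fubini theorem rather than the product rule, and for uniqueness it simply observes that since the martingale parts of $Y$ and $Y'$ are identical, the difference $Y'-Y$ satisfies pathwise the ODE $Y'_t-Y_t=\int_0^t \gamma_s(Y'_s-Y_s)\,\dd s$, so a direct pathwise Gronwall suffices without any second moments or localization.
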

\begin{proof}
By computing $\int_0^t \ga_s Y'(s) \dd s$ and using the (stochastic) Fubini theorem it is easy to verify that $Y'$ satisfies the stochastic differential equation
\[ \dd Y'_t= \alpha_t \dd W_t+ (\beta_t + \gamma_t Y'_t) \dd t.\]
This implies $Y'_s-Y_s= \int_0^t \ga_s(Y'_s-Y_s) \dd s$ and thus
\[ \sup_{s\le t} \|Y'_s-Y_s\| \le \max_{s\le t} \|\ga_s\| \int_0^t \sup_{h\le s} \|Y'_h-Y_h\| \dd s.\]
By Gronwall's lemma $\sup_{s\le t }\|Y'_s-Y_s\| \le 0,$
which concludes the proof.
\end{proof}

\begin{lemma}\label{lem:integ}
Define $M_t = \Psi(t) \int_0^t \Psi(s)^{-1} U(s) \dd W_s,$ where $\Psi$ satisfies 
$\dd \Psi(s) = A(s) \Psi(s) \dd s$ and $\Psi(0)=\I.$ Assume $(T-s)\|U(s)\| \lesssim 1$ for $s\in [0,T)$. Assume that the assumptions of Lemma \ref{lem:SidedPsiBound} hold with $\eps_0 \in (0,1/2)$ and additionally
that there are constants $C_1, C_2 >0$ such that for all $0\le s<T$
\begin{equation}
\label{eq:A(s)} 
	\|A(s)\| \le C_1 \frac{1}{T-s} + C_2. 
\end{equation}
Then there exists an a.s. finite random variable $N$ such that for all $0\le s<T$ $\|M_s\| \le  (T-s)^{\eps_0-1} N.$
\end{lemma}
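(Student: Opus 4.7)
The approach is to derive a stochastic differential equation for $M_t$ directly and then analyse $\|M_t\|^2$ after a scaling that cancels the singular drift. Differentiating $M_t = \Psi(t)\int_0^t \Psi(s)^{-1} U(s)\,\dd W_s$ and using $\dd \Psi(t) = A(t)\Psi(t)\,\dd t$, one finds that $M$ satisfies the linear SDE $\dd M_t = A(t)\,M_t\,\dd t + U(t)\,\dd W_t$ with $M_0=0$. It\^o's formula applied to $V(t) = \|M_t\|^2$, together with the quadratic bound $y^\T A(s) y \le \bigl((1-\eps_0)/(T-s) + c_0\bigr)\|y\|^2$ inherited from the hypotheses on $A_1$ and $A_2$ in Lemma~\ref{lem:SidedPsiBound} and the assumption $(T-s)\|U(s)\| \lesssim 1$, yields
\[
\dd V(t) \le 2\Bigl(\frac{1-\eps_0}{T-t}+c_0\Bigr) V(t)\,\dd t + 2 M_t^\T U(t)\,\dd W_t + c_1(T-t)^{-2}\,\dd t.
\]

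The crucial manipulation is the substitution $Y(t) = (T-t)^{2(1-\eps_0)} V(t)$: the derivative of the scaling factor cancels the singular $(1-\eps_0)/(T-t)$ term in the drift bound exactly, leaving
\[
\dd Y(t) \le 2 c_0 Y(t)\,\dd t + \dd L_t + c_2 (T-t)^{-2\eps_0}\,\dd t,
\]
where $L_t = 2\int_0^t (T-s)^{2(1-\eps_0)} M_s^\T U(s)\,\dd W_s$ is a continuous local martingale whose quadratic variation satisfies $\dd \qv{L}_s \lesssim Y(s)(T-s)^{-2\eps_0}\,\dd s$. Because $\eps_0 < 1/2$, the deterministic drift integrates to a finite constant $\kappa$ over $[0,T]$, and a pathwise application of Gronwall's inequality (replacing $L$ by its running supremum) gives $Y(t) \le e^{2 c_0 T}\bigl(\sup_{s \le t} L_s + \kappa\bigr)$ for all $t \in [0,T)$.

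It remains to show $L^\star_T := \sup_{t<T} L_t < \infty$ almost surely---the main obstacle, since the bound on $\qv{L}$ depends on $Y$ itself. I would close this circular dependence via the stopping time $\tau_R = \inf\{t : Y(t) \ge R\} \wedge T$: on $[0,\tau_R]$ the bound $\qv{L}_{t\wedge\tau_R} \le c_3 R$ holds uniformly in $t$, so $L_{\cdot\wedge\tau_R}$ is a square-integrable martingale with $\EE \sup_t L_{t\wedge\tau_R}^2 \le 4 c_3 R$ by Doob's inequality. On $\{\tau_R < T\}$ the Gronwall bound forces $\sup_t L_{t\wedge\tau_R} \ge R e^{-2c_0 T} - \kappa$, and Markov's inequality then yields $\P(\tau_R < T) = O(1/R) \to 0$ as $R \to \infty$. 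Hence $Y^\star = \sup_{t < T} Y(t)$ is a.s.\ finite, and taking $N = \sqrt{Y^\star}$ gives the stated bound $\|M_t\| \le N(T-t)^{\eps_0-1}$.
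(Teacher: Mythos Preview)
Your argument is correct and takes a genuinely different route from the paper. The paper introduces an auxiliary martingale $M^{(\gamma)}_t=\int_0^t (T-s)^{1-\gamma}U(s)\,\dd W_s$ with $\gamma\in(\eps_0,1/2)$, which has finite quadratic variation on $[0,T]$; it then recovers $M_t$ from $M^{(\gamma)}$ by integration by parts and controls the resulting deterministic integral via the bound $\|\Psi(t)\Psi(s)^{-1}\|\le C\bigl((T-s)/(T-t)\bigr)^{1-\eps_0}$ from Lemma~\ref{lem:SidedPsiBound}, so that $N$ is simply a constant times $\sup_t\|M^{(\gamma)}_t\|$. Your approach instead applies It\^o's formula directly to $\|M_t\|^2$, exploits the one-sided quadratic-form bound on $A$ to cancel the singularity after the scaling $Y(t)=(T-t)^{2(1-\eps_0)}\|M_t\|^2$, and then closes the self-referential bound on $\qv{L}$ with a stopping-time/Doob/Markov argument.

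What each buys: the paper's route cleanly separates the deterministic growth (handled once in Lemma~\ref{lem:SidedPsiBound}) from the stochastic part (a martingale with bounded quadratic variation on the closed interval), so the a.s.\ finiteness of $N$ is immediate from Dambis--Dubins--Schwarz without any bootstrap. Your route is more self-contained: it never invokes the conclusion of Lemma~\ref{lem:SidedPsiBound} nor the two-sided norm bound \eqref{eq:A(s)} on $A$, only the one-sided inequality on $y^\T A(s)y$, so in fact you prove the lemma under slightly weaker hypotheses. The price is the extra stopping-time step to break the circularity between $Y$ and $\qv{L}$, but that step is standard and you carry it out correctly.
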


\begin{proof} 
Let $\ga \in (\eps_0,1/2)$ and define
\begin{equation}\label{barM}
M^{(\gamma)}_t = \int_0^t (T-s)^{1-\gamma}U(s) \dd W_s,
\end{equation}
so that $M_t = \int_0^t (T-s)^{\ga-1} \Psi(t) \Psi(s)^{-1} \dd M^{(\gamma)}_s$. 

By partial integration,
\[ M_t= (T-t)^{\ga-1} M^{(\gamma)}_t
 - \Psi(t) \int_0^t M^{(\gamma)}_s \dd\left( (T-s)^{\ga-1} \Psi(s)^{-1}\right).\]
By straightforward algebra
the integral appearing on the right-hand-side can be simplified and we get
\[ M_t=(T-t)^{\ga-1} M^{(\gamma)}_t - \Psi(t) \int_0^t M^{(\gamma)}_s(T-s)^{\gamma -2} \Psi(s)^{-1}  \left[(1-\gamma) \I  - (T-s)A(s)\right] \dd s.\]
By equation \eqref{eq:A(s)}, $\norm{ (1-\gamma) \I  - (T-s)A(s) } \le 1 + C_1 + C_2(T-s)$. 
Therefore,
\begin{multline*}
 \|M_t\|   \le (T-t)^{\ga-1} \|M^{(\gamma)}_t\| +\\ \sup_{0\le s\le t} \|M^{(\gamma)}_s\|  \int_0^t (T-s)^{\ga-2} \|\Psi(t)\Psi(s)^{-1}\| \left( 1 + C_1 + C_2{(T-s)}\right) \dd s .
\end{multline*}
Using Lemma \ref{lem:SidedPsiBound}, the integral on the right-hand-side of the preceding display can be bounded by a positive constant times
\[ \int_0^t (T-s)^{\ga-2} \left(\frac{T-s}{T-t}\right)^{1-\eps_0} \dd s
= \left({T-t}\right)^{-1+\eps_0} \int_0^t (T-s)^{-1 + \ga -\eps_0} \dd s.
\]
From the choice $\ga>\eps_0$, this last integral is bounded.
So we obtain 
$\|M_t\| \le (T-t)^{\eps_0-1} N $, 
with $N= C\sup_{0\le t \le T} \|M_t^{(\gamma)}\|$ for some $C>0$.
It remains to show that $N$ is a.s.\ finite.
By the assumption on $U$, the quadratic variation of $M^{(\gamma)}$ satisfies, since $\gamma < 1/2$, 
\[
\Big\|\qv{M^{(\gamma)}}_T \Big\| \le \int_0^T \frac1{(T-s)^{2\gamma}}\,\dd s < \infty.
\]
Hence, the result follows from the Dambis-Dubins-Schwarz theorem.
\end{proof}

\begin{lemma}\label{lem:improve}

Let $f\, :\, [0,T) \to [0,\infty)$ be nondecreasing and bounded on any subinterval $[0, \tau]$, $\tau < T$. Suppose $\rho$ is integrable, continuous and nonnegative on $[0,T)$. If 
$$\rho(t) \le f(t) +  C\int_0^t \left(\rho(s) + \rho^2(s)\right) \dd s, \quad t \in [0,T)$$ for some positive constant $C$,  
  then 
$\rho \lesssim  f$ on $[0,T)$.
\end{lemma}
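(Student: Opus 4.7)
The plan is to rewrite the quadratic term so that standard Gronwall applies, then exploit the hypothesis that $\rho$ is already integrable on $[0,T)$ to conclude.

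First I would rewrite the hypothesis as
\[
\rho(t) \le f(t) + \int_0^t g(s)\,\rho(s)\,\dd s, \qquad g(s) := C(1+\rho(s)).
\]
Since $\rho$ is nonnegative, continuous, and integrable on $[0,T)$, the function $g$ is nonnegative and integrable on $[0,T)$, and in particular $K := \exp\!\bigl(CT + C\int_0^T \rho(s)\,\dd s\bigr) < \infty$.

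Next I would apply the standard Gronwall inequality in the form valid for nondecreasing $f$ and nonnegative, integrable $g$: the inequality
\[
\rho(t) \le f(t) + \int_0^t g(s)\rho(s)\,\dd s
\]
implies
\[
\rho(t) \le f(t)\exp\!\left(\int_0^t g(s)\,\dd s\right).
\]
This version is obtained by the standard trick of setting $u(t) = \int_0^t g(s)\rho(s)\,\dd s$, noting $u'(t)\le g(t)(f(t)+u(t))$, and using the nondecreasing property of $f$ together with the integrating factor $\exp(-\int_0^t g)$; I would either cite the usual reference or reproduce this one-line argument.

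Applying this with our choice of $g$ and using $\int_0^t g(s)\,\dd s \le CT + C\int_0^T \rho(s)\,\dd s$ uniformly in $t\in[0,T)$, we obtain
\[
\rho(t) \le f(t)\,\exp\!\left(CT + C\int_0^T \rho(s)\,\dd s\right) = K f(t),
\]
which is precisely $\rho \lesssim f$ on $[0,T)$. The only subtlety — and the reason the a priori integrability of $\rho$ is needed in the statement — is that without it the coefficient $g$ in the Gronwall step could fail to be integrable and the exponential factor could blow up as $t\uparrow T$; the preceding bound $\rho(s) \le \overline M' (T-s)^{\eps_0-1}$ with $\eps_0>0$ derived earlier in the proof of Theorem \ref{thm:linearproc}(ii) is exactly what guarantees this integrability.
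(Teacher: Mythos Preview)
Your proof is correct and essentially identical to the paper's: both absorb the quadratic term into a linear Gronwall inequality by setting the kernel to $C(1+\rho(s))$, apply the Gronwall--Bellman lemma for nondecreasing $f$, and then use the assumed integrability of $\rho$ on $[0,T)$ to bound the exponential factor uniformly in $t$. Your remark about why integrability is needed matches the role it plays in the paper as well.
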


For the proof we need the following Gronwall--Bellman type lemma.
A proof can be found in \cite{Mitrinovi} (Chapter XII.3, Theorem 4). 
\begin{lemma}\label{gronwall}
Let $\rho(t)$ be continuous and nonnegative on $[0,\tau]$ and satisfy 
\[\rho(t) \le f(t) + \int_0^t h(s) \rho(s) \dd s, \quad t \in [0,\tau],\]
where  $h$ is a nonnegative integrable function on $[0,T)$ and  with $f$ nonnegative, nondecreasting and bounded on $[0,\tau]$. Then 
\[ \rho(\tau) \le f(\tau)  \exp\left(\int_0^T h(s) \dd s \right) .\]

\end{lemma}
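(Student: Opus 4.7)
The plan is to give the classical proof of Gronwall's inequality, which proceeds by bounding the right-hand side by a differential-inequality argument that uses the monotonicity of $f$ to replace it by the constant $f(\tau)$.

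First I would use the hypothesis that $f$ is nondecreasing: for every $t\in[0,\tau]$, $f(t)\le f(\tau)$, so the integral inequality becomes
\[
\rho(t)\le f(\tau)+\int_0^t h(s)\rho(s)\dd s.
\]
Then I would introduce the auxiliary function
\[
u(t)=f(\tau)+\int_0^t h(s)\rho(s)\dd s,
\]
which is absolutely continuous on $[0,\tau]$ (since $h$ is integrable and $\rho$ continuous hence bounded on $[0,\tau]$), with $u(0)=f(\tau)$ and $\rho(t)\le u(t)$. Differentiating gives $u'(t)=h(t)\rho(t)\le h(t)u(t)$ almost everywhere, because $h\ge 0$.

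The next step is the standard integrating-factor trick. Set $E(t)=\exp\bigl(-\int_0^t h(s)\dd s\bigr)$. Then $\bigl(E(t)u(t)\bigr)'=E(t)(u'(t)-h(t)u(t))\le 0$ a.e., so $t\mapsto E(t)u(t)$ is nonincreasing on $[0,\tau]$. Hence
\[
E(\tau)u(\tau)\le E(0)u(0)=f(\tau),
\]
which rearranges to
\[
u(\tau)\le f(\tau)\exp\!\left(\int_0^\tau h(s)\dd s\right)\le f(\tau)\exp\!\left(\int_0^T h(s)\dd s\right).
\]
Combining with $\rho(\tau)\le u(\tau)$ yields the claim. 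No step is genuinely difficult here; the only mild care is to verify that $u$ is absolutely continuous so that the fundamental theorem of calculus applies to $Eu$, which follows from integrability of $h$ and continuity of $\rho$ on $[0,\tau]$.
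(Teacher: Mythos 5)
Your proof is correct; it is the standard integrating-factor proof of the Gronwall--Bellman inequality, with the monotonicity of $f$ used exactly where it is needed (to replace $f(t)$ by the constant $f(\tau)$ before setting up the differential inequality), and the absolute-continuity justification you add is the right technical point. The paper itself does not supply a proof for this lemma but cites it to Mitrinovi\'c (Chapter XII.3, Theorem 4), so there is no in-paper argument to compare against; your write-up simply makes the cited classical result self-contained.
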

\begin{proof}[Proof of lemma \ref{lem:improve}]
Applying the  Gronwall--Bellman lemma with $h(s) = C(1 + \rho(s))$ gives that for any $\tau \in [0,T)$,
\[
\rho(\tau) \le   f(\tau)  \exp\left(\int_0^\tau h(s) \dd s\right) \le f(\tau)\exp\left(\int_0^T  C (1 + \rho(s)) \dd s\right). 
\]
The integral on the right-hand-side is finite. 
\end{proof}


\appendix
\section{Information projection and entropy method}\label{sec:appendix}
 
The following procedure to find the information projection is similar to the cross entropy method in rare event simulation. The algorithm proceeds by stochastic gradient descent to improve $\th$ using samples from proposals with a varying reference value for $\th$ (named $\th_n$ below), which is updated every $K$ steps.

\begin{algorithm}\label{alg:ce}\ 

{\bf Initialisation:} Choose a starting value for $\th$, let $n = 1$ and choose  decay weights $\alpha(n, k)$. 

{\bf Repeat} for $n = 1, 2, \dots$
\begin{enumerate}
\item {\bf Update $\th_n$.} Let $\th_n = \th$.
\item {\bf Sample proposals.}
 Sample $m = 1,\dots, M$ bridge proposals $X^{\circ(m)}$ with parameter $\th_n$.

\item {\bf Stochastic gradient descent.} 
 For $k = 1,  \dots, K$
\[\theta \leftarrow \theta - \alpha(n, k) \frac1M\sum_{m = 1}^{M} \frac{\dd P_\theta^\circ}{\dd P^\circ_{\theta_n}}(X^{\circ(m) }) \nabla_\theta \log \frac{\dd P^\circ_\theta}{\dd P^\star}(X^{\circ(m)}).    \]
\end{enumerate}
\end{algorithm}
If $M = 1$ and $K=1$ this an algorithm of stochastic gradient descent type and $\alpha_n = \alpha_0\frac{\gamma }{\gamma + n}$ would be a standard choice. But depending on the form of $\tilde b_\th$, the update in step $3$ might be computationally cheap in comparison with step $2$ and one would prefer to sample $M>1$ bridges in batches and do step $3$ for  $K>1$.

In figure \ref{fig:sinexample} we took starting value $\th = 0$, $\alpha_n =(10 +2n)^{-1}$ and $M=K=1$.

\bibliographystyle{harry}
\bibliography{lit}

   \end{document}